%% file: SHVI_Main.tex
\documentclass[11pt,reqno]{article}
\usepackage{epsfig,amssymb,latexsym,amsmath,pifont,multicol,mathtools,verbatim,amsthm,float,lipsum}
\usepackage{caption} 
\usepackage[utf8]{inputenc}
\usepackage[T1]{fontenc}

\usepackage[varg]{txfonts}
\let\mathbb=\varmathbb

\usepackage[sans]{dsfont}

\usepackage[left=2cm, right=2cm, top=2cm, bottom=2cm]{geometry}

\usepackage[%
cal=euler,
bb=fourier,
scr=zapfc,
]
{mathalfa}

\usepackage[font=small,labelfont=bf]{caption}
\usepackage{subfigure}
\usepackage{graphicx}
\graphicspath{{Figures/}} 
\usepackage{acronym}
\usepackage{latexsym}
\usepackage{paralist}
\usepackage{wasysym}
\usepackage{xspace}
\usepackage{framed}
\usepackage{palatino,pxfonts}
\usepackage{authblk}
\usepackage{enumitem}
\usepackage[numbers,sort&compress]{natbib}

\usepackage[dvipsnames,svgnames]{xcolor}
\colorlet{MyBlue}{DodgerBlue!75!Black}
\colorlet{MyGreen}{DarkGreen!95!Black}

\usepackage{hyperref}
\hypersetup{
colorlinks=true,
linktocpage=true,
pdfstartview=FitH,
breaklinks=true,
pdfpagemode=UseNone,
pageanchor=true,
pdfpagemode=UseOutlines,
plainpages=false,
bookmarksnumbered,
bookmarksopen=false,
bookmarksopenlevel=1,
hypertexnames=true,
pdfhighlight=/O,
urlcolor=MyBlue!60!black,linkcolor=MyBlue!70!black,citecolor=DarkGreen!70!black, 
pdftitle={},
pdfauthor={},
pdfsubject={},
pdfkeywords={},
pdfcreator={pdfLaTeX},
pdfproducer={LaTeX with hyperref}
}

\numberwithin{equation}{section}  
\usepackage[sort&compress,capitalize,nameinlink]{cleveref}
\crefname{example}{Ex.}{Exs.}

\crefrangeformat{equation}{\upshape(#3#1#4)\textendash(#5#2#6)}

\usepackage{mathtools}
\usepackage[para,online,flushleft]{threeparttable}
\usepackage{multirow}

\usepackage{physics}
\usepackage{float}
\newcommand{\eps}{\varepsilon}
\DeclareMathOperator*{\argmin}{argmin}
\DeclareMathOperator*{\argmax}{argmax}

\DeclareMathOperator{\zer}{zer}

\DeclareMathOperator{\indicator}{\iota}

\DeclareMathOperator{\dist}{dist}
\DeclareMathOperator{\dif}{d\!}

\DeclareMathOperator{\dom}{dom}

\DeclareMathOperator{\gr}{graph}

\DeclareMathOperator{\supp}{\mathtt{s}}
\DeclareMathOperator{\Var}{\mathsf{Var}}

\newcommand{\ca}{\mathtt{a}}

\DeclareMathOperator{\prox}{\textsf{prox}}

\renewcommand{\iff}{\Leftrightarrow}

\renewcommand{\emptyset}{\varnothing}
\newcommand{\eqdef}{\triangleq}

\newcommand{\scrA}{\mathcal{A}}
\newcommand{\scrB}{\mathcal{B}}
\newcommand{\scrC}{\mathcal{C}}
\newcommand{\scrD}{\mathcal{D}}
\newcommand{\scrE}{\mathcal{E}}
\newcommand{\scrF}{\mathcal{F}}

\newcommand{\scrH}{\mathcal{H}}

\newcommand{\scrK}{\mathcal{K}}
\newcommand{\scrL}{\mathcal{L}}

\newcommand{\scrN}{\mathcal{N}}

\newcommand{\scrS}{\mathcal{S}}

\newcommand{\scrU}{\mathcal{U}}

\newcommand{\scrX}{\mathcal{X}}
\newcommand{\scrY}{\mathcal{Y}}
\newcommand{\scrZ}{\mathcal{Z}}

\renewcommand{\Pr}{\mathbb{P}}
\newcommand{\Ex}{\mathbb{E}}

\newcommand{\filterF}{{\mathbb{F}}}
\newcommand{\filterH}{{\mathbb{H}}}

\newcommand{\1}{\mathbf{1}}

\newcommand{\R}{\mathbb{R}}

\newcommand{\Z}{\mathbb{Z}}
\newcommand{\N}{\mathbb{N}}

\DeclareMathOperator{\NC}{\mathsf{NC}}
\DeclareMathOperator{\TC}{\mathsf{TC}}

\newcommand{\distance}{\mathsf{d}}
\newcommand{\ball}{\mathbb{B}}

\newcommand{\opA}{\mathsf{A}}

\newcommand{\opM}{\mathsf{M}}

\newcommand{\opF}{{\mathsf{F}}}
\newcommand{\opV}{{\mathsf{V}}}

\DeclareMathOperator{\HVI}{HVI}

\DeclareMathOperator{\gap}{\mathsf{Gap}}

\usepackage[boxruled]{algorithm2e}
\theoremstyle{plain}
\newtheorem{theorem}{Theorem}

\newtheorem*{corollary*}{Corollary}
\newtheorem{lemma}[theorem]{Lemma}
\newtheorem{proposition}[theorem]{Proposition}

\theoremstyle{definition}
\newtheorem{definition}[theorem]{Definition}
\newtheorem*{definition*}{Definition}
\newtheorem*{problem*}{Problem}
\newtheorem{assumption}{Assumption}

\newcommand{\close}{\hfill{\footnotesize$\Diamond$}}

\theoremstyle{remark}
\newtheorem{remark}{Remark}
\newtheorem*{remark*}{Remark}
\newtheorem*{notation*}{Notational remark}
\newtheorem{example}{Example}

\numberwithin{theorem}{section}
\numberwithin{remark}{section}
\numberwithin{example}{section}

\DeclarePairedDelimiter{\inner}{\langle}{\rangle}

\title{Stochastic variance reduced extragradient methods for solving hierarchical variational inequalities}
\date{\today}

\author[1]{\small Pavel Dvurechensky}
\author[2]{\small Andrea Ebner} 
\author[2]{\small Johannes Carl Schnebel}
\author[3]{\small Shimrit Shtern}
\author[2]{\small Mathias Staudigl}

\affil[1]{\footnotesize Weierstrass Institute for Applied Analysis and Stochastics, Mohrenstr. 39, 10117 Berlin, Germany\\
(\href{mailto:Pavel.Dvurechensky@wias-berlin.de}{Pavel.Dvurechensky@wias-berlin.de})}
\affil[2]{\footnotesize Mannheim University, Department of Mathematics, B6 26, 68159 Mannheim, Germany\\
(\href{mailto:andrea.ebner@uni-mannheim.de}{andrea.ebner@uni-mannheim.de}, \href{mailto:johannes-carl.schnebel@uni-mannheim.de}{johannes-carl.schnebel@uni-mannheim.de}, \href{mailto:mathias.staudigl@uni-mannheim.de}{m.staudigl@uni-mannheim.de})}
\affil[3]{\footnotesize Faculty of Data and Decision Sciences, Technion - Israel Institute of Technology, Haifa, Israel\\
(\href{mailto:shimrits@technion.ac.il}{shimrits@technion.ac.il})}

\begin{document}

\maketitle

\begin{abstract}
We are concerned with optimization in a broad sense through the lens of solving variational inequalities (VIs) -- a class of problems that are so general that they cover as particular cases minimization of functions, saddle-point (minimax) problems, Nash equilibrium problems, and many others.  
The key challenges in our problem formulation are the two-level hierarchical structure and finite-sum representation of the smooth operators in each level.  For this setting, we are the first to prove convergence rates and complexity statements for variance-reduced stochastic algorithms approaching the solution of hierarchical VIs in Euclidean and Bregman setups. 
\end{abstract}

\section{Introduction}
\label{sec:Intro}
\input{Introduction}

\section{Euclidean Setup}
\label{sec:Euclid}
\input{Euclidian}

\section{Bregman Setup}
\label{sec:Breg}
\input{Bregman}

\section{Numerical Experiments}
\label{sec:Numerics}
\input{Numerics}

\newpage

\paragraph{Acknowledgements.} {This research benefited from the support of the FMJH Program PGMO. MST's research is supported by the Deutsche Forschungsgemeinschaft (DFG) - Projektnummer 556222748 "non-stationary hierarchical minimization". PD's research is supported by the Deutsche Forschungsgemeinschaft (DFG, German Research Foundation) under Germany's Excellence Strategy – The Berlin Mathematics Research Center MATH+ (EXC-2046/1, project ID: 390685689). }

\appendix


\section{Tools}
\label{app:Tools}
\input{App_General}

\section{Facts on variational inequalities}
\label{app:VIs}
\input{App_Fitzpatrick}

\section{Proofs of Section \ref{sec:Euclid}}
\label{app:ProofsEuclid}
\input{Proofs_Euclid}

\section{Proofs of Section \ref{sec:Breg}}
\label{app:ProofsBreg}
\input{Proofs_Breg}


\section{Detailed Numerical Experiments}
\label{app:Num}
\input{app_Numerics}

\bibliographystyle{plain}
\bibliography{PenaltyDynamics}
\end{document}

%% file: Introduction.tex
%
 
 Hierarchical optimization is an increasingly active research area with many applications in machine learning, see, e.g.,  \cite{franceschi18a,ji21c,vicol22a,bao21neurips,chen24a}. At the same time, the algorithmic solution of hierarchical variational inequalities (VIs) is a much less studied field, despite its applications going far beyond ML, including optimal control and mechanics \citep{Barbu84}, network/traffic and economic equilibrium modeling \citep{facchineiPang03,outrata04epec}, and noncooperative game-theoretic equilibrium selection \citep{samadiYousefian23,matsuo25}.
Our main goal is to advance algorithmic and complexity understanding of hierarchical VIs in the big data setting when the involved operators admit a finite sum representation. Particularly, we show that variance reduction techniques also work in this complex context and lead to an algorithm that has improved sample complexity compared to the state-of-the-art.

For the mathematical statement of the problem, let $\scrX$ be a finite dimensional real vector space with inner product $\inner{\cdot,\cdot}$ and corresponding norm $\norm{\cdot}$. We solve the following hierarchical VI
\begin{equation}\label{eq:P}\tag{P}
\text{ Find }x^{\ast}\in\scrS_{2} \text{ s.t. } \inner{\opF_{1}(x^{\ast}),x-x^{\ast}}+g_{1}(x)-g_{1}(x^{\ast})\geq 0 \quad\forall x\in \scrS_{2}\eqdef \zer(\opF_{2}+\partial g_{2}). 
\end{equation}
The data of this problem consist of monotone operators $F_{i}:\scrX\to\scrX,i=1,2$ and proper convex lower semi-continuous functions $g_{i}:\scrX\to(-\infty,\infty]$. Before we proceed, we give several motivating examples. We also refer to Section \ref{sec:Numerics} where we provide numerical tests of our algorithms.


\begin{example}[equilibrium selection] 
An important class of problems falling in the domain of \eqref{eq:P} is the equilibrium selection problem 
$$
\min_{x} f_{1}(x) +g_{1}(x) \quad\text{s.t. }x\in \textnormal{SOL}(\opF_{2},\scrK) 
$$
where $f_{1}:\scrX\to\R$ is Lipschitz smooth and $g_1:\scrX\to(-\infty,\infty]$ is a proper closed convex function, and $\scrK\subseteq\scrX$ a nonempty closed convex set. In this formulation, the composite convex function in the upper level defines a design criterion, used to select among the solution set of a variational inequality problem 
\[
\text{ Find }y\in\scrK\text{ s.t. : } \inner{\opF_{2}(y),x-y}\geq 0\qquad\forall x\in\scrK.
\]
In this case, finite-sum representation on the lower level naturally appears when this VI comes from matrix games or constrained optimization reformulated as a saddle-point problem \citep{alacaoglu2022stochastic,nesterovNemirovski13acta}. 
Stochastic versions of the equilibrium selection problem have only been studied so far in \cite{jalilzadeh2024stochastic}. They develop a stochastic approximation algorithm based on the extragradient method, and show an $O(1/\eps^{4})$ iteration complexity at both levels. This rate is inferior to our scheme, which can achieve $O(1/\eps^{3})$ iteration complexity. 
If $\opF_{2}=\nabla f_{2}$, for a convex smooth and real-valued function $f_{2}$, we recover the simple bilevel optimization problem \citep{Solodov:2007aa,Dempe:2021aa}. As far as we know, the only analysis of stochastic versions of the simple bilevel optimization problem is reported in \cite{cao2023projection}, where a stochastic Frank-Wolfe method is shown to display an $O(1/\eps)$ iteration complexity. 
\end{example}

\begin{example}[Hierarchical games]
Hierarchical games are a useful model template for computational approaches to competitive mechanism design and certain dynamic games. The key goal of mechanism design is to implement a desired Nash equilibrium in a population of agents. Hence, the designers need to anticipate the equilibrium conditions of the lower level problem. For concreteness, we consider the situation in which two upper level players manage the strategies of $N$ lower-level players. The lower level players engage in a Nash game, characterized by local minimisation problems 
$$
 \min_{y^{\nu}\in\scrY^{\nu}}\{h^{\ell}_{\nu}(y^{\nu},y^{-\nu})+\varphi^{\ell}_{\nu}(y^{\nu})\}.
$$
Under standard differentiability and monotonicity assumptions on the data of the game, we can characterize equilibria of this lower level game in terms the variational inequality $\scrS_{2}=\zer(\opF_{2}+\partial g_{2})$, where $\opF_{2}(y)=(\nabla_{y^{1}}h^{\ell}_{1}(y),\ldots,\nabla_{y^{N}}h^{\ell}_{N}(y))$. The problem of the upper level players is to select a Nash equilibrium in the lower level game by itself being engaged in a strategic optimisation problem of the form 
$$
\min_{x^{\mu}\in\scrX^{mu}}\{h^{u}_{\mu}(x^{\mu},x^{-\mu})+\varphi^{u}_{\mu}(x^{\mu})\}\quad \text{s.t.: }(x^{\mu},x^{-\mu})\in\scrS_{2},\mu\in\{1,2\}. 
$$
The strategy $x^{\mu}=(y^{\nu})_{\nu\in\scrN_{\mu}}$ consists of the actions of the adjunct lower level players. Writing the equilibrium conditions for the resulting hierarchical games leads to the problem formulation \eqref{eq:P}. 
\end{example}


\subsection{Related works}

\paragraph{Variational inequalities}
VIs provide a versatile mathematical model for numerically approaching important classes of equilibrium problems in game theory, control, and learning theory. A very popular choice for solving VIs has been the extragradient method (EG) \citep{korpelevich76}. For single-level VIs with Lipschitz operator, the standard convergence results for these algorithms display complexity $O(\eps^{-1})$ for monotone problems, and linear rates of convergence for strongly monotone problems. Complexity statements are usually made in terms of ergodic averages and well-defined gap functions. Generalizations for stochastic problems with infinite expectations were proposed in \cite{juditskyNemirovskiTauvel11,kotsalisLanLi20}. The complexity of such problems contains an extra term $O(\eps^{-2})$ in the monotone setting and $O(\eps^{-1})$ in the strongly monotone setting due to the variance of stochastic realizations of the operator, and this can not be improved without changing the problem class. For the setting of finite expectation, i.e., when the operator is given as a sum of finitely many elements, variance reduction methods allow to improve the rates back to $O(\eps^{-1})$ complexity for monotone problems and linear rate for strongly monotone settings \citep{balamurugan2016stochastic,iusemJofreOliveiraThompson17,cuiShanbhag19,alacaoglu2022stochastic,pmlr-v151-gorbunov22b}. For further references, we refer to a recent survey \cite{beznosikov2023smooth}. 




\paragraph{Hierarchical VIs}
Despite bilevel optimization and equilibrium selection being well-understood (see, e.g., recent survey \cite{beckLjubicSchmidt23}), we are aware of only a few papers on numerical methods for solving hierarchical VIs \cite{van2021regularization,ThongNumerical20,lampariello2022solution}. Among those, only \cite{lampariello2022solution} contains complexity statements. Recently, the papers \cite{samadi_improved_2025,alves2025inertial,dvurechensky2025extragradient} have addressed deterministic hierarchical VIs and proposed generalizations of EG for this setting with the rates $O(1/k^{\delta})$ (lower level) and $O(1/k^{1-\delta})$ (upper level), where $\delta \in (0,1)$ and $k$ is the iteration counter.  \cite{khalafi2025regularized} addresses the full stochastic setting where the operators are given in terms of mathematical expectations. They obtains worse complexities due to the variance in the stochastic oracle.

\subsection{Outline of results and comparison} 
To the best of our knowledge, we propose the first set of variance reduction algorithms for hierarchical VIs in the finite expectation setting. Namely, we propose a simple algorithm for the Euclidean setup and another algorithm for the general Bregman setting, both achieving the same convergence rates $O(1/k^{\delta})$ (lower level) and $O(1/k^{1-2\delta})$ (upper level), i.e., we nearly achieve the deterministic rates showing that in this challenging context of hierarchical VIs variance reduction is a powerful technique. An important ingredient of our analysis is showing that, thanks to a special geometric Attouch-Czarnecki condition \citep{Attouch:2010aa} on the lower-level solution set, the iterates of the algorithm are almost surely bounded. This allows us to drop the assumption of a compact domain of $g_1$ and/or $g_2$ used in previous works on hierarchical VIs.


%% file: Euclidian.tex
%

We derive the main structural results for resolving the hierarchical equilibrium problem, picking the extragradient method, a popular algorithmic template, with numerous applications in AI and ML, particularly in the context of GAN training \citep{chavdarova2019reducing,hsieh2019convergence}. 
\subsection{Preliminaries}  

Let $\scrX$ be a finite dimensional vector space with Euclidean inner produce $\inner{\cdot,\cdot}$ and induced norm $\norm{\cdot}$. The proximal operator is defined as 
$\prox_{g}(x)=\argmin\{g(y)+\frac{1}{2}\norm{y-x}^{2}\}$. We have
\begin{equation}\label{eq:prox}
\bar{z}=\prox_{g}(z)\iff \inner{\bar{z}-z,u-\bar{z}}\geq g(z)-g(\bar{z})\qquad\forall u\in\scrX.
\end{equation}

\begin{assumption}\label{ass:standing} 
The following hypothesis are assumed to hold throughout the paper:
\begin{itemize}
\item[(i)] The solution set $\scrS_{1}$ of \eqref{eq:P} is nonempty. In particular, $\scrS_{2}\eqdef \zer(\opF_{2}+\partial g_{2})\neq\emptyset$.
\item[(ii)] The functions $g_{i}:\scrX\to(-\infty,\infty], i=1,2$ are proper convex and lower semi-continuous.
\item[(iii)] The operators $\opF_{i},i=1,2,$ are monotone and $L_{\opF_{i}}$-Lipschtiz.
\item[(iv)] The operators $\opF_{i}$ admit a finite sum representation $\opF_{i}=\sum_{\ca\in\scrA}\opF_{i}^{\ca}$, where $\scrA$ is a finite index set, and $\opF_{i}^{\ca}:\scrX\to\scrX$ are given mappings.
\end{itemize}
\end{assumption}
We remark that Assumption \ref{ass:standing} implies that $\scrS_{2}$ is a non-empty, closed, and convex set \citep{BauCom16}.
\begin{remark} 
The assumption that both operators $\opF_{1},\opF_{2}$ can be represented as a finite sum with indices coming from a common index set $\scrA$ is without loss of generality. Indeed, if we would have the representation $\opF_{i}=\sum_{\alpha\in\scrA_{i}}\opF_{i}^{\ca}$, then define $\scrA=\scrA_{1}\cup\scrA_{2}$, and declare for example $\ca\in\scrA_{i}\setminus \scrA_{j}$ the operator $\opF_{j}^{\ca}=0$. Alternatively, we could label the mappings with a double index $\ca=(\ca_{1},\ca_{2})\in\scrA_{1}\times\scrA_{2}\equiv\scrA$. 
\end{remark}
We also assume the non-smooth part $g_{1}$ of the upper-level VI to satisfy a finite variation property, where we define the variation  over the set $\scrB\times\scrC\subseteq\scrZ\times\scrZ$ as
$\Var(g_{1}\vert \scrB\times\scrC)\eqdef \sup_{(x,y)\in\scrB\times\scrC}\abs{g_{1}(x)-g_{1}(y)}.$
\begin{assumption}
\label{ass:BVg}
The set $\dom(g_1)\cap \dom(g_2)$ is closed, and for any compact sets $\scrB,\scrC\subset\scrZ$ satisfying $\scrB,\scrC \subset\dom(g_{1})\cap\dom(g_{2})$, 
we have $\Var(g_{1}\vert \scrB\times\scrC)<\infty$.
\end{assumption}

\begin{assumption}
\label{ass:CQ}
The solution set $\scrS_{1}$ of \eqref{eq:P} satisfies
$
\scrS_{1}=\zer(\opF_{1}+\partial g_{1}+\NC_{\scrS_{2}}).
$
\end{assumption}
Similar assumptions have been made in \cite{Bot:2014aa} for solving constrained variational inequalities. It is essentially a constraint qualification assumption, and a rather mild condition. 

Define the mappings $\opV_{k}(x)\eqdef \beta_{k}\opF_{1}(x)+\opF_{2}(x)$ and $G_{k}(x)\eqdef \beta_{k}g_{1}(x)+g_{2}(x)$. The combined operator has again a finite sum structure $\opV_{k}=\sum_{\ca\in\scrA}\opV^{\ca}_{k}$. Moreover, each $\opV_{k}$ is monotone and Lipschitz. Let $(\Omega,\scrF,\Pr)$ be a probability space and $\xi:\Omega\to\Xi$ a random variable. In order to realize our stochastic algorithm, we assume to have access to a stochastic oracle $\opV_{k}^{\xi}$, returning noisy feedback information on the true mapping $\opV_{k}$. 

\begin{assumption}\label{ass:Oracle}
For all $k\in\N$, the operator $\opV_{k}$ admits a stochastic oracle $\opV^{\xi}_{k}$ such that $\Ex[\opV_{k}^{\xi}(x)]=\opV_{k}(x)$ for all $x\in\scrX$. There exists $\scrL_{k}>0$ such that $\sup_{k\geq 0}\scrL_{k}\leq L_{1}$ and 
    \begin{equation}\label{eq:LipV}
    \Ex \left[\, \norm{\opV^{\xi}_k(x) - \opV
    _k^{\xi}(y)}\right] \leq \scrL_k  \norm{x-y}\qquad \forall x,y \in \scrX.
    \end{equation}
  \end{assumption}
\begin{example}
\label{ex:sampling_prob}
Working in a finite sum setting, we can construct different stochastic oracles. Since $\opV_{k}=\sum_{\ca\in\scrA}\opV^{\ca}_{k}$, if each $\opV^{\ca}_{k}$ is $L_{k,\ca}$-Lipschitz, then the triangle inequality gives $L_{\opV_{k}}\leq\sum_{\ca\in\scrA}L_{k,\ca}$. The two simplest stochastic oracles can be defined as follows: 
\begin{enumerate}
    \item $\opV^{\ca}_{k}(x)=\abs{\scrA}\opV^{\ca}_{k}(x)$ and $\xi:\Omega\to\scrA$ is the uniformly distributed random variable with law $Q_{\ca}\eqdef\Pr(\xi=\ca)=\frac{1}{\abs{\scrA}}$. In this case $\scrL_{k}=\sqrt{\abs{\scrA}\sum_{\ca\in\scrA}L_{k,a}^{2}}$. Since $\beta_{k}\leq \beta_{1}$, we can take as upper bound on the Lipschitz modulus $L_{1}=\scrL_{1}$.
    \item Let $\xi:\Omega\to\scrA$ be the random variable with law $Q_{\ca}\eqdef\Pr(\xi=\ca)=\frac{L_{1,\ca}}{\sum_{\ca\in\scrA}L_{1,\ca}}$. Set $\opV^{\xi}_{k}(x)=\frac{1}{q_{a}}\opV^{\ca}_{k}$ on the event $\{\xi=\ca\}$. In this case, we have $\scrL_{k}=\sum_{\ca\in\scrA}L_{1,\ca}=L_{1}.$ 
\end{enumerate}
\end{example}
%


 \paragraph{Gap functions for hierarchical VI's.} Gap functions are a common tool to measure the suboptimality of a given test point when solving a variational inequality. For single-level VIs, a very common formulation of a gap function is given by 
\begin{equation}\label{eq:HFG_def}
 \Theta(x\vert \opF,g,\scrU)=\sup_{y\in\scrU}\left\{H^{(\opF,g)}(x,y)\eqdef\inner{\opF(y),x-y}+g(x)-g(y)\right\},
\end{equation}
where $\scrU$ is a compact subset of $\scrX$, used to deal with the possibility of unboundedness of $\dom(g)$ \citep{solodov2003merit,NesDual07}. Since our aim is to solve hierarchical variational systems, we have to define merit functions for the upper and lower level.

\begin{definition}
The feasibility gap and optimality gap for \eqref{eq:P} are defined as 
\begin{align}
&\Theta_{\rm Feas}(x\vert\scrC)=\sup_{y\in\scrC} H^{(\opF_{2},g_{2})}(x,y) \qquad \text{ and } \qquad \Theta_{\rm Opt}(x\vert\scrC\cap\scrS_{2})=\sup_{y\in\scrC\cap\scrS_{2}} H^{(\opF_{1},g_{1})}(x,y).\label{eq:Opt}
\end{align}
\end{definition}
To obtain two-sided bounds on gap functions, we invoke a sharpness condition of the lower-level solution set of \eqref{eq:P}.
\begin{definition}[Weak Sharpness]
\label{def:WSMain}
Let $\scrS$ be the solution set of $\HVI(\opF,g)$. We say $\scrS$ is $(\kappa,\rho)$-weak sharp with $\kappa>0$ and $\rho\in(1,2)$ if
\begin{equation}\label{eq:WSDef}
(\forall z^{\ast}\in\scrS)(\forall z\in\dom(g)):\quad H^{(\opF,g)}(z,z^{\ast})\geq\kappa\rho^{-1}\dist(z,\scrS)^{\rho}.
\end{equation}
\end{definition}
Weak sharpness is a common assumption in hierarchical optimization \citep{Cabot2005,chen2024penalty,boct2025accelerating}, and hierarchical equilibrium problems alike \citep{samadi_improved_2025}. Similar conditions have been used in an SVRG analysis of extragradient methods for single-level problems in \cite{nan2023convergence}. Combining this geometric condition, with the optimality measures $\Theta_{\rm Opt}$ and $\Theta_{\rm Feas}$, we are able to establish a-priori bounds on the gap functions involved in measuring the quality of test points $x$. The proof is given in Appendix \ref{app:VIs}.
  
\begin{lemma}\label{lem:LB}
Consider problem \eqref{eq:P}. Let Assumption \ref{ass:standing} and \ref{ass:CQ} hold. Let $\scrU_1\subseteq\dom(g_{1})$ be a nonempty compact set with $\scrU_1\cap\scrS_{1}\neq\emptyset$. Then, there exists a constant $B_{\scrU_1}>0$ such that
\begin{equation}\label{eq:LB1}
\Theta_{\rm Opt}(x\vert\scrU_1\cap\scrS_1)\geq-B_{\scrU_1}\dist(x,\scrS_{2}) \quad\forall x\in\scrX.
\end{equation}
Suppose $\scrS_{2}$ is $(\kappa,\rho)$-weakly sharp. Then for all nonempty and compact subsets $\scrU_2\subseteq\dom(g_{2})$ with $\scrU_{2}\cap\scrS_{2}\neq\emptyset$, and all $x\in\dom(g_{2})$, we have 
\begin{equation}\label{eq:WS}
\dist(x,\scrS_{2})\leq\left[\rho\kappa^{-1}\Theta_{\rm Feas}(x\vert \scrU_2\cap \scrS_2)\right]^{1/\rho}.
\end{equation}
\end{lemma}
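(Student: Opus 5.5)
The two claims are independent, so I will prove them separately. The first uses Assumption~\ref{ass:CQ} to produce a multiplier from the normal cone of $\scrS_{2}$. The second is essentially immediate from the definition of weak sharpness.

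\emph{Lower bound \eqref{eq:LB1}.} Fix $x^{\ast}\in\scrU_1\cap\scrS_{1}$. By Assumption~\ref{ass:CQ}, $0\in\opF_{1}(x^{\ast})+\partial g_{1}(x^{\ast})+\NC_{\scrS_{2}}(x^{\ast})$. Hence there exist $\xi\in\partial g_{1}(x^{\ast})$ and $p\in\NC_{\scrS_{2}}(x^{\ast})$ with $\opF_{1}(x^{\ast})+\xi+p=0$. Since $x^{\ast}\in\scrU_1\cap\scrS_1$, we have
\[
\Theta_{\rm Opt}(x\vert\scrU_1\cap\scrS_1)\geq H^{(\opF_1,g_1)}(x,x^{\ast})=\inner{\opF_1(x^{\ast}),x-x^{\ast}}+g_1(x)-g_1(x^{\ast}).
\]
The subgradient inequality gives $g_1(x)-g_1(x^{\ast})\geq\inner{\xi,x-x^{\ast}}$, so the right-hand side is at least $-\inner{p,x-x^{\ast}}$. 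Let $\bar x=P_{\scrS_2}(x)$; this projection exists because $\scrS_2$ is nonempty, closed and convex. Since $p\in\NC_{\scrS_2}(x^{\ast})$, we have $\inner{p,\bar x-x^{\ast}}\leq 0$. Therefore
\[
-\inner{p,x-x^{\ast}}\geq-\inner{p,x-\bar x}\geq-\norm{p}\dist(x,\scrS_2),
\]
and we may take $B_{\scrU_1}=\max\{\norm{p},1\}>0$.

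One point needs care. If $x\notin\dom(g_1)$, then $g_1(x)=+\infty$ and the bound holds trivially. The subgradient inequality remains valid in that case, so no separate argument is required. The main subtlety overall is that the constant depends only on the single chosen $x^{\ast}$ and its multiplier, which is finite. Compactness of $\scrU_1$ is therefore needed only to make the gap well defined and the set $\scrU_1\cap\scrS_1$ nonempty. If one wants a uniform constant, one can take $\inf$ over the choices of $x^{\ast}$, but a single choice suffices.

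\emph{Upper bound \eqref{eq:WS}.} Take $x\in\dom(g_2)$ and any $z^{\ast}\in\scrU_2\cap\scrS_2$; such a point exists by assumption. Weak sharpness, Definition~\ref{def:WSMain}, with $(\opF,g)=(\opF_2,g_2)$, gives
\[
\kappa\rho^{-1}\dist(x,\scrS_2)^{\rho}\leq H^{(\opF_2,g_2)}(x,z^{\ast})\leq\sup_{y\in\scrU_2\cap\scrS_2}H^{(\opF_2,g_2)}(x,y)=\Theta_{\rm Feas}(x\vert\scrU_2\cap\scrS_2).
\]
The right-hand side is therefore nonnegative. Rearranging and taking the $1/\rho$-th power, which is monotone on $[0,\infty)$, yields \eqref{eq:WS}.

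I expect no real obstacle here. The only step requiring Assumption~\ref{ass:CQ} is the extraction of the normal-cone multiplier $p$ in the first part; without it, the optimality condition of \eqref{eq:P} would not guarantee a decomposition with $p\in\NC_{\scrS_2}(x^{\ast})$.
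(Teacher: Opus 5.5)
Your proposal is correct and follows the paper's proof essentially verbatim: you extract the multiplier $p^\ast\in\NC_{\scrS_2}(x^\ast)$ from Assumption~\ref{ass:CQ}, combine the subgradient inequality with the projection $\Pi_{\scrS_2}(x)$ to get the bound with constant $\norm{p^\ast}$, and read off \eqref{eq:WS} directly from weak sharpness. Your choice $B_{\scrU_1}=\max\{\norm{p},1\}$ improves slightly on the paper's $B_{\scrU_1}=\norm{p^\ast}$, since it guarantees the strict positivity claimed in the statement even when $p^\ast=0$.
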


\subsection{Hierarchical extragradient with variance reduction}

\begin{algorithm}[h]
\SetAlgoLined
\KwData{$\theta\in(0,1]$, probability distribution $Q$, step size $(\tau_{k})_{k},\alpha\in(0,1)$, regularization sequence $(\beta_{k})_{k\in\N}$, $x^{0}=w^{0}$}

\For{$k=0,1,\ldots$}{
 Sample $\xi_{k}\sim Q$.\;
Perform the updates
\begin{align*}
&z_{k}=\alpha x_{k}+(1-\alpha)w_{k}\\ 
&y_{k+1}=\prox_{\tau_{k}G_{k}}(z_{k}-\tau_{k}\opV_{k}(w_{k}))\\
&x_{k+1}=\prox_{\tau_{k}G_{k}}(z_{k}-\tau_{k}(\opV_{k}(w_{k})+\opV^{\xi_{k}}_{k}(y_{k+1})-\opV^{\xi_{k}}_{k}(w_{k}))),\\
&w_{k+1}=\left\{\begin{array}{ll} x_{k+1} & \text{ with probability }\theta \\ 
w_{k} & \text{ with probability }1-\theta.
\end{array}\right.
\end{align*}
}
\caption{Hierarchical extragradient with variance reduction}
\label{alg:Euclid}
\end{algorithm}

The main result of this paper are two complexity statements for the averaged iterates generated by Algorithm \ref{alg:Euclid} in terms of the gap functions \eqref{eq:Opt}. We work in a specific geometric setting in which we assume some structure on the solution set of the lower level problem. Associated to the bifunction $H^{(\opF_{2},g_{2})}$, Appendix \ref{app:VIs} introduces the mapping 
\begin{equation}\label{eq:phimaintext}
\varphi^{(\opF_{2},g_{2})}(x,u)\eqdef \sup_{y\in\dom(g_{2})} \{H^{(\opF_{2},g_{2})}(x,y)+\inner{y,u}\}.
\end{equation}
This function encodes dual properties of the variational inequality, since by \eqref{eq:dualphi} 
\begin{equation}\label{eq:dualphimaintext}
\varphi^{(\opF_{2},g_{2})}(x,u)\leq\sup_{y\in\dom(g_{2})}\{\inner{y,u}-H^{(\opF_{2},g_{2})}(y,x)\}=\left(H^{(\opF_{2},g_{2})}(\bullet,x)\right)^{\ast}(u). 
    \end{equation}
 The following summability condition is essentially due to \cite{Attouch:2010aa}:
\begin{assumption}\label{ass:ACcondition}[Attouch-Czarnecki condition]
The step size sequence $(\tau_{k})_{k\geq 0}$ and the regularization sequence $(\beta_{k})_{k\geq 0}$ satisfy
\begin{equation}\label{eq:AttCza}
(\forall p\in\textnormal{Range}(\NC_{\scrS_{2}})):\;\sum_{k=1}^{\infty}\tau_{k}\left[\sup_{z\in\scrS_{2}}\varphi^{(\opF_{2},g_{2})}(z,\beta_{k}p)-\supp(\beta_{k}p\vert\scrS_{2})\right]<\infty.
\end{equation}
\end{assumption}
    To understand the meaning of Assumption \ref{ass:ACcondition}, it is instructive to specialize our setting to the simple bilevel optimization case. In that situation, the data of the lower level problem are identified with $\opF_{2}=\nabla f_{2}$ and $\scrS_{2}=\argmin_{z}(f_{2}+g_{2})(z)$, and thus 
    \begin{align*}
            H^{(\opF_{2},g_{2})}(y,z)&\leq f_{2}(y)-f_{2}(z)+g_{2}(y)-g_{2}(z)\eqdef \hat{f}_{2}(y)-\hat{f}_{2}(z)\\
       &=(\hat{f}_{2}-\min\hat{f}_{2})(y)-(\hat{f}_{2}-\min\hat{f}_{2})(z).
    \end{align*}
Hence, $\varphi^{(\opF_{2},g_{2})}(z,p)\leq (\hat{f}_{2}-\min\hat{f}_{2})(z)+(\hat{f}_{2}-\min\hat{f}_{2})^{*}(p)$. Since for $z\in\scrS_{2}$ it holds $(\hat{f}_{2}-\min\hat{f}_{2})(z)=0$, this further implies 
$\sup_{z\in\scrS_{2}}\varphi^{(\opF_{2},g_{2})}(z,\beta_{k}p)-\supp(\beta_{k}p\vert\scrS_{2})\leq (\hat{f}_{2}-\min\hat{f}_{2})^{*}(\beta_{k}p)-\supp(\beta_{k}p\vert\scrS_{2}). 
$
In \cite{AttCzarPey11,Peypouquet:2012aa,boct2025accelerating} the summability condition 
$$
\sum_{k=1}^{\infty}\tau_{k}\left[(\hat{f}_{2}-\min\hat{f}_{2})^{*}(\beta_{k}p)-\supp(\beta_{k}p\vert\scrS_{2})\right]<\infty
$$
is imposed. Clearly, this condition is more restrictive than our condition \eqref{eq:AttCza}.

\begin{remark}
\label{rem:Tikhonov}
Assumption \ref{ass:ACcondition} looks quite daunting to verify, but as already observed in \cite{boct2025accelerating}, it fits very natural to the geometric setting of this paper. Indeed, let us assume that $\scrS_{2}$ is $(\kappa,\rho)$-weakly sharp, with $\kappa>0$ and $\rho\in(1,2)$. According to Definition \ref{def:WSMain}, for all $x\in\dom(g_{2})$ and for all $x^{*}\in\scrS_{2}$, we have $H^{(\opF_{2},g_{2})}(x,x^{*})\geq \frac{\kappa}{\rho}\dist(x,\scrS_{2})^{\rho}$. Hence, eq. \eqref{eq:dualphimaintext} yields for all $x\in\scrS_{2}$
\begin{align*}
\varphi^{(\opF_{2},g_{2})}(x,\beta_{k}p^{\ast})-\supp(\beta_{k}p^{\ast}\vert\scrS_{2})&\leq \left(H^{(\opF_{2},g_{2})}(\bullet,x)\right)^{\ast}(\beta_{k}p^{\ast})-\supp(\beta_{k}p^{\ast}\vert\scrS_{2})\leq \kappa^{-\frac{1}{\rho-1}}\left(\frac{\rho-1}{\rho}\right)\beta_{k}^{\frac{\rho}{\rho-1}}\norm{p^{\ast}}^{\frac{\rho}{\rho-1}}. 
\end{align*}
This shows that under the $(\kappa,\rho)$-weak sharpness condition, the summability condition \eqref{eq:AttCza} is satisfied whenever $\sum_{k\geq 1}\tau_{k}\beta^{\frac{\rho}{\rho-1}}_{k}<\infty.$ 
\close
\end{remark}

\subsection{Analysis}

To proceed with the analysis, we define the filtration $\filterH\eqdef (\scrH_{k})_{k\in\N}$ and $\filterF\eqdef(\scrF_{k})_{k\in\N}$ by 
\begin{align*}   
&\scrH_{k}\eqdef\sigma(x^{0},\xi_{0},\ldots,\xi_{k-1},w_{k}),\\
&\scrF_{k}\eqdef \sigma(x^{0},\xi_{0},\ldots,\xi_{k},w_{k}).
\end{align*}
Note that $\scrH_{k}\subset\scrF_{k}$ for all $k\in\N$. Additionally, $y_{k+1}$ is measurable with respect to $\scrH_{k}$, and $x_{k+1}$ is measurable with respect to $\scrF_{k}$.\\
To simplify the notation, we define the random field $\opA_{k+1}:\Omega\to\scrX$ by
\begin{equation}\label{eq:A}
\opA_{k+1}(\omega)\eqdef \opV_{k}(w_{k}(\omega))+\opV_{k}^{\xi_{k}}(y_{k+1}(\omega))-\opV^{\xi_{k}}_{k}(w_{k}(\omega)).
\end{equation}
By Assumption \ref{ass:Oracle}, it follows 
$
\Ex[\opA_{k+1}\vert \scrH_{k}]=\opV_{k}(y_{k+1})\;\Pr-\text{a.s.} 
$
For the iterates $(x_{k})_{k},(y_{k})_{k},(w_{k})_{k}$ of Algorithm \ref{alg:Euclid}, and any $x\in\dom(g_{1})\cap\dom(g_{2})$, we define 
\begin{align}
&\scrE_{k}(x)\eqdef\alpha\norm{x_{k}-x}^{2}+\frac{1-\alpha}{\theta}\norm{w_{k}-x}^{2}, \label{eq:E}\\ 
&\Psi_{k}(x)\eqdef \inner{\opV_{k}(y_{k+1}),y_{k+1}-x}+G_{k}(y_{k+1})-G_{k}(x) \label{eq:Psi}
\end{align}
\begin{lemma}\label{lem:energy}
Let Assumptions \ref{ass:standing}- \ref{ass:Oracle} hold. Given $\alpha\in(0,1],\theta\in (0,1]$ and $\tau_{k}\leq \frac{\sqrt{1-\alpha}}{\scrL_{k}}\gamma$, for $\gamma\in(0,1)$. For any $x\in\dom(g_{1})\cap\dom(g_{2})$, the iterates produced by Algorithm \ref{alg:Euclid} satisfy 
\begin{equation}\label{eq:Energy2}
\begin{split}
\Ex[\scrE_{k+1}(x)\vert\scrH_{k}]+2\tau_{k}\Psi_{k}(x)&\leq \scrE_{k}(x)-\alpha\norm{y_{k+1}-x_{k}}^{2}\\ 
&-(1-\gamma)\left[(1-\alpha)\norm{y_{k+1}-w_{k}}^{2}+\Ex\left(\norm{x_{k+1}-y_{k+1}}^{2}\vert\scrH_{k}\right)\right].
\end{split}
\end{equation}
\end{lemma}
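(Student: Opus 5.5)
This is the standard Alacaoglu--Malitsky loopless variance-reduced extragradient argument, with $\opV_k$, $G_k$ playing the role of the single-level operator and regularizer. I would derive two prox inequalities from \eqref{eq:prox}, add them, take conditional expectations with respect to $\scrH_k$, and control the variance term with \eqref{eq:LipV}.

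\textbf{Step 1: the $x_{k+1}$ update.} Applying \eqref{eq:prox} to $x_{k+1}$ with $g=\tau_kG_k$ and test point $x$ gives
\[
\inner{x_{k+1}-z_k+\tau_k\opA_{k+1},x-x_{k+1}}\geq \tau_k\bigl(G_k(x_{k+1})-G_k(x)\bigr).
\]

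\textbf{Step 2: the $y_{k+1}$ update.} Applying \eqref{eq:prox} to $y_{k+1}$ with test point $u=x_{k+1}$ gives
\[
\inner{y_{k+1}-z_k+\tau_k\opV_k(w_k),x_{k+1}-y_{k+1}}\geq \tau_k\bigl(G_k(y_{k+1})-G_k(x_{k+1})\bigr).
\]

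\textbf{Step 3: combine.} Adding the two inequalities cancels $G_k(x_{k+1})$, so the right-hand side becomes $\tau_k(G_k(y_{k+1})-G_k(x))$. I then split $\inner{\tau_k\opA_{k+1},x-x_{k+1}}$ as
\[
\inner{\tau_k\opA_{k+1},x-y_{k+1}}+\inner{\tau_k(\opA_{k+1}-\opV_k(w_k)),y_{k+1}-x_{k+1}}.
\]
The inner-product terms in $z_k$ are handled with the three-point identity $2\inner{a-b,c-a}=\norm{b-c}^2-\norm{a-b}^2-\norm{a-c}^2$. Using $z_k=\alpha x_k+(1-\alpha)w_k$ and the convex-combination identity
\[
\norm{z_k-u}^2=\alpha\norm{x_k-u}^2+(1-\alpha)\norm{w_k-u}^2-\alpha(1-\alpha)\norm{x_k-w_k}^2,
\]
one obtains the decomposition
\[
2\inner{x_{k+1}-z_k,x-x_{k+1}}+2\inner{y_{k+1}-z_k,x_{k+1}-y_{k+1}}
=\alpha\norm{x_k-x}^2+(1-\alpha)\norm{w_k-x}^2-\norm{x_{k+1}-x}^2-\alpha\norm{y_{k+1}-x_k}^2-(1-\alpha)\norm{y_{k+1}-w_k}^2-\norm{x_{k+1}-y_{k+1}}^2.
\]
This follows by expanding $z_k$ inside the three-point identities: the $\norm{z_k-\cdot}^2$ terms combine, and the $\alpha(1-\alpha)\norm{x_k-w_k}^2$ pieces cancel.

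\textbf{Step 4: conditional expectation and the energy.} Take $\Ex[\cdot\vert\scrH_k]$. Since $y_{k+1}$ is $\scrH_k$-measurable and $\Ex[\opA_{k+1}\vert\scrH_k]=\opV_k(y_{k+1})$, the term $\inner{\tau_k\opA_{k+1},x-y_{k+1}}$ produces exactly $-\tau_k\Psi_k(x)$ after moving the $G_k$ terms over. For the energy, the update rule of $w_{k+1}$ gives
\[
\Ex\bigl[\tfrac{1-\alpha}{\theta}\norm{w_{k+1}-x}^2\vert\scrF_k\bigr]=(1-\alpha)\norm{x_{k+1}-x}^2+\tfrac{(1-\alpha)(1-\theta)}{\theta}\norm{w_k-x}^2.
\]
Adding $\alpha\norm{x_{k+1}-x}^2$ to this shows that
\[
\Ex[\scrE_{k+1}(x)\vert\scrH_k]=\Ex[\norm{x_{k+1}-x}^2\vert\scrH_k]+\tfrac{(1-\alpha)(1-\theta)}{\theta}\norm{w_k-x}^2,
\]
which matches $\scrE_k(x)$ minus the terms above. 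Here I assume the coin for $w_{k+1}$ is independent of $\xi_k$, so that conditioning on $\scrF_k$ is legitimate.

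\textbf{Step 5: the variance term (main obstacle).} It remains to bound
\[
2\tau_k\Ex\bigl[\norm{\opV^{\xi_k}_k(y_{k+1})-\opV^{\xi_k}_k(w_k)}\,\norm{y_{k+1}-x_{k+1}}\vert\scrH_k\bigr].
\]
By Young's inequality with weight $\gamma$ this is at most
\[
\tfrac{\tau_k^2}{\gamma}\Ex\norm{\opV^{\xi_k}_k(y_{k+1})-\opV^{\xi_k}_k(w_k)}^2+\gamma\,\Ex\norm{y_{k+1}-x_{k+1}}^2.
\]
Under a mean-square version of \eqref{eq:LipV}, the first piece is at most $\tfrac{\tau_k^2\scrL_k^2}{\gamma}\norm{y_{k+1}-w_k}^2\leq\gamma(1-\alpha)\norm{y_{k+1}-w_k}^2$ by the step-size condition. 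This yields exactly the $(1-\gamma)$ factors in \eqref{eq:Energy2}.

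The delicate point is that \eqref{eq:LipV} as stated bounds only the first moment. I would interpret it in the mean-square sense, which is what the oracles of Example \ref{ex:sampling_prob} actually satisfy (the constant $\sqrt{\abs{\scrA}\sum_{\ca}L_{k,\ca}^2}$ is a mean-square constant). Alternatively, one could keep the product form and use boundedness, but the squared version is the natural route.
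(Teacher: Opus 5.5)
Your proposal is correct and follows the paper's proof essentially step by step. The shared steps are: the two prox inequalities (with the $y_{k+1}$-step tested at $x_{k+1}$), the expansion around $z_k$ in which the $\alpha(1-\alpha)\norm{x_k-w_k}^2$ terms cancel, the same regrouping of the operator terms into $\inner{\opA_{k+1},x-y_{k+1}}$ plus a variance term, Young's inequality with weight $\gamma$ combined with the step-size rule, and the conditional expectation over the coin defining $w_{k+1}$.

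Your caveat about \eqref{eq:LipV} is well placed. The paper's proof also silently uses the mean-square bound $\Ex\norm{\opV_k^{\xi}(u)-\opV_k^{\xi}(v)}^2\leq\scrL_k^2\norm{u-v}^2$ at exactly this step, so Assumption~\ref{ass:Oracle} has to be read in that form for the argument to go through.
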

Departing from the energy inequality \eqref{eq:Energy2}, we establish the almost sure boundedness of the sample paths of the stochastic process generated by the Algorithm. The proof is given in Appendix \ref{app:ProofsEuclid}. 
\begin{lemma}\label{lem:boundsequence}
Let Assumptions \ref{ass:standing}- \ref{ass:ACcondition} hold. Let $x^{*}\in\scrS_{1}$ be arbitrary. Then, under the same conditions as in Lemma \ref{lem:energy}, the following statements hold true:
\begin{itemize}
\item[(i)] $\lim_{k\to\infty}\scrE_{k}(x^{*})$  exists $\Pr$-a.s.
\item[(ii)] $\lim_{k\to\infty}\norm{y_{k+1}-w_{k}}^{2}=\lim_{k\to\infty}\norm{y_{k+1}-x_{k+1}}^{2}=0$ $\Pr$-a.s. 
\end{itemize}
In particular, the processes $(x_{k})_{k},(w_{k})_{k}$ and $(y_{k})_{k}$ are almost surely bounded, and 
\begin{equation}\label{eq:aux1}
    \sum_{k=0}^{\infty}\left((1-\alpha)\Ex\left[\norm{y_{k+1}-w_{k}}^{2}\right]+\Ex\left[\norm{x_{k+1}-y_{k+1}}^{2}\right]\right)\leq\frac{1}{1-\gamma}\left(\scrE_{0}(x^{*})+2\sum_{k=0}^{\infty}\tau_{k}h_{k}\right),
\end{equation}
where $h_{k}\eqdef \sup_{x\in\scrS_{2}}\varphi^{(\opF_{2},g_{2})}(x,\beta_{k}p^{*})-\supp(\beta_{k}p^{*}\vert\scrS_{2})$ with a certain $p^\ast \in \NC_{\scrS_{2}}(x^\ast)$.
\end{lemma}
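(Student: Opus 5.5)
The plan is to apply the energy inequality \eqref{eq:Energy2} of Lemma \ref{lem:energy} with $x=x^{*}\in\scrS_{1}$, bound $-2\tau_k\Psi_k(x^*)$ from above by a summable term, and then invoke the Robbins--Siegmund supermartingale convergence theorem.

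\textbf{Step 1: choosing the multiplier.} By Assumption \ref{ass:CQ}, $x^{*}\in\zer(\opF_{1}+\partial g_{1}+\NC_{\scrS_{2}})$. Hence there exist $p^{*}\in\NC_{\scrS_{2}}(x^{*})$ and $v\in\partial g_{1}(x^*)$ with $\opF_1(x^*)+v+p^*=0$. For any $y\in\dom(g_1)\cap\dom(g_2)$, monotonicity of $\opF_1$ and the subgradient inequality give
\[
\beta_k\bigl[\inner{\opF_1(y),y-x^*}+g_1(y)-g_1(x^*)\bigr]\geq \beta_k\inner{\opF_1(x^*)+v,y-x^*}=-\inner{\beta_kp^*,y-x^*}.
\]

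\textbf{Step 2: lower bound on $\Psi_k(x^*)$.} Write $\Psi_k(x^*)$ as the sum of the upper-level term above (evaluated at $y=y_{k+1}$) and the lower-level term $\inner{\opF_2(y_{k+1}),y_{k+1}-x^*}+g_2(y_{k+1})-g_2(x^*)$. Monotonicity of $\opF_2$ bounds the lower-level term below by $H^{(\opF_2,g_2)}(y_{k+1},x^*)$. This yields
\[
\Psi_k(x^*)\geq H^{(\opF_2,g_2)}(y_{k+1},x^*)-\inner{\beta_kp^*,y_{k+1}}+\inner{\beta_kp^*,x^*}.
\]
Since $p^*\in\NC_{\scrS_2}(x^*)$, we have $\inner{\beta_kp^*,x^*}=\supp(\beta_kp^*\vert\scrS_2)$. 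For the remaining terms, \eqref{eq:dualphimaintext} and the definition of $\varphi$ (with $u=\beta_kp^*$, $y=y_{k+1}$) give
\[
\inner{\beta_kp^*,y_{k+1}}-H^{(\opF_2,g_2)}(y_{k+1},x^*)\leq \bigl(H^{(\opF_2,g_2)}(\bullet,x^*)\bigr)^*(\beta_kp^*).
\]
The delicate point is relating this conjugate to $\sup_{z\in\scrS_2}\varphi(z,\beta_kp^*)$, which is what $h_k$ contains. I would use the Fitzpatrick-type identity \eqref{eq:dualphi} from Appendix \ref{app:VIs} to identify this quantity with $\varphi^{(\opF_2,g_2)}(x^*,\beta_kp^*)$, or bound it by that. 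If the appendix only gives the inequality in the direction displayed in \eqref{eq:dualphimaintext}, I would instead re-derive the bound directly: use monotonicity to swap $H(y,x^*)$ for $-H(x^*,y)$ up to sign, which yields $\varphi(x^*,\beta_kp^*)$ precisely. Either way, $-2\tau_k\Psi_k(x^*)\leq 2\tau_kh_k$.

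\textbf{Step 3: supermartingale argument.} Inserting the bound from Step 2 into \eqref{eq:Energy2} gives
\[
\Ex[\scrE_{k+1}(x^*)\vert\scrH_k]\leq \scrE_k(x^*)+2\tau_kh_k-(1-\gamma)\zeta_k,
\]
where $\zeta_k\geq0$ collects the squared-difference terms. Note $h_k\geq0$, since $\varphi(z,u)\geq \inner{z,u}$ by taking $y=z$. Also $\sum\tau_kh_k<\infty$ by Assumption \ref{ass:ACcondition}. Two conditioning issues need care:
\begin{itemize}
\item $\scrE_k$ is $\scrH_k$-measurable, because $x_k$ is $\scrF_{k-1}\subseteq\scrH_k$-measurable.
\item Robbins--Siegmund requires a single filtration. $\scrH_k\subseteq\scrH_{k+1}$ holds, so the theorem applies.
\end{itemize}
Robbins--Siegmund then gives part (i) and $\sum_k\zeta_k<\infty$ a.s. Consequently $\norm{y_{k+1}-w_k}\to0$ (using $1-\alpha>0$) and $\sum_k\Ex[\norm{x_{k+1}-y_{k+1}}^2\vert\scrH_k]<\infty$. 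To pass from the conditional to the pathwise statement, I would apply the conditional Borel--Cantelli / Lévy extension, or simply take full expectations.

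\textbf{Step 4: conclusions.} Taking total expectations and telescoping yields \eqref{eq:aux1}, using $\scrE_k\geq0$. Summability of $\Ex\norm{x_{k+1}-y_{k+1}}^2$ implies $\sum\norm{x_{k+1}-y_{k+1}}^2<\infty$ a.s., which gives part (ii). For boundedness: convergence of $\scrE_k(x^*)$ bounds $x_k$ and $w_k$ (assuming $\alpha>0$ for $x_k$), and part (ii) then bounds $y_k$.

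The main obstacle is Step 2: obtaining the exact form of $h_k$, with the supremum over $\scrS_2$ of $\varphi$, from the properties of $\varphi$ stated in Appendix \ref{app:VIs}.
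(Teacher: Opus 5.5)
There is a genuine gap in Step 2, and your first move there creates it. Applying monotonicity of $\opF_2$ replaces the lower-level part of $\Psi_k(x^*)$ by the smaller quantity $H^{(\opF_2,g_2)}(y_{k+1},x^*)$. After that, the best you can say is $\inner{\beta_kp^*,y_{k+1}}-H^{(\opF_2,g_2)}(y_{k+1},x^*)\leq \bigl(H^{(\opF_2,g_2)}(\bullet,x^*)\bigr)^*(\beta_kp^*)$. But \eqref{eq:dualphi} states $\varphi^{(\opF_2,g_2)}(x^*,u)\leq \bigl(H^{(\opF_2,g_2)}(\bullet,x^*)\bigr)^*(u)$, so the conjugate dominates $\varphi$ and cannot be bounded by it. Your fallback fails for the same reason. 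Monotonicity gives $H^{(\opF_2,g_2)}(y_{k+1},x^*)\leq -H^{(\opF_2,g_2)}(x^*,y_{k+1})$, i.e.\ $-H^{(\opF_2,g_2)}(y_{k+1},x^*)\geq H^{(\opF_2,g_2)}(x^*,y_{k+1})$, which is a lower bound on the very term you need to bound from above. Once $\opF_2(y_{k+1})$ has been replaced by $\opF_2(x^*)$, there is no general inequality leading back to $\varphi$. As written, your dissipation term would be $\bigl(H^{(\opF_2,g_2)}(\bullet,x^*)\bigr)^*(\beta_kp^*)-\supp(\beta_kp^*\vert\scrS_2)$. Assumption \ref{ass:ACcondition} does not give its summability. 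It would hold under weak sharpness, by the computation in Remark \ref{rem:Tikhonov}, but the lemma does not assume that.

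The fix is to not use monotonicity of $\opF_2$ at all. In the definition of $H$, the operator is evaluated at the second argument, so the lower-level part of $\Psi_k(x^*)$ is exactly $\inner{\opF_2(y_{k+1}),y_{k+1}-x^*}+g_2(y_{k+1})-g_2(x^*)=-H^{(\opF_2,g_2)}(x^*,y_{k+1})$. Combining this with your Step 1 and $\inner{\beta_kp^*,x^*}=\supp(\beta_kp^*\vert\scrS_2)$ gives
\[
-\Psi_k(x^*)\leq \inner{\beta_kp^*,y_{k+1}}+H^{(\opF_2,g_2)}(x^*,y_{k+1})-\supp(\beta_kp^*\vert\scrS_2)\leq \varphi^{(\opF_2,g_2)}(x^*,\beta_kp^*)-\supp(\beta_kp^*\vert\scrS_2)\leq h_k .
\]
The middle inequality is just the definition \eqref{eq:phimaintext} of $\varphi$ with $y=y_{k+1}\in\dom(g_2)$. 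This is exactly the paper's argument, and monotonicity is needed only for $\opF_1$. With this repair, your Steps 3 and 4 go through and coincide with the paper's Robbins--Siegmund argument. Your treatment of $\norm{x_{k+1}-y_{k+1}}\to 0$, via summability of full expectations, is more careful than the paper's, which passes directly from $(D_k)_k\in\ell^1_+$ to pathwise convergence.
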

We now come to our main theorem for the Euclidean case. It contains explicit complexity statements of expected feasibility and optimality gaps evaluated on the ergodic sequence generated by Algorithm \ref{alg:Euclid}. The proof of this theorem is rather technical and presented in Appendix \ref{app:ProofsEuclid}. 
\begin{theorem}\label{th:mainMonotone}
    Let Assumptions \ref{ass:standing}-\ref{ass:ACcondition} hold. Given $\theta\in(0,1],\alpha=1-\theta$, and $\tau_{k}\leq \frac{\sqrt{\theta}}{\scrL_{k}}\gamma$ for $\gamma\in(0,1)$. Let $\scrU_1, \scrU_2$ as in Lemma \ref{lem:LB}, let $x^\ast \in \scrS_1$, and define $p^\ast, h_k$ as in Lemma \ref{lem:boundsequence}. Further, define
$\bar{y}^{K}\eqdef \frac{\sum_{k=0}^{K-1}\tau_{k}y_{k+1}}{T_{k}}$ and $T_{K}\eqdef \sum_{k=0}^{K-1}\tau_{k}$. 
Then, there are constants $C_{\scrU_1}, C_{\scrU_2}, B_{\scrU_1} > 0$ such that we have 
\begin{equation}\label{eqthm:feas}
        \begin{aligned}
          \Ex[\Theta_{\rm Feas}(\bar{y}^{K}\vert\scrU_{2})]&\leq \frac{7}{4T_{K}}\max_{x\in\scrU_{2}}\norm{x-x_{0}}^{2}+\frac{7}{2T_{K}(1-\gamma)}\sum_{k=0}^{\infty}\tau_{k}h_{k}\\
          &+C_{\scrU_{2}}\frac{\sum_{k=0}^{K-1}\tau_{k}\beta_{k}}{T_{K}}+\frac{7}{2T_{K}(1-\gamma)}\norm{x^{*}-x_{0}}^{2}\eqdef W_{\rm Feas}(K,x_{0},,x^{*},\scrU_{2}),
\end{aligned}
\end{equation}
\begin{equation}\label{eqthm:opt}
    \begin{aligned}
    -B_{\scrU_1}\dist(\bar{y}^{K},\scrS_{2})&\leq \Ex \left[\Theta_{\rm Opt}(\bar{y}^{K}\vert\scrU_1\cap\scrS_{2})\right] \\
    &\quad 
    \leq  \frac{C_{\scrU_1}}{2T_K \beta_K}+\frac{3}{4T_{K}}\max_{x\in \scrU_1 \cap \scrS_2} \norm{x-x_0}^2+\frac{7}{2(1-\gamma)T_{K}\beta^{2}_{K}}\norm{x^{*}-x_{0}}^{2}\\
    &+\frac{7}{2(1-\gamma)T_{K}\beta^{2}_{K}}\sum_{k=0}^{\infty}\tau_{k}h_{k}\eqdef W_{\rm Opt}(K,x_{0},x^{*},\scrU_{1}).
    \end{aligned}
\end{equation}
 If the lower level solution set $\scrS_{2}$ is $(\kappa,\rho)$-weakly sharp, then we also have the lower bound 
\begin{equation}\label{eqthm:optweaklysharp}
        -B_{\scrU_{1}}\left[\frac{\rho}{\kappa}W_{\rm Feas}(K,x_{0},\scrU_{2})\right]^{1/\rho}\leq \Ex \left[\Theta_{\rm Opt}(\bar{y}^{K}\vert\scrU_1\cap\scrS_{2})\right]
\end{equation}
\end{theorem}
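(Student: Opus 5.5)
The plan is to start from the energy inequality of Lemma \ref{lem:energy}, now evaluated at an arbitrary test point $x$ rather than at $x^{*}$. The reason is that $\Psi_{k}(x)$ involves $\opV_{k}(y_{k+1})$, which is random, while the gap functions are suprema over $x$. We therefore cannot take conditional expectations before taking the supremum. Instead we split $\opA_{k+1}=\opV_{k}(y_{k+1})+e_{k}$, where $e_{k}\eqdef\opA_{k+1}-\opV_{k}(y_{k+1})$ is a martingale difference with respect to $\filterH$.

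We redo the three-point estimate behind Lemma \ref{lem:energy} pathwise, using \eqref{eq:prox} for $x_{k+1}$ and $y_{k+1}$. This gives
\[
2\tau_{k}\Psi_{k}(x)\leq \scrE_{k}(x)-\scrE_{k+1}(x)+\text{(martingale terms from $w_{k+1}$ and $e_{k}$)}+2\tau_{k}\inner{e_{k},x-y_{k+1}}.
\]
The $x$-dependent error term is handled with the standard ghost-iterate trick. We introduce $u_{k+1}=u_{k}-\tau_{k}e_{k}$ with $u_{0}=x_{0}$. Then $\sum_{k}2\tau_{k}\inner{e_{k},x-u_{k}}\leq\norm{x-x_{0}}^{2}+\sum_{k}\tau_{k}^{2}\norm{e_{k}}^{2}$, and the remaining terms $\inner{e_{k},u_{k}-y_{k+1}}$ have zero conditional mean.

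The variance term is controlled through Assumption \ref{ass:Oracle}:
\[
\Ex\norm{e_{k}}^{2}\lesssim\scrL_{k}^{2}\norm{y_{k+1}-w_{k}}^{2}.
\]
Since $\tau_{k}^{2}\scrL_{k}^{2}\leq\gamma^{2}\theta$ and $1-\alpha=\theta$, this is summable by \eqref{eq:aux1} in Lemma \ref{lem:boundsequence}, with total bounded by $\frac{1}{1-\gamma}(\scrE_{0}(x^{*})+2\sum_{k}\tau_{k}h_{k})$. The $\frac{1-\alpha}{\theta}\norm{w-x}^{2}$ part of $\scrE$ is treated the same way. Together these produce the constants $\frac74$, $\frac{7}{2(1-\gamma)}$ and the appearance of $\norm{x^{*}-x_{0}}^{2}$ and $\sum\tau_{k}h_{k}$.

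\textbf{Feasibility gap.} Write $\Psi_{k}(x)$ as $\Psi_{k}(x)=\beta_{k}[\inner{\opF_{1}(y_{k+1}),y_{k+1}-x}+g_{1}(y_{k+1})-g_{1}(x)]+[\inner{\opF_{2}(y_{k+1}),y_{k+1}-x}+g_{2}(y_{k+1})-g_{2}(x)]$. By monotonicity of $\opF_{2}$, the second bracket is at least $H^{(\opF_{2},g_{2})}(y_{k+1},x)$. The first bracket is bounded below by $-C_{\scrU_{2}}$ on the a.s. bounded region. This uses Lemma \ref{lem:boundsequence}, which confines the iterates to a bounded set, Lipschitz continuity of $\opF_{1}$, and Assumption \ref{ass:BVg} for $g_{1}$. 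Summing with weights $\tau_{k}$ and using convexity of $H^{(\opF_{2},g_{2})}(\cdot,x)$ to pass to $\bar y^{K}$, we divide by $T_{K}$, take the supremum over $x\in\scrU_{2}$, and then take expectations. This yields \eqref{eqthm:feas}.

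\textbf{Optimality gap.} Restrict to $x\in\scrU_{1}\cap\scrS_{2}$. Then $\inner{\opF_{2}(y),y-x}+g_{2}(y)-g_{2}(x)\geq H^{(\opF_{2},g_{2})}(y,x)\geq 0$, since $x\in\scrS_{2}$ and by monotonicity. The lower-level part can therefore be dropped, and we get $\sum_{k}\tau_{k}\beta_{k}H^{(\opF_{1},g_{1})}(y_{k+1},x)$ bounded by the same right-hand side.

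The difficulty is converting this $\tau_{k}\beta_{k}$-weighted sum into the $\tau_{k}$-weighted average $\bar y^{K}$. Since $\beta_{k}$ is nonincreasing, I would use $\beta_{k}\geq\beta_{K}$ together with the decomposition $H^{(\opF_{1},g_{1})}=(H^{(\opF_{1},g_{1})})_{+}-(H^{(\opF_{1},g_{1})})_{-}$. The negative parts are controlled by $C_{\scrU_{1}}$, which gives the $\frac{C_{\scrU_{1}}}{2T_{K}\beta_{K}}$ term. Dividing by $T_{K}\beta_{K}$ produces the $\beta_{K}^{-1}$ factors. The stated $\beta_{K}^{-2}$ gives extra slack from the crude bounding of the error terms, so a factor $\beta_{K}\leq\beta_{1}$ can be absorbed.

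The lower bounds follow directly from Lemma \ref{lem:LB}. Inequality \eqref{eq:LB1} gives the left side of \eqref{eqthm:opt}. For \eqref{eqthm:optweaklysharp}, we combine \eqref{eq:LB1} with \eqref{eq:WS} and use Jensen's inequality for the concave map $t\mapsto t^{1/\rho}$ applied to \eqref{eqthm:feas}. This uses $\Theta_{\rm Feas}(\cdot\vert\scrU_{2}\cap\scrS_{2})\leq\Theta_{\rm Feas}(\cdot\vert\scrU_{2})$.

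\textbf{Main obstacle.} I expect the hardest step to be the interchange of supremum and expectation in the presence of the noise $e_{k}$ and the random $w$-updates, via the ghost sequence. Keeping the constants tied to \eqref{eq:aux1} requires care. A secondary issue is justifying that $C_{\scrU_{i}}$ is deterministic, even though the boundedness in Lemma \ref{lem:boundsequence} is only almost sure. For this I would try to bound $\norm{y_{k+1}}$ in expectation via $\Ex\scrE_{k}(x^{*})\leq\scrE_{0}(x^{*})+2\sum_{k}\tau_{k}h_{k}$.
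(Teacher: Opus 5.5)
Your feasibility-gap argument and both lower bounds follow the paper's route. The ghost sequence is just the proof of Lemma~\ref{lem:nemirovski}, which the paper applies to $M_1^k(x)$ and to the resampling term $R_1^k(x)$. The lower bounds come from Lemma~\ref{lem:LB} plus Jensen, exactly as you describe. One small fix: with $u_{k+1}=u_k-\tau_k e_k$ your displayed inequality holds for $\sum_k 2\tau_k\inner{e_k,u_k-x}$; for the form you wrote, take $u_{k+1}=u_k+\tau_k e_k$.

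The genuine gap is the upper bound on the optimality gap. Write $H_k\eqdef H^{(\opF_1,g_1)}(y_{k+1},x)$ for $x\in\scrU_1\cap\scrS_2$. From $\sum_k\tau_k\beta_kH_k\le R$, splitting $H_k=(H_k)_+-(H_k)_-$ and using $\beta_k\ge\beta_K$ only yields
\[
\sum_{k=0}^{K-1}\tau_kH_k\le\frac{R}{\beta_K}+\sum_{k=0}^{K-1}\tau_k(H_k)_-\Big(\frac{\beta_k}{\beta_K}-1\Big).
\]
If $(H_k)_-\le C$, the last sum divided by $T_K$ is $C\big(\frac{\sum_k\tau_k\beta_k}{\beta_KT_K}-1\big)$. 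For constant $\tau_k$ and $\beta_k=a(k+b)^{-\delta}$ this tends to $C\delta/(1-\delta)>0$, so it is not of order $1/(T_K\beta_K)$ at all. Nor can $(H_k)_-$ be shown to vanish. For a non-optimal $x\in\scrU_1\cap\scrS_2$, $H_k$ stays negative of order one once $y_{k+1}$ is near $\scrS_1$. A warning sign: had this route worked, it would give the rate $1/(T_K\beta_K)$ for every term, which is better than what is claimed.

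What is missing is to divide the one-step inequality \eqref{eq:rate1} by $\beta_k$ \emph{before} summing.
\begin{itemize}
\item Since $\Psi_k(x)\ge\beta_kH_k$ for $x\in\scrS_2$, the left side becomes $2\tau_kH_k$, which already carries the weights of $\bar y^K$.
\item The energy terms become $\sum_k\beta_k^{-1}(\scrE_k(x)-\scrE_{k+1}(x))$. Abel summation, $\scrE_k\ge 0$ and the monotonicity of $\beta_k^{-1}$ bound this by $\sup_k\scrE_k(x)/\beta_{K-1}\le\bar E(x)/\beta_K$. This is the origin of the $C_{\scrU_1}/(2T_K\beta_K)$ term.
\item Summation by parts works on the energy because it is nonnegative and bounded. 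It fails on $H_k$, which has no sign; that is precisely why your conversion breaks.
\item The martingale terms now carry weights $\tau_k/\beta_k$ and $1/\beta_k$, and $\Ex[\beta_k^{-1}(2\tau_kM_2^k-R_2^k)\vert\scrH_k]\le 0$ still holds.
\item Lemma~\ref{lem:nemirovski} then gives a $\beta$-free initial term $\frac12\max\norm{x-x_0}^2$, hence the $\frac{3}{4T_K}$ term. It also gives variance terms weighted by $\beta_k^{-2}\le\beta_K^{-2}$, hence the $\frac{7}{2(1-\gamma)T_K\beta_K^2}$ terms.
\end{itemize}
So the $\beta_K^{-2}$ in the statement is not slack absorbing a factor $\beta_K$; it comes from squaring the weights $\tau_k/\beta_k$. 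Your idea of replacing the pathwise bound $\bar E(x)$ by an expectation bound on $\sup_{x}\scrE_k(x)$, via $\Ex\scrE_k(x^*)\le\scrE_0(x^*)+2\sum_k\tau_kh_k$, fits naturally into this Abel-summation step.
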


To obtain more comprehensive rates on the feasibility and optimality gap, we next consider the specific Tikhonov sequence 
\begin{equation}\label{eq:betapower}
\beta_{k}=\frac{a}{(k+b)^{\delta}}\quad a,b>0,k\geq 0,\delta\in(0,1/2).
\end{equation}
The weak sharpness of $\scrS_{2}$ implies $h_{k}\leq \frac{\rho-1}{\rho}\alpha^{-\frac{1}{\rho-1}}\beta_{k}^{\frac{\rho}{\rho-1}}\norm{p^{*}}^{\frac{\rho}{\rho-1}}$ (cf. Remark \ref{rem:Tikhonov}). Let $\rho^{*}$ be the conjugate parameter $\frac{1}{\rho^{*}}=1-\frac{1}{\rho}$ and assume that $\delta>\frac{1}{\rho^{*}}$. Setting $C_{\rho}=\norm{p^{*}}^{1/\rho^{*}}\alpha^{-1/(\rho-1)}\frac{1}{\rho^{*}}$, and adopting the polynomial Tikhonov sequence \eqref{eq:betapower} yields  
\begin{align}
    \sum_{k=0}^{K}h_{k}&\leq C_{\rho}\sum_{k= 0}^{K}a^{\rho^{*}}(k+b)^{-\delta\rho^{*}}\leq C_{\rho}\sum_{k= 0}^{\infty}a^{\rho^{*}}(k+b)^{-\delta\rho^{*}} \leq C_{\rho}\left[a^{\rho^{*}}b^{-\rho^{*}\delta}+\frac{a^{\rho^{*}}b^{1-\rho^{*}\delta}}{\delta\rho^{*}-1}\right]\eqdef\bar{h}_{\rho}. \label{eq:barh}
\end{align}
Inserting this bound in the complexity estimates derived in Theorem \ref{th:mainMonotone} yields the following comprehensive set of complextity guarantees.
\begin{proposition}\label{prop:ratesEucliddelta}
Assume that $\scrS_{2}$ is $(\kappa,\rho)$-weakly sharp. Let Algorithm \ref{alg:Euclid} use constant step size $\tau_{k}=\bar{\tau}\leq \frac{\sqrt{\theta}\gamma}{L_{1}}$ and regularization sequence $(\beta_{k})_{k\geq 0}$ of the form \eqref{eq:betapower} with $a,b>0$ and $\delta>1-1/\rho=\frac{1}{\rho^{*}}$. Then, we have 
\begin{equation}
W_{\rm Feas}(K,x_{0},x^{*},\scrU_{2})\leq O((K+b)^{-\delta}), \quad \text{ and } \quad W_{\rm Opt}(K,x_{0},x^{*},\scrU_{1})\leq O((K+b)^{-(1-2\delta)}). 
\end{equation}
\end{proposition}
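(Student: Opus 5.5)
The plan is to substitute the constant step size $\tau_k=\bar\tau$ and the polynomial sequence \eqref{eq:betapower} into the bounds $W_{\rm Feas}$ and $W_{\rm Opt}$ of Theorem \ref{th:mainMonotone}, and then read off the order of each term. With $\tau_k=\bar\tau$ we have $T_K=\bar\tau K$. First, the condition $\delta>1/\rho^{*}$ gives $\delta\rho^{*}>1$. By Remark \ref{rem:Tikhonov} and \eqref{eq:barh}, the series $\sum_{k\ge0}\tau_kh_k\le\bar\tau\,\bar h_\rho$ is then finite and independent of $K$. Hence every term of the form $\frac{c}{T_K}\bigl(\max\norm{x-x_0}^2+\norm{x^*-x_0}^2+\sum_k\tau_kh_k\bigr)$ is $O(1/K)$. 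Since $(K+b)/K\le 1+b$ for $K\ge1$, this is also $O((K+b)^{-1})$.

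For the feasibility bound, the only nontrivial term is $C_{\scrU_2}\frac{1}{K}\sum_{k=0}^{K-1}\beta_k$. I compare the sum with an integral:
\[
\sum_{k=0}^{K-1}(k+b)^{-\delta}\le b^{-\delta}+\int_{b}^{K+b}s^{-\delta}\,ds\le b^{-\delta}+\frac{(K+b)^{1-\delta}}{1-\delta}.
\]
Dividing by $K$ and using $(K+b)/K\le1+b$ again, this term is $O((K+b)^{-\delta})$. Since $\delta<1$, the $O(1/K)$ terms are dominated, and $W_{\rm Feas}=O((K+b)^{-\delta})$.

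For the optimality bound, $\beta_K^{-1}=a^{-1}(K+b)^{\delta}$ and $\beta_K^{-2}=a^{-2}(K+b)^{2\delta}$. The term $\frac{C_{\scrU_1}}{2T_K\beta_K}$ is therefore $O((K+b)^{-(1-\delta)})$. The two terms carrying $\beta_K^{-2}$ contain only $K$-independent quantities, namely $\norm{x^*-x_0}^2$ and $\sum_k\tau_kh_k\le\bar\tau\bar h_\rho$. They are $O((K+b)^{2\delta}/K)=O((K+b)^{-(1-2\delta)})$, and this is the dominant order. The remaining $\frac{3}{4T_K}\max\norm{x-x_0}^2$ term is $O(1/K)$. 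Because $\delta<1/2$, the exponent $1-2\delta$ is positive and is the smallest of $1$, $1-\delta$, $1-2\delta$. This gives $W_{\rm Opt}=O((K+b)^{-(1-2\delta)})$.

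I do not expect a real obstacle, as the argument is bookkeeping. The only care needed is in two places. One is checking that the hypothesis $\delta>1/\rho^{*}$, together with $\delta<1/2$, makes $\sum_k h_k$ finite uniformly in $K$; this needs $\rho^{*}>2$, i.e.\ $\rho<2$, consistent with weak sharpness. The other is converting factors of $1/K$ into $(K+b)^{-1}$ up to a constant depending on $b$.
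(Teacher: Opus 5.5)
Your proposal is correct and follows the paper's proof. Both substitute $\tau_k=\bar\tau$ (so $T_K=\bar\tau K$) and the polynomial $\beta_k$ into $W_{\rm Feas}$ and $W_{\rm Opt}$ from Theorem \ref{th:mainMonotone}, bound $\sum_k h_k$ by $\bar h_\rho$ and $\sum_{k<K}\beta_k$ by an integral comparison, and read off the leading orders $(K+b)^{-\delta}$ and $(K+b)^{-(1-2\delta)}$; your bookkeeping is slightly more careful than the paper's, since you keep the $\frac{3}{4T_K}\max\norm{x-x_0}^2$ term at $O(1/K)$ and make the $K$ versus $K+b$ conversion explicit.
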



\begin{remark} 
Consider the bound on $W_{\rm Feas}$, where we substitute $\bar{\tau}= \frac{\sqrt{\theta}\gamma}{L_{1}}$. Omitting numerical constants, we obtain the bound
\begin{align*}
W_{\rm Feas}(K,x_{0},x^{*},\scrU_{2})
&\leq \frac{L_1\max_{x\in\scrU_{2}}\norm{x-x_{0}}^{2}+L_1\norm{x^{*}-x_{0}}^{2}}{\gamma(1-\gamma)\sqrt{\theta}K}+\frac{\bar{h}_{\rho}+ C_{\scrU_{2}} }{(1-\gamma)K^{\delta}},
\end{align*}
where we've used $\gamma,\theta\in (0,1)$ and hence $1\leq 1/(1-\gamma)$ and $1\leq 1/(\gamma \sqrt{\theta})$, as well as $1/K\leq 1/K^{\delta}$ for $K\geq 1$. Thus, to attain an $\varepsilon$-optimal solution in terms of the feasibility gap, it is sufficient to take $K=O\left(\max\left\{\frac{1}{\sqrt{\theta}\varepsilon},\frac{1}{\varepsilon^{1/\delta}}\right\}\right)$ iterations. Each iteration on average requires $\theta |\scrA|+2$ evaluations of $\opV^{\ca}_{k}$. Thus, to reach accuracy $\varepsilon$ on average the algorithm needs 
$O\left(\max\left\{\frac{1}{\sqrt{\theta}\varepsilon},\frac{1}{\varepsilon^{1/\delta}}\right\}\right) \cdot (\theta |\scrA|+2)$ evaluations of the stochastic oracle. Optimizing this estimate with respect to $\theta$ gives $\theta=2/|\scrA|$, and the total complexity becomes 
$O\left(\max\left\{\frac{\sqrt{|\scrA|}}{\varepsilon},\frac{1}{\varepsilon^{1/\delta}}\right\}\right)$. 
Note that existing (deterministic) algorithms would require $|\scrA|$ evaluations of $\opV^{\ca}_{k}$ per iteration, and the total complexity is $|\scrA|\max\{1/\varepsilon,1/\varepsilon^{1/\delta}\}$ evaluations of $\opV^{\ca}_{k}$. This demonstrates the potential effectiveness of variance reduction. We also note that each term in the maximum has a natural interpretation. The first term corresponds to the standard single-level setting and is the same as in \cite{alacaoglu2022stochastic}; The second term stems from the hierarchical tructure of our problem and indicates the intrinsic complexity of the bilevel problem.
\end{remark}

%% file: Bregman.tex
%

In this section we give a non-euclidean extension of  Algorithm \ref{alg:Euclid}. Our algorithmic design follows closely \cite{alacaoglu2022stochastic}, but the analysis differs significantly due to the hierarchical nature of problem \eqref{eq:P}.

Let $\scrX$ be a finite-dimensional vector space with norm $\norm{\cdot}$, with dual space $\scrX^{*}$. Its dual norm is defined as $\norm{v}_{\ast}\eqdef\sup\{\inner{v,x}\vert\; \norm{x}\leq 1\}$. The monotone operator $\opF_{i}:\scrX\to\scrX^{*}$ is assumed to be $L_{\opF_{i}}$-Lipschitz, meaning 
$
\norm{\opF_{i}(x)-\opF_{i}(x')}_{\ast}\leq L_{\opF_{i}}\norm{x-x'}$ for all $x,x'\in\scrX.
$
Let $\distance(\bullet)\in\Gamma_{0}(\scrX)$ be a distance-generating function: $\distance$ is 1-strongly convex and continuous under a norm $\norm{\cdot}$. We follow the standard convention to assume that subdifferential of $\distance$ admits a continuous selection, which means that there exists a continuous function $\distance'$ such that $\distance'(x)\in\partial \distance(x)$ for all $x\in \dom(\partial \distance)$ (see e.g. \cite{FOMSurvey}). We define the Bregman divergence with respect to $\distance$ as $D(x,z)\eqdef \distance(x)-\distance(z)-\inner{\distance'(z),x-z}$.

Algorithm~\ref{alg:MirrorProx} defines three random sequences $(x^s_{k},y^s_{k},w^s)$ via a double loop scheme characterized by epochs $s=0,1,\ldots,S-1$ and inner loop iterations $k=0,1,\ldots,K-1$. Both $S$ and $K$ are fixed integers. The outer loop is characterized by step sizes $\tau_{s}$ and regularization parameters $\beta_{s}$. They give rise to the combined operator 
$
\opV_{s}(x)\eqdef \beta_{s}\opF_{1}(x)+\opF_{2}(x) \text{ and }G_{s}(x)=\beta_{s}g_{1}(x)+g_{2}(x).
$
As in the euclidean case, we assume that have access to an efficient stochastic oracle for the operator $\opV_{s}$. 
\begin{assumption}
\label{ass:Oracle_Bregman}
For all $s=0,1,\ldots,S-1$, we have access to a stochastic oracle $\opV^{\xi}_{s}$ such that for all $x,y\in \dom(g_{1})\cap\dom(g_{2})$ there exists a distribution $Q_{x,y}$ for which $\opV_{s}(x)=\Ex_{\xi\sim Q_{x,y}}[\opV_{s}^{\xi}(x))]$ and $\Ex_{\xi\sim Q_{x,y}}[\norm{\opV^{\xi}_{s}(x)-\opV_{s}^{\xi}(y)}^{2}_{*}]\leq L^{2}_{s}\norm{x-y}^{2}$.
\end{assumption}
 
\begin{algorithm}[h]
\SetAlgoLined
\KwData{ Step size $(\tau_{s})_{s},\alpha\in(0,1)$, $K>0$, regularization sequence $(\beta_{s})_{s}$, $x_j^{-1} = x_0^0 = w^0 ,\ \forall j \in [K]$.} 

\For{$s = 0,1,\ldots$}{
\For{$k=0,1,\ldots ,K-1$}{
%
Compute
\begin{align}
&y_{k+1}^s = \argmin_{z\in\scrZ} \left\{\tau_{s}G_{s}(z)+\tau_{s}\inner{\opV_{s}(w^s),z} + \alpha D(z,x_{k}^s) + (1-\alpha)D(z,\bar{w}^s)\right\} \label{eq:mpupd1} \\
&\text{Fix distribution } Q_{y_{k+1}^s,w^s} \text{ and sample } \xi_k^s \text{ according to it.}\nonumber \\
&\text{Define the stochastic oracle }  \opA_{k+1}^s = \opV_{s}(w^s)+\opV^{\xi_{k}^s}_{s}(y_{k+1}^s)-\opV^{\xi_{k}^s}_{s}(w^s) \notag\\
&x_{k+1}^s = \argmin_{z\in\scrZ} \left\{\tau_{s}G_{s}(z)+ \tau_{s}\inner{\opA_{k+1}^s,z} + \alpha D(z,x^s_{k}) + (1-\alpha) D(z,\bar{w}^s)\right\} \label{eq:mpupd2}
\end{align}
}
Update
$w^{s+1} = \frac{1}{K} \sum_{k=1}^K x^s_k,\;\nabla \distance(\bar{w}^{s+1}) = \frac{1}{K} \sum_{k=1}^{K} \nabla \distance(x_k^s),\; x_0^{s+1} = x_K^s$
}
\caption{Hierarchical mirror prox with variance reduction}
\label{alg:MirrorProx}
\end{algorithm}

\subsection{Analysis}
Algorithm \ref{alg:MirrorProx} and its analysis are deeply inspired by \cite{alacaoglu2022stochastic}. The hierarchical nature of our equilibrium problem, however, leads to substantial technical challenges, which require a deeper Lyapunov analysis of the stochastic process produced by the Algorithm. While the technical analysis is presented in Appendix \ref{app:ProofsBreg}, the main steps and results shall be spelled out here. 

The first important step in making progress in understanding the complexity of the scheme is a refined Lyapunov bound in the spirit of Lemma \ref{lem:energy}. The main energy terms appearing in this inequality are 
$\scrE^{s}(x)\eqdef\alpha D(x,x_0^{s})+(1-\alpha) \sum_{j=1}^{K}D(x,x_j^{s-1})$ and $h(u)\eqdef \sup_{x\in\scrS_{2}}\varphi^{(\opF_{2},g_{2})}(x,u)-\supp(u\vert\scrS_{2}).$ 
$\scrE^{s}(x)$ already appeared in \cite{alacaoglu2022stochastic}, while the second one reflects the hierarchical nature of our problem. In order to deduce from the dissipativity properties proved in terms of this function, we need a slightly refined estimate on the evolution of the sequence $(\scrE^s)_{s}$, reading as follows:
\begin{lemma}
\label{lem:bounded_Bregman}
Let Assumptions \ref{ass:standing}-\ref{ass:CQ} and \ref{ass:Oracle_Bregman} hold. Consider Algorithm \ref{alg:MirrorProx}, run with $\tau_{s}\leq \frac{\sqrt{1-\alpha}}{2L_{s}}.$ Let $x^\ast \in \scrS_1$ with corresponding $p^\ast \in \NC_{\scrS_{2}}(x^\ast)$ and assume that $\sum_{s=0}^{\infty} \tau_{s}h(\beta_{s}p^{*})<\infty$. Then 
\begin{enumerate} [label=(\roman*)]
    \item $\lim_{s\to\infty}\scrE^{s}(x^{*})$ exists $\Pr$-a.s. for all $x^{*}\in\scrS_{1}.$ 
    \item $\sum_{s=0}^{\infty}\sum_{k=0}^{K-1}\norm{y^{s}_{k+1}-w^{s}}^2<\infty$ and $\sum_{s=0}^{\infty}\sum_{k=0}^{K-1}\norm{x^{s}_{k+1}-y^{s}_{k+1}}^{2}<\infty$ $\Pr$-a.s.
    \item \label{eq:sum_iterates_Bregman}
    $\displaystyle
    \sum_{s=0}^{S-1}\sum_{k=0}^{K-1}
    \Big((1-\alpha)\Ex[\norm{y^{s}_{k+1}-w^{s}}^{2}]
    + \Ex[\norm{x^{s}_{k+1}-y^{s}_{k+1}}^{2}]\Big)
    \leq
    4\scrE^{0}(x^{*})+4K\sum_{s=0}^{S-1}\tau_{s}h(\beta_{s}p^{*}).$
\end{enumerate}
\end{lemma}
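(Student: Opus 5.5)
The plan mirrors the Euclidean argument behind Lemma \ref{lem:energy} and Lemma \ref{lem:boundsequence}, adapted to the epoch structure of \cite{alacaoglu2022stochastic}.

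\textbf{Step 1: one-step inequality.} Fix $x\in\dom(g_{1})\cap\dom(g_{2})$. The three-point (prox) inequalities for \eqref{eq:mpupd1} tested at $z=x^{s}_{k+1}$, and for \eqref{eq:mpupd2} tested at $z=x$, combine to
\begin{align*}
D(x,x^{s}_{k+1})+\tau_{s}\bigl[\inner{\opA^{s}_{k+1},y^{s}_{k+1}-x}+G_{s}(y^{s}_{k+1})-G_{s}(x)\bigr]
&\leq \alpha D(x,x^{s}_{k})+(1-\alpha)D(x,\bar w^{s})\\
&\quad-\alpha D(y^{s}_{k+1},x^{s}_{k})-(1-\alpha)D(y^{s}_{k+1},\bar w^{s})-D(x^{s}_{k+1},y^{s}_{k+1})\\
&\quad+\tau_{s}\inner{\opA^{s}_{k+1}-\opV_{s}(w^{s}),y^{s}_{k+1}-x^{s}_{k+1}}.
\end{align*}
The last term is handled by Young's inequality. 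Then take the conditional expectation, using $\Ex[\opA^{s}_{k+1}\mid\cdot]=\opV_{s}(y^{s}_{k+1})$, Assumption \ref{ass:Oracle_Bregman}, and the step-size condition $\tau_{s}\leq\sqrt{1-\alpha}/(2L_{s})$. This absorbs the error into $\tfrac{1-\alpha}{4}\norm{y^{s}_{k+1}-w^{s}}^{2}+\tfrac14\norm{x^{s}_{k+1}-y^{s}_{k+1}}^{2}$. The factor $2$ in the step size is what leaves behind strictly positive multiples of both squared norms.

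\textbf{Step 2: epoch telescoping.} Use $\nabla\distance(\bar w^{s})=\tfrac1K\sum_{j}\nabla\distance(x^{s-1}_{j})$, which gives the identity
\[
D(x,\bar w^{s})=\tfrac1K\sum_{j}D(x,x^{s-1}_{j})-\tfrac1K\sum_{j}D(\bar w^{s},x^{s-1}_{j})\leq\tfrac1K\sum_{j}D(x,x^{s-1}_{j}).
\]
Sum the inequality over $k=0,\dots,K-1$. With $x^{s+1}_{0}=x^{s}_{K}$, exactly as in \cite{alacaoglu2022stochastic}, this yields
\[
\Ex[\scrE^{s+1}(x)\mid\scrF_{s}]+\tau_{s}\sum_{k}\Ex[\Psi^{s}_{k}(x)]\leq\scrE^{s}(x)-\tfrac14\sum_{k}\Ex\bigl[(1-\alpha)\norm{y^{s}_{k+1}-w^{s}}^{2}+\norm{x^{s}_{k+1}-y^{s}_{k+1}}^{2}\bigr].
\]
Here $\Psi^{s}_{k}(x)=\inner{\opV_{s}(y^{s}_{k+1}),y^{s}_{k+1}-x}+G_{s}(y^{s}_{k+1})-G_{s}(x)$.

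\textbf{Step 3: hierarchical lower bound on $\Psi$ (the main obstacle).} Take $x=x^{*}\in\scrS_{1}$. By Assumption \ref{ass:CQ} there is $p^{*}\in\NC_{\scrS_{2}}(x^{*})$ with $-p^{*}\in\opF_{1}(x^{*})+\partial g_{1}(x^{*})$. Monotonicity of $\opF_{1}$ and the subgradient inequality give
\[
\beta_{s}\bigl[\inner{\opF_{1}(y),y-x^{*}}+g_{1}(y)-g_{1}(x^{*})\bigr]\geq-\beta_{s}\inner{p^{*},y-x^{*}}.
\]
For the lower level, since $x^{*}\in\scrS_{2}$, the definition \eqref{eq:phimaintext} yields
\[
\inner{\opF_{2}(y),y-x^{*}}+g_{2}(y)-g_{2}(x^{*})-\inner{\beta_{s}p^{*},y}\geq-\varphi(x^{*},\beta_{s}p^{*})\geq-\sup_{z\in\scrS_{2}}\varphi(z,\beta_{s}p^{*}).
\]
Moreover $\inner{\beta_{s}p^{*},x^{*}}=\supp(\beta_{s}p^{*}\vert\scrS_{2})$ because $p^{*}\in\NC_{\scrS_{2}}(x^{*})$. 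Adding the two bounds gives $\Psi^{s}_{k}(x^{*})\geq-h(\beta_{s}p^{*})$. Some care is needed about the sign and direction of the $H$ arguments so that the quantity matches $h$. This is the step I expect to be delicate, and I would mimic exactly how $h_{k}$ entered in Lemma \ref{lem:boundsequence}.

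\textbf{Step 4: conclusion.} The result is
\[
\Ex[\scrE^{s+1}(x^{*})\mid\scrF_{s}]\leq\scrE^{s}(x^{*})+K\tau_{s}h(\beta_{s}p^{*})-\tfrac14\,(\text{nonnegative terms}).
\]
The perturbation $K\tau_{s}h(\beta_{s}p^{*})$ is deterministic and summable, so the Robbins--Siegmund theorem gives items (i) and (ii). For item (iii), take total expectations, telescope over $s=0,\dots,S-1$, drop $\Ex\,\scrE^{S}\geq0$, and multiply by $4$.
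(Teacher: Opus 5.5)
Your proposal is correct and follows the paper's proof essentially step by step: the summed prox inequalities, the dual-averaging identity for $\bar w^{s}$ to telescope into $\scrE^{s}$, the bound $\Psi^{s}_{k}(x^{*})\geq -h(\beta_{s}p^{*})$ via $p^{*}$, $\varphi^{(\opF_{2},g_{2})}$ and $\supp(\beta_{s}p^{*}\vert\scrS_{2})=\inner{\beta_{s}p^{*},x^{*}}$, and then Robbins--Siegmund plus telescoping; your conditioning on an epoch-level filtration is actually what Robbins--Siegmund needs, whereas the paper only writes the Lyapunov inequality in total expectation. The one step to tighten is the absorption of $\tfrac{1-\alpha}{4}\norm{y^{s}_{k+1}-w^{s}}^{2}$: apply the identity as an equality to the difference, $D(x,\bar w^{s})-D(y^{s}_{k+1},\bar w^{s})=\tfrac1K\sum_{j}[D(x,x^{s-1}_{j})-D(y^{s}_{k+1},x^{s-1}_{j})]$, and then use strong convexity and Jensen as the paper does, not the one-sided bound you display, because $D(y^{s}_{k+1},\bar w^{s})$ on its own only controls $\norm{y^{s}_{k+1}-\bar w^{s}}$, and the terms $\tfrac1K\sum_{j}D(\bar w^{s},x^{s-1}_{j})$ you discard are exactly the ones that must cancel.
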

Our main result on the finite time iteration complexity of Algorithm \ref{alg:MirrorProx} is the following.

\begin{theorem} Let conditions of Lemma \ref{lem:bounded_Bregman} hold, $\scrU_1, \scrU_2$ be subsets, chosen as in Lemma \ref{lem:LB}. Fix $x^{*}\in\scrS_{1}$ and corresponding $p^{*}\in\NC_{\scrS_{2}}(x^{*})$. Set 
$T_S \eqdef \sum_{s=0}^{S-1} \tau_s, \, \hat{x}^S \eqdef \frac{1}{K T_S} \sum_{s=0}^{S-1} \tau_s \sum_{k=0}^{K-1} y_{k+1}^s$, 
$R^2(x^{*},x_0^0)\eqdef\sup_{x \in \scrU_2}\scrE^{0}(x)+\sup_{x \in \scrU_2}D(x,x_0^0)+\scrE^{0}(x^{*})$.
Then, there are constants $C_{\scrU_1}, C_{\scrU_2}, B_{\scrU_1} > 0$ s.t.
\begin{align}
\label{eq:Bregman_theta_feas}
     \Ex[\Theta_{\rm Feas}(\hat{x}^S\vert\scrU_{2})]&\leq \frac{R^2(x^{*},x_0^0)}{KT_{S}}+\frac{\sum_{s=0}^{S-1}\tau_sh(\beta_{s}p^{*})}{2T_{S}}+\frac{\sum_{s=0}^{S-1}\beta_{s}\tau_{s}}{T_{S}}C_{\scrU_{2}}\eqdef W^B_{\rm Feas}(K,S,x_{0}^0,x^{*},\scrU_{2})
\\
-B_{\scrU_1}\dist(\hat{x}^S,\scrS_{2})& \leq \Ex[\Theta_{\rm Opt}(\hat{x}^S\vert\scrU_{1}\cap \scrS_2)] \leq\frac{C_{\scrU_1}}{\beta_SK T_S} + \frac{\sup_{x \in \scrU_{1}\cap \scrS_2} D(x,x_0^0)}{K T_S}+  \frac{ \scrE^{0}(x^{*})}{2 K\beta_S^2 T_S} \nonumber \\ 
     & \hspace{10em}+\frac{\sum_{s=0}^{S-1}\tau_{s}h(\beta_{s}p^{*})}{2 \beta_S^2 T_S} \eqdef W^B_{\rm Opt}(K,S,x_{0}^0,x^{*},\scrU_{1}) \label{eq:Bregman_theta_opt}. 
\end{align}
 If the lower level solution set $\scrS_{2}$ is $(\kappa,\rho)$-weakly sharp, then we also have the lower bound 
\begin{equation}\label{eqthm:optweaklysharp}
        -B_{\scrU_{1}}\left[\rho\kappa^{-1}W^B_{\rm Feas}(K,S,x_{0}^0,x^{*},\scrU_{2})\right]^{1/\rho}\leq \Ex \left[\Theta_{\rm Opt}(\hat{x}^S\vert\scrU_1\cap\scrS_{2})\right].
\end{equation}
\end{theorem}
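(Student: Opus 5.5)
The proof mirrors the one planned for Theorem \ref{th:mainMonotone}, with the Euclidean energy inequality replaced by the epoch-level Bregman Lyapunov inequality that underlies Lemma \ref{lem:bounded_Bregman}.

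\textbf{Step 1: one-epoch inequality.} Fix an epoch $s$ and an arbitrary test point $x\in\dom(g_1)\cap\dom(g_2)$. Sum the three-point inequalities for \eqref{eq:mpupd1}--\eqref{eq:mpupd2} over $k=0,\dots,K-1$. Use the update rules for $w^{s+1}$ and $\bar w^{s+1}$; the latter makes $(1-\alpha)\sum_k D(x,\bar w^{s})$ telescope into $(1-\alpha)\sum_{j}D(x,x^{s-1}_j)$. This gives, after taking conditional expectations,
\[
\Ex[\scrE^{s+1}(x)]+2\tau_s\,\Ex\Big[\sum_{k=0}^{K-1}\Psi^s_k(x)\Big]\le \Ex[\scrE^{s}(x)]+\Ex[\text{error}_s(x)],
\]
where $\Psi^s_k(x)=\inner{\opV_s(y^s_{k+1}),y^s_{k+1}-x}+G_s(y^s_{k+1})-G_s(x)$. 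The step-size condition $\tau_s\le\sqrt{1-\alpha}/(2L_s)$ absorbs the variance terms into the negative distance terms. There is one difficulty: the test point $x$ is later a supremum over $\scrU_2$ (respectively $\scrU_1\cap\scrS_2$), so it is random, and the martingale term $\tau_s\inner{\opV_s(y^s_{k+1})-\opA^s_{k+1},x-y^s_{k+1}}$ does not vanish in expectation. I handle it with the standard ghost-iterate trick. Introduce an auxiliary mirror sequence $u^s_{k+1}$ driven by the noise $\opV_s(y^s_{k+1})-\opA^s_{k+1}$, started at $x_0^0$. This produces the extra $\sup_{x\in\scrU_2}D(x,x_0^0)$ in $R^2$, and the remaining cost is bounded by the sums in Lemma \ref{lem:bounded_Bregman}(iii). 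That bound in turn contributes $\scrE^0(x^*)$ and $K\sum_s\tau_s h(\beta_sp^*)$.

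\textbf{Step 2: feasibility gap.} By monotonicity of $\opF_2$,
\[
\Psi_k^s(x)\ge H^{(\opF_2,g_2)}(y^s_{k+1},x)+\beta_s\big[\inner{\opF_1(y^s_{k+1}),y^s_{k+1}-x}+g_1(y^s_{k+1})-g_1(x)\big].
\]
The bracket is bounded below by $-C_{\scrU_2}$. To see this, note that the iterates are a.s. bounded by Lemma \ref{lem:bounded_Bregman}(i), since $\scrE^s(x^*)$ converges and $\distance$ is strongly convex; then use Lipschitz continuity of $\opF_1$ and the finite-variation Assumption \ref{ass:BVg} on the compact hull. Summing over $s$ and $k$, dividing by $KT_S$, and using convexity of $H^{(\opF_2,g_2)}(\cdot,x)$ (Jensen on $\hat x^S$), then taking the supremum over $\scrU_2$ and expectations, gives \eqref{eq:Bregman_theta_feas}. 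The constants come from dropping $\Ex[\scrE^S]\ge0$ and collecting the ghost-iterate bound.

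\textbf{Step 3: optimality gap.} Take $x\in\scrU_1\cap\scrS_2$ and divide the inequality by $\beta_s$. Monotonicity of $\opF_1$ yields $H^{(\opF_1,g_1)}(y^s_{k+1},x)$ plus $\beta_s^{-1}H^{(\opF_2,g_2)}(y^s_{k+1},x)$. The latter need not be nonnegative, and this is the main obstacle. I plan to write it as $\beta_s^{-1}[H^{(\opF_2,g_2)}(y,x)+\inner{\beta_sp^*,\cdot}]$ and invoke the definition of $\varphi^{(\opF_2,g_2)}$ together with $h$ (the Attouch--Czarnecki quantity), as in Lemma \ref{lem:boundsequence}, to bound it from below by $-\beta_s^{-1}\tau_s h(\beta_sp^*)$ up to a $D$-term. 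This explains the $\beta_S^{-2}$ factors once we use that $\beta_s$ is nonincreasing, so $\beta_s\ge\beta_S$. The varying weights $\tau_s/\beta_s$ versus $\tau_s$ require an Abel summation. The boundary terms $\scrE^s(x)/\beta_s$ are controlled via a.s. boundedness, yielding $C_{\scrU_1}/(\beta_SKT_S)$.

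\textbf{Step 4: lower bounds.} The lower bound $-B_{\scrU_1}\dist(\hat x^S,\scrS_2)$ is exactly \eqref{eq:LB1} of Lemma \ref{lem:LB}. Under weak sharpness, apply \eqref{eq:WS} with $\Theta_{\rm Feas}(\cdot\vert\scrU_2\cap\scrS_2)\le\Theta_{\rm Feas}(\cdot\vert\scrU_2)$. Then use Jensen for the concave map $t\mapsto t^{1/\rho}$ together with the feasibility bound from Step 2.
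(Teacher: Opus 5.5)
\textbf{Verdict: Steps 1, 2 and 4 are fine, but Step 3 rests on a false premise and a bound that does not hold.}

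Steps 1, 2 and 4 follow the paper's argument. Your ghost-iterate trick is what Lemma \ref{lem:nemirovski_Bregman} encapsulates, and your Jensen step for $t\mapsto t^{1/\rho}$ in Step 4 is, if anything, more careful than the paper.

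\textbf{The false premise.} The ``main obstacle'' you identify in Step 3 does not exist. For $x\in\scrS_2=\zer(\opF_2+\partial g_2)$ we have $-\opF_2(x)\in\partial g_2(x)$. The subgradient inequality then gives $H^{(\opF_2,g_2)}(y,x)=\inner{\opF_2(x),y-x}+g_2(y)-g_2(x)\geq 0$ for every $y$. By monotonicity, even $-H^{(\opF_2,g_2)}(x,y^s_{k+1})\geq 0$. The paper simply drops this term, exactly as in the Euclidean proof, where $\Psi_k(x)\geq\beta_k H^{(\opF_1,g_1)}(y_{k+1},x)$ for $x\in\scrS_2$.

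\textbf{Your remedy does not work as stated.} The functions $\varphi^{(\opF_2,g_2)}$ and $h$ only give the upper bound $H^{(\opF_2,g_2)}(z,y)+\inner{y,\beta_sp^*}\leq h(\beta_sp^*)+\beta_s\inner{p^*,x^*}$ for $z\in\scrS_2$. Equivalently, they give $-H^{(\opF_2,g_2)}(z,y)\geq \beta_s\inner{p^*,y-x^*}-h(\beta_sp^*)$. This has two problems:
\begin{itemize}
\item[(a)] It gives no lower bound on $H^{(\opF_2,g_2)}(y,x)$, the quantity you are left with after applying monotonicity, because monotonicity only gives $H^{(\opF_2,g_2)}(y,x)\leq -H^{(\opF_2,g_2)}(x,y)$.
\item[(b)] After dividing by $\beta_s$, it leaves a term $\inner{p^*,y^s_{k+1}-x^*}$ that neither vanishes nor cancels. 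In Lemma \ref{lem:boundsequence} that term is cancelled by the $\opF_1$-part via $-\opF_1(x^*)-p^*\in\partial g_1(x^*)$. Here the $\opF_1$-part is kept as the optimality gap, so nothing cancels it.
\end{itemize}

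\textbf{Where the $\beta_S^{-2}$ factors actually come from.} They come from the martingale term, not the lower level. After dividing by $\beta_s$, the noise enters with weight $\tau_s/\beta_s$. You therefore apply Lemma \ref{lem:nemirovski_Bregman} with $Z^s_{k+1}=\frac{\tau_s}{\beta_s}\bigl(\opA^s_{k+1}-\opV_s(y^s_{k+1})\bigr)$. Its second moment satisfies
\[
\Ex\norm{Z^s_{k+1}}_*^2\leq\frac{\tau_s^2L_s^2}{\beta_s^2}\Ex\norm{y^s_{k+1}-w^s}^2\leq\frac{1-\alpha}{4\beta_S^2}\Ex\norm{y^s_{k+1}-w^s}^2,
\]
using $\beta_s\geq\beta_S$ and the step-size rule. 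Lemma \ref{lem:bounded_Bregman}(iii) then turns the resulting sum into
\[
\frac{1}{2\beta_S^2}\Bigl(\scrE^0(x^*)+K\sum_{s}\tau_s h(\beta_sp^*)\Bigr).
\]
The $\beta_s^{-1}M_2$ terms are dropped because their conditional expectations are nonpositive. The Abel-summed terms $\beta_s^{-1}\scrE^s(x)$ give $C_{\scrU_1}/(\beta_SKT_S)$, as you say. With this correction Step 3 goes through.
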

To obtain more comprehensive rates on the feasibility and optimality gap, we consider the specific sequence $\tau_s=\bar{\tau}\leq \frac{\sqrt{1-\alpha}}{2L_{0}}, \, \beta_s=\frac{1}{(K(s+1))^\delta}$ with $\delta \in (0,1/2)$. Similarly to Theorem \ref{th:mainMonotone}, by choosing $\delta \rho^{*}>1$, we have 
$\sum_{s=0}^{S-1}h(\beta_{s}p^{*}) \leq \tilde{h}_{\rho}/K$ and $\sum_{s=0}^{S-1}\beta_s \leq \frac{K^{-\delta}}{1-\delta} \left(S^{1-\delta} + \delta\right)$. Thus,
\begin{align}
&\begin{aligned}
     \Ex[\Theta_{\rm Feas}(\hat{x}^S\vert\scrU_{2})] 
      \leq \frac{\sup_{x \in \scrU_2}\scrE^{0}(x)+\sup_{x \in \scrU_2}D(x,x_0^0)+\scrE^{0}(x^{*})}{\bar{\tau}KS} + \frac{4C_{\scrU_2}}{(KS)^{\delta}} +\frac{\tilde{h}_{\rho}}{2KS},
\end{aligned}\\
&\begin{aligned}
    \Ex[\Theta_{\rm Opt}(\hat{x}^S\vert\scrU_{1}\cap \scrS_2)] 
      \leq\frac{C_{\scrU_1}}{\bar{\tau}(KS)^{1-\delta}}     
      + \frac{\sup_{x \in \scrU_{1}\cap \scrS_2} D(x,x_0)}{\bar{\tau}KS}+  \frac{\scrE^{0}(x^{*})}{2\bar{\tau}  (KS)^{1-2\delta}}     
      +\frac{\tilde{h}_{\rho} }{2 (KS)^{1-2\delta}}.
\end{aligned}
\end{align}

%% file: Numerics.tex
%

In this section, we briefly present two numerical examples to illustrate our theoretical contribution, with further details given in Appendix~\ref{app:Num}.
In both examples, we consider a matrix game 
\begin{equation}\label{eq:equilibrium} \max_{y\in \Delta_m}\min_{x\in \Delta_n}x^\top My,\end{equation}
where for any $l\in\Z$, $\Delta_l$ is the unit simplex in dimension $l$.

\paragraph{Equilibrium Selection.}\label{sec:Numerics_selection}

The equilibrium selection problem seeks an equilibrium of \eqref{eq:equilibrium} that minimizes a certain objective $f(z)$ with $z=(x,y)$. Our choices of matrix $M$ and $f$, as well as the full details of the implementation and setup for this example, are given in Appendix~\ref{app:Num_Eq_Selection}. Figure~\ref{fig:eq_select_comparison}  summarizes the algorithms' performance in terms of the feasibility gap and distance from the optimal solution in the upper level. The solid lines indicate the performance of the ergodic average, while the dashed lines indicate the performance of the ``last iterate'' $w$. We note that the convergence of the last iterate is an interesting open problem; see \cite{azizian2021last} for a single-level analysis.
Observe that under this choice of parameters, the performances of Algorithm~\ref{alg:Euclid} and Algorithm~\ref{alg:MirrorProx} with $\ell_2$-norm are similar, with a slight advantage to Algorithm~\ref{alg:MirrorProx}, both outperforming the deterministic EG and Algorithm~\ref{alg:MirrorProx} with $\ell_1$-norm. For Algorithm~\ref{alg:MirrorProx} with $\ell_1$-norm, the ergodic average shows inferior performance to that of the EG. Surprisingly, all stochastic methods exhibit almost linear convergence of ``last iterate'' $w$. These results correspond with the results reported by \cite[Appendix E]{malitsky2020golden} for the Bregman case.
\begin{figure}
    \includegraphics[scale=0.45]{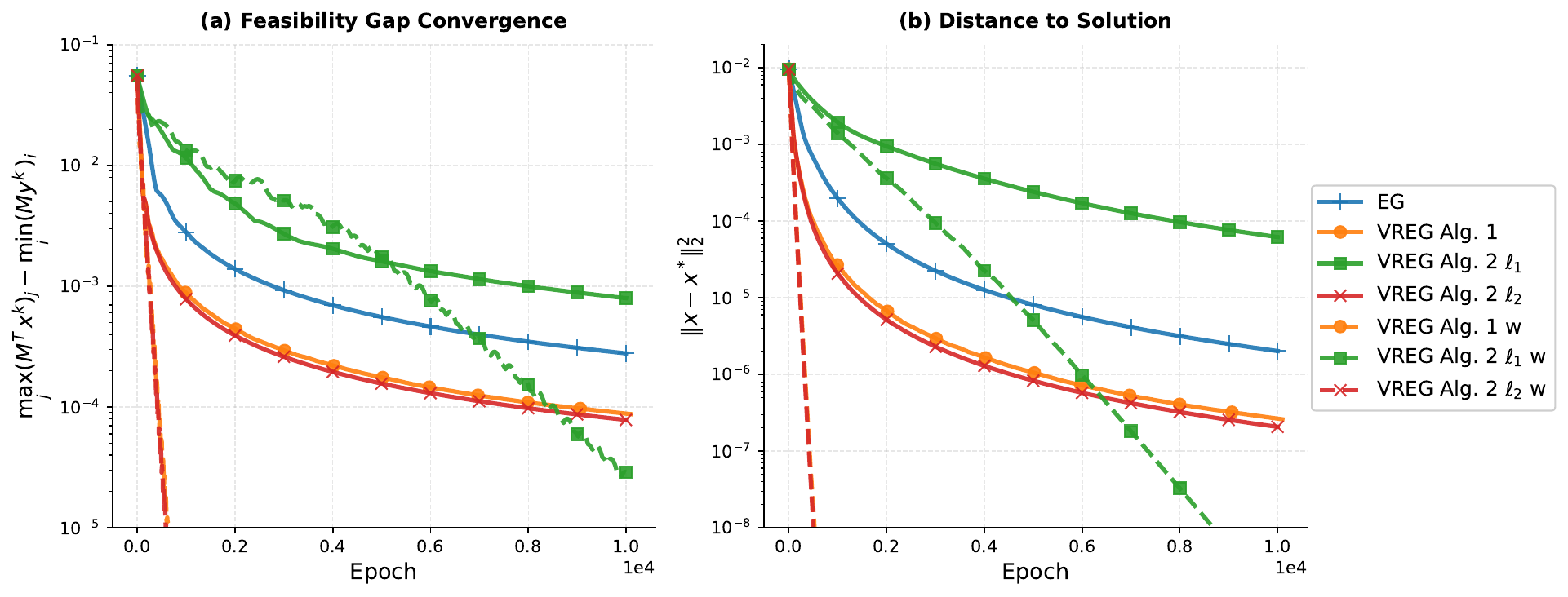}
    \caption{Performance in the Equilibrium Selection problem}
        \label{fig:eq_select_comparison}
\end{figure}

\paragraph{Linearly Constrained Equilibrium.}\label{sec:Numerics_constrained}

In this setting, we seek an equilibrium of \eqref{eq:equilibrium} that also satisfies 
a set of linear equalities given by $ Bx+Cy=d,$
 for some $B\in \R^{l\times n}$, $C\in\R^{l\times m}$ and $d\in \R^l$, with the linear constraint equivalently formulated as a minimization problem on the lower level. Our choices of the parameters, as well as the full details of the implementation and setup for this example, are given in Appendix~\ref{app:Num_Eq_lin_const}. Note that Algorithm~\ref{alg:Euclid} is not applicable since it requires, in this case, iteration-dependent randomization probabilities (cf. Example \ref{ex:sampling_prob}, item 2).

Figure~\ref{fig:eq_lin_const_comparison} compares the performance of EG and Algorithm~\ref{alg:MirrorProx} with $\ell_2$-norm in terms of optimality and feasibility gaps of their ergodic averages for $\delta \in \{0.01,0.4\}$. Algorithm~\ref{alg:MirrorProx} with $\ell_1$-norm is omitted from the plots as it underperformed relative to EG. Notably, Algorithm~\ref{alg:MirrorProx} outperforms EG for both parameter values, and decreasing $\delta$ from $0.4$ to $0.01$ yields a significant improvement in optimality while incurring only a negligible cost in feasibility, highlighting the sensitivity of the optimality-feasibility trade-off to this parameter. Moreover, for $\delta = 0.4$, the theoretically superior optimality gap convergence rate of EG is mitigated by its high iteration complexity, resulting in performance similar to Algorithm~\ref{alg:MirrorProx}. We additionally show performance of all Algorithms for $\delta=0.01$ in terms of both ergodic-average and "last-iterate" $w$ in Figure \ref{fig:minmax_all} in Appendix \ref{app:Num_Eq_lin_const}.

\begin{figure}
\includegraphics[scale=0.45]{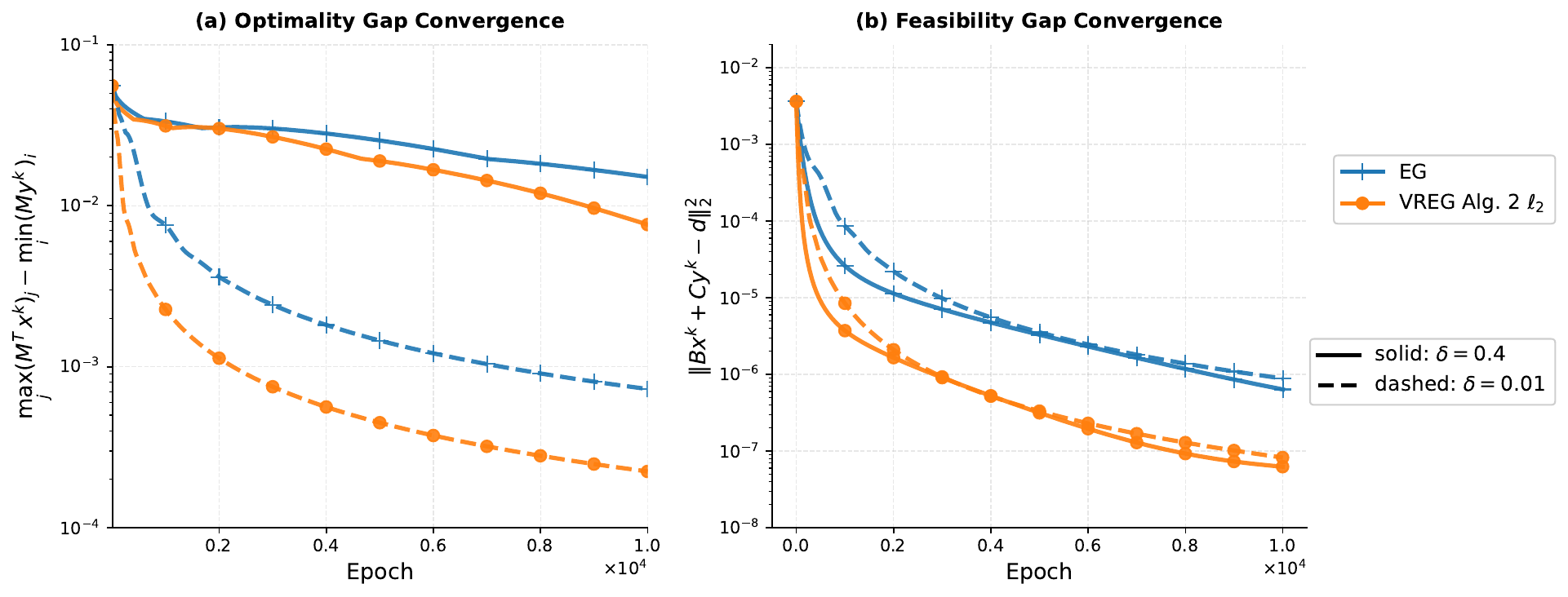}
\caption{Performance in the Linearly Constrained Equilibrium problem}
\label{fig:eq_lin_const_comparison}
\end{figure}

%% file: App_General.tex
%

\begin{lemma}[\cite{Robbins:1971aa}, Theorem 1]
\label{lem:RS}
Let $\filterF=(\scrF_{k})_{k\in\N}$ be a filtration. Let $(\scrE_{k})_{k}\in\ell_{+}(\filterF),(\gamma_{k})_{k}\in\ell^{1}_{+}(\filterF),(D_{k})_{k}\in\ell_{+}(\filterF)$ and $(\eta_{k})_{k}\in\ell^{1}_{+}(\filterF)$ be such that 
$$
\Ex(\scrE_{k+1}\vert\scrF_{k})+D_{k}\leq(1+\gamma_{k})\scrE_{k}+\eta_{k}\quad\forall k\in\N,\Pr-\text{a.s.} 
$$
Then $(D_{k})_{k\in\N}\in\ell^{1}_{+}(\filterF)$ and $(\scrE_{k})_{k\in\N}$ converges $\Pr$-a.s. to a $[0,\infty)$-valued random variable. 
\end{lemma}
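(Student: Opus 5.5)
This is the Robbins–Siegmund lemma, which the paper quotes from \cite{Robbins:1971aa}. I would reproduce the classical supermartingale argument: first normalize away the multiplicative factor, then show the remaining drift is summable, and finish with supermartingale convergence plus a localization step. All processes are adapted to $\filterF$ and nonnegative, and $\ell^{1}_{+}(\filterF)$ means almost surely summable.

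\textbf{Step 1: normalize.} Set $\pi_{k}\eqdef\prod_{j=0}^{k-1}(1+\gamma_{j})^{-1}$. This is $\scrF_{k-1}$-measurable, nonincreasing, and converges a.s. to some $\pi_{\infty}>0$, because $\sum_{j}\gamma_{j}<\infty$ a.s. Define
\[
\scrE'_{k}\eqdef\pi_{k}\scrE_{k},\qquad D'_{k}\eqdef\pi_{k+1}D_{k},\qquad \eta'_{k}\eqdef\pi_{k+1}\eta_{k}.
\]
Multiplying the hypothesis by $\pi_{k+1}$, which is $\scrF_{k}$-measurable, gives
\[
\Ex(\scrE'_{k+1}\vert\scrF_{k})+D'_{k}\leq\scrE'_{k}+\eta'_{k}.
\]
Moreover $\sum_{k}\eta'_{k}\leq\sum_{k}\eta_{k}<\infty$ a.s.

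\textbf{Step 2: remove the drift.} Let
\[
U_{k}\eqdef\scrE'_{k}+\sum_{j<k}D'_{j}-\sum_{j<k}\eta'_{j}.
\]
Then $\Ex(U_{k+1}\vert\scrF_{k})\leq U_{k}$, but $U_{k}$ need not be bounded below or integrable. I would localize with the stopping times $\sigma_{m}\eqdef\inf\{k:\sum_{j\leq k}\eta'_{j}>m\}$. These are stopping times because $\eta'_{k}$ is $\scrF_{k}$-measurable. The stopped process $U_{k\wedge\sigma_{m}}$ is a supermartingale bounded below by $-m$. If integrability of $\scrE_{0}$ is an issue, I would also restrict to the events $\{\scrE_{0}\leq m\}\in\scrF_{0}$. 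A supermartingale bounded below converges a.s. to a finite limit. Since $\sum_{k}\eta'_{k}<\infty$ a.s., we have $\bigcup_{m}\{\sigma_{m}=\infty\}=\Omega$ up to a null set, so $U_{k}$ converges a.s. to a finite limit.

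\textbf{Step 3: conclude.} Because $\sum_{j}\eta'_{j}$ converges a.s., $\scrE'_{k}+\sum_{j<k}D'_{j}$ also converges a.s. to a finite limit. Both summands are nonnegative, and the second is nondecreasing in $k$. Hence $\sum_{j}D'_{j}<\infty$ a.s., and so $\scrE'_{k}$ converges a.s. Dividing by $\pi_{k}\to\pi_{\infty}>0$ shows that $\scrE_{k}$ converges a.s. to a $[0,\infty)$-valued limit. Also $D_{k}\leq D'_{k}/\pi_{\infty}$, which gives $(D_{k})_{k}\in\ell^{1}_{+}(\filterF)$.

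\textbf{Main obstacle.} The delicate point is the localization in Step 2. The data $\gamma_{k},\eta_{k}$ are only almost surely summable rather than summable in expectation, so the stopping times and the restriction to $\{\scrE_{0}\leq m\}$ must be set up carefully. This ensures that the stopped processes are genuine integrable supermartingales bounded below, to which Doob's convergence theorem applies.
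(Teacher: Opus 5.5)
Your proof is correct, and it is essentially the classical Robbins--Siegmund argument from the cited reference; the paper gives no proof of its own and only quotes Theorem 1 there. The steps match that argument: normalize by $\prod_{j<k}(1+\gamma_j)^{-1}$, subtract the cumulative drift to obtain a supermartingale, stop at the first time $\sum_{j\le k}\eta'_j$ exceeds $m$ so the stopped process is bounded below by $-m$, and apply Doob's convergence theorem. Your additional restriction to $\{\scrE_0\le m\}$ correctly handles integrability when $\scrE_0$ is not integrable.
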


We repeatedly employ the following classical Lemma, due to \cite{RobustSA}.
\begin{lemma}\label{lem:nemirovski}
Let $\filterF=(\scrF_{k})_{k\in\N}$ be a filtration and $(Z_{k})_{k\in\N}$ an $\filterF$-adapted stochastic process with values in $\scrX$ and $\Ex[Z_{k+1}\vert\scrF_{k}]=0$. Then, for any $K\geq 1,x_{0}\in\scrX$ and any compact set $\scrC\subset\scrX$, 
$$
\Ex\left[\max_{x\in\scrC}\sum_{k=0}^{K-1}\inner{Z_{k+1},x}\right]\leq\max_{x\in\scrC}\frac{1}{2}\norm{x_{0}-x}^{2}+\frac{1}{2}\sum_{k=0}^{K-1}\Ex[\norm{Z_{k+1}}^{2}]. 
$$
\end{lemma}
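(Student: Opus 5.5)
The statement to prove is Lemma~\ref{lem:nemirovski}. I will use the classical "ghost iterate" argument. Define an auxiliary deterministic-in-structure sequence $v_{0}\eqdef x_{0}$, $v_{k+1}\eqdef v_{k}-Z_{k+1}$. This is an online mirror/gradient step with unit step size on the linear losses $\inner{Z_{k+1},\cdot}$. By construction $v_{k}$ is $\scrF_{k}$-measurable, since it depends only on $Z_{1},\ldots,Z_{k}$.

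For any fixed $x\in\scrX$, expand the square:
\[
\norm{v_{k+1}-x}^{2}=\norm{v_{k}-x}^{2}-2\inner{Z_{k+1},v_{k}-x}+\norm{Z_{k+1}}^{2}.
\]
Rearranging gives
\[
\inner{Z_{k+1},x}=\tfrac12\norm{v_{k+1}-x}^{2}-\tfrac12\norm{v_{k}-x}^{2}-\tfrac12\norm{Z_{k+1}}^{2}+\inner{Z_{k+1},v_{k}}.
\]
Sum over $k=0,\ldots,K-1$ and drop the nonnegative term $\frac12\norm{v_{K}-x}^{2}$, which enters with a favorable sign after telescoping. This yields
\[
\sum_{k=0}^{K-1}\inner{Z_{k+1},x}\leq \tfrac12\norm{x_{0}-x}^{2}+\sum_{k=0}^{K-1}\inner{Z_{k+1},v_{k}}+\tfrac12\sum_{k=0}^{K-1}\norm{Z_{k+1}}^{2},
\]
up to checking signs: with $v_{k+1}=v_k-Z_{k+1}$ one gets $-\inner{Z_{k+1},v_k-x}$ terms, so I would choose the sign of the update so that the telescoping bound has $+\frac12\norm{Z_{k+1}}^2$. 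Concretely, $\inner{Z_{k+1},x-v_k}\le \frac12\norm{v_k-x}^2-\frac12\norm{v_{k+1}-x}^2+\frac12\norm{Z_{k+1}}^2$.

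The key point is that the right-hand side depends on $x$ only through $\frac12\norm{x_{0}-x}^{2}$. Hence taking $\max_{x\in\scrC}$, which is finite by compactness, gives
\[
\max_{x\in\scrC}\sum_{k}\inner{Z_{k+1},x}\leq\max_{x\in\scrC}\tfrac12\norm{x_{0}-x}^{2}+\sum_{k}\inner{Z_{k+1},v_{k}}+\tfrac12\sum_{k}\norm{Z_{k+1}}^{2}.
\]

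Now take expectations. By the tower property, $\Ex[\inner{Z_{k+1},v_{k}}]=\Ex[\inner{\Ex[Z_{k+1}\vert\scrF_{k}],v_{k}}]=0$, because $v_{k}$ is $\scrF_{k}$-measurable. The claim follows.

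The only delicate step is justifying this conditional expectation, which requires integrability of $\inner{Z_{k+1},v_{k}}$. If $\sum_k\Ex\norm{Z_{k+1}}^{2}=\infty$, the bound is trivial. Otherwise each $Z_{k}\in L^{2}$, so $v_{k}\in L^{2}$, and Cauchy–Schwarz gives integrability.

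Measurability of the maximum over the compact set $\scrC$ follows from continuity in $x$, taking the max over a countable dense subset.
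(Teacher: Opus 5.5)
Your route is the classical auxiliary-sequence argument behind this result in \cite{RobustSA}. The paper gives no proof of its own and only cites that reference. Your handling of the measurability of the maximum, the integrability of $\inner{Z_{k+1},v_k}$ via Cauchy--Schwarz, and the tower-property step $\Ex[\inner{Z_{k+1},v_k}]=0$ is fine.

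The one real defect is the sign of the auxiliary update, which you flag (``up to checking signs'') but never fix. With $v_{k+1}=v_k-Z_{k+1}$ as you defined it, your own (correct) identity telescopes to $\sum_{k}\inner{Z_{k+1},x}=\tfrac12\norm{v_K-x}^2-\tfrac12\norm{x_0-x}^2-\tfrac12\sum_k\norm{Z_{k+1}}^2+\sum_k\inner{Z_{k+1},v_k}$. Here $\tfrac12\norm{v_K-x}^2$ enters with the unfavorable sign and cannot be dropped. Your ``concrete'' inequality also collapses, for this update, to $\inner{Z_{k+1},x-v_k}\le 0$, which is false in general. The underlying slip is that a gradient step on the losses $\inner{Z_{k+1},\cdot}$ controls $\sum_k\inner{Z_{k+1},v_k-x}$, which is the opposite quantity to the one you need.

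The fix is to define $v_{k+1}\eqdef v_k+Z_{k+1}$ instead. Then $\inner{Z_{k+1},x-v_k}=\tfrac12\norm{v_k-x}^2-\tfrac12\norm{v_{k+1}-x}^2+\tfrac12\norm{Z_{k+1}}^2$ holds with equality. Moreover, $v_k$ is still $\scrF_k$-measurable, and it is square integrable whenever the right-hand side is finite. The rest of your proof then goes through verbatim and gives exactly the stated bound.
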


In order to prove the convergence rate in the Bregman case, we need the following version of Lemma \ref{lem:nemirovski}, due to \cite{alacaoglu2022stochastic}. 
\begin{lemma}
\label{lem:nemirovski_Bregman}
    Let $\filterF=(\scrF_{k}^{s})_{s\geq 0, k\in[0,K-1]}$ be a filtration and $(Z_{k}^{s})_{s\geq 0, k\in[0,K-1]}$ an $\filterF$-adapted stochastic process with values in $\scrX$ and $\Ex[Z_{k+1}^{s}\vert\scrF_{k}^{s}]=0$. Then, for any $S\geq 0,x_{0}\in\scrX$ and any compact set $\scrC\subset\scrX$, 
$$
\Ex\left[\max_{x\in\scrC}\sum_{s=0}^{S-1}\sum_{k=0}^{K-1}\inner{Z_{k+1}^{s},x}\right]\leq\max_{x\in\scrC}D(x,x_{0})+\frac{1}{2}\sum_{s=0}^{S-1}\sum_{k=0}^{K-1}\Ex[\norm{Z_{k+1}^{s}}^{2}_{*}]. 
$$
\end{lemma}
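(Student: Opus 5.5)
The statement is Lemma \ref{lem:nemirovski_Bregman}, a Bregman version of Lemma \ref{lem:nemirovski}. I will prove it with the standard "ghost iterate" (auxiliary mirror descent) argument, as in the Euclidean case.

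\textbf{Ghost sequence.} Index the pairs $(s,k)$ lexicographically, so the process becomes a single martingale-difference sequence $Z_{t}$, $t=1,\dots,SK$, with respect to the lexicographically ordered filtration. Define auxiliary iterates $u_{0}=x_{0}$ and
\[
u_{t}=\argmin_{u\in\scrX}\{-\inner{Z_{t},u}+D(u,u_{t-1})\}.
\]
This is a mirror step with unit step size along the direction $-Z_{t}$. Each $u_{t-1}$ is measurable with respect to the $\sigma$-algebra preceding $Z_{t}$, because it is built only from $Z_{1},\dots,Z_{t-1}$.

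\textbf{One-step estimate.} The three-point identity for Bregman divergences gives, for any $x\in\scrC$,
\[
\inner{Z_{t},x-u_{t-1}}\leq D(x,u_{t-1})-D(x,u_{t})+\tfrac{1}{2}\norm{Z_{t}}_{*}^{2}.
\]
The last term comes from 1-strong convexity of $\distance$: combine the prox optimality inequality at $u_{t}$ with Young's inequality $\inner{Z_{t},u_{t}-u_{t-1}}-D(u_{t},u_{t-1})\leq\frac12\norm{Z_{t}}_*^2$.

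\textbf{Summation and expectation.} Summing over $t$ telescopes the divergences, and $D(x,u_{SK})\geq 0$. This yields
\[
\sum_{t}\inner{Z_{t},x}\leq D(x,x_{0})+\tfrac12\sum_{t}\norm{Z_{t}}_*^2+\sum_t\inner{Z_t,u_{t-1}}.
\]
The right-hand side depends on $x$ only through $D(x,x_0)$, so I take the maximum over the compact set $\scrC$; the maximum is finite because $D(\cdot,x_{0})$ is continuous there. Taking expectations, the cross terms vanish by the tower property: $\Ex[\inner{Z_{t},u_{t-1}}]=\Ex[\inner{\Ex[Z_{t}\vert\scrF_{t-1}],u_{t-1}}]=0$. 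What remains is the claimed bound.

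\textbf{Main obstacle.} The only delicate points are technical. First, I must check well-posedness of the mirror step, namely that $u_{t}\in\dom(\partial\distance)$ so that $D(\cdot,u_t)$ is defined via the continuous selection $\distance'$. This follows from strong convexity together with the standard convention assumed for $\distance$. Second, I must make sure the cross terms are integrable. If $\Ex\norm{Z_t}_*^2=\infty$ the bound is trivial. Otherwise I need $u_{t-1}$ to have finite second moment, which follows inductively from the mirror step being nonexpansive-type in the sense $\norm{u_t-u_{t-1}}\leq\norm{Z_t}_*$.
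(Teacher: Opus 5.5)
Your proposal is correct and follows essentially the same route as the paper: the paper gives no proof of its own but attributes the lemma to \cite{alacaoglu2022stochastic}, whose argument is exactly this auxiliary (ghost) mirror-descent sequence $u_t=\argmin_u\{-\inner{Z_t,u}+D(u,u_{t-1})\}$, combined with the Nemirovski-type one-step bound, telescoping, and the vanishing of the cross terms $\Ex[\inner{Z_t,u_{t-1}}]$ by the tower property. Your lexicographic flattening of the double index and your integrability check via $\norm{u_t-u_{t-1}}\leq\norm{Z_t}_*$ are sound ways to make rigorous the paper's somewhat loosely stated filtration and measurability assumptions.
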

The next Lemma is a classical results for Bregman-based algorithms. 

\begin{lemma}\label{lem:3point}
   Let $\distance$ be a distance generating function, inducing the Bregman divergence $D$. We have
    \begin{equation}
       \begin{aligned}
            \inner{\nabla \distance(x) - \nabla \distance(y),z-x} = D(z,y) - D(z,x) - D(x,y)\quad \forall x,y,z \in \scrX
       \end{aligned}
    \end{equation}
\end{lemma}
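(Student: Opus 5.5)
The three-point identity of Lemma \ref{lem:3point} is a purely algebraic statement, so I will prove it by direct expansion of the Bregman divergences from their definition. I take $\nabla\distance$ to mean the continuous selection $\distance'$ introduced above, so that $D(u,v)=\distance(u)-\distance(v)-\inner{\nabla\distance(v),u-v}$ for all admissible points. No convexity or other property of $\distance$ is needed beyond this formula.

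The steps are as follows.
\begin{enumerate}
\item Write out the three terms on the right-hand side:
\begin{align*}
D(z,y)&=\distance(z)-\distance(y)-\inner{\nabla\distance(y),z-y},\\
D(z,x)&=\distance(z)-\distance(x)-\inner{\nabla\distance(x),z-x},\\
D(x,y)&=\distance(x)-\distance(y)-\inner{\nabla\distance(y),x-y}.
\end{align*}
\item Form $D(z,y)-D(z,x)-D(x,y)$. The function values cancel completely: the $\distance(z)$ terms from the first two lines cancel, and $-\distance(y)+\distance(x)-\distance(x)+\distance(y)=0$.
\item Collect the remaining inner products:
\[
-\inner{\nabla\distance(y),z-y}+\inner{\nabla\distance(x),z-x}+\inner{\nabla\distance(y),x-y}.
\]
Combine the two terms involving $\nabla\distance(y)$ using $(x-y)-(z-y)=x-z$. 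This gives $\inner{\nabla\distance(y),x-z}+\inner{\nabla\distance(x),z-x}=\inner{\nabla\distance(x)-\nabla\distance(y),z-x}$, which is exactly the left-hand side.
\end{enumerate}

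The only point needing care is the domain. The identity is meaningful when $x,y\in\dom(\partial\distance)$, so that $\nabla\distance(x)$ and $\nabla\distance(y)$ are defined, and when $z\in\dom\distance$, so that the values are finite. The statement's ``$\forall x,y,z\in\scrX$'' should be read this way, which is how the lemma is applied to the iterates of Algorithm \ref{alg:MirrorProx}. I do not expect any step to be a real obstacle; the whole proof is bookkeeping of signs in step 3.
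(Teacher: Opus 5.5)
Your direct expansion is correct. The function values cancel, and the three inner products combine to $\inner{\nabla\distance(x)-\nabla\distance(y),z-x}$, exactly as you describe. The paper gives no proof and simply cites the identity as classical; your argument is the standard one, and your caveat that $x,y\in\dom(\partial\distance)$ and $z\in\dom\distance$ is the right way to read the statement.
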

Also we have for the update steps in the algorithm the following prox-like inequality:
\begin{lemma}\label{lem:bregmanproxineq}
    Let $ g$ be a proper convex and lower semi-continuous function and 
    \begin{align*}
        x^+ = \argmin_z \left\{g(z) + \inner{v,z} + \alpha D(z,x_1)+ (1-\alpha) D(z,x_2)\right\}.
    \end{align*}
    Then for any $ x \in \scrX$ the following inequality holds:  
    \begin{align*}
        g(x) - g(x^+) + \inner{v,x-x^+} \geq &\ D(x,x^+) + \alpha (D(x^+,x_1) - D(x,x_1))\\
        &\quad+(1-\alpha)(D(x^+,x_2) + (1-\alpha)D(x,x_2))
    \end{align*}
\end{lemma}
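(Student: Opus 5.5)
The plan is to derive the inequality directly from the first-order optimality condition of the strongly convex subproblem defining $x^{+}$, and then convert the resulting linear terms into Bregman divergences with the three-point identity of Lemma \ref{lem:3point}. I read the last term of the claimed bound as $(1-\alpha)\big(D(x^{+},x_{2})-D(x,x_{2})\big)$, which is the form the computation below produces and the one consistent with the $\alpha$-term. The coefficient $(1-\alpha)$ in front of $D(x,x_{2})$ inside the bracket appears to be a typo.

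\emph{Step 1 (optimality condition).} The objective $\Phi(z)\eqdef g(z)+\inner{v,z}+\alpha D(z,x_{1})+(1-\alpha)D(z,x_{2})$ is proper, lower semicontinuous, and $1$-strongly convex, since $D(\cdot,x_i)$ differs from $\distance$ by an affine function. Hence $x^{+}$ exists and is unique. By the subdifferential sum rule,
\[
0\in\partial g(x^{+})+v+\partial\distance(x^{+})-\alpha\distance'(x_{1})-(1-\alpha)\distance'(x_{2}).
\]
The sum rule applies because $\distance$ is continuous, so it can serve as the qualifying summand. This gives some $\zeta\in\partial\distance(x^{+})$, which I take to be the continuous selection $\distance'(x^{+})$, consistent with the paper's convention, such that
\[
-v-\big(\distance'(x^{+})-\alpha\distance'(x_{1})-(1-\alpha)\distance'(x_{2})\big)\in\partial g(x^{+}).
\]
I write $\distance'(x^{+})-\alpha\distance'(x_1)-(1-\alpha)\distance'(x_2)$ as $\alpha(\distance'(x^{+})-\distance'(x_{1}))+(1-\alpha)(\distance'(x^{+})-\distance'(x_{2}))$.

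\emph{Step 2 (subgradient inequality).} For every $x$, the subgradient inequality for $g$ at $x^{+}$ yields
\[
g(x)-g(x^{+})+\inner{v,x-x^{+}}\geq-\alpha\inner{\distance'(x^{+})-\distance'(x_{1}),x-x^{+}}-(1-\alpha)\inner{\distance'(x^{+})-\distance'(x_{2}),x-x^{+}}.
\]

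\emph{Step 3 (three-point identity).} Lemma \ref{lem:3point}, applied with $(x,y,z)=(x^{+},x_{i},x)$, gives
\[
\inner{\distance'(x^{+})-\distance'(x_{i}),x-x^{+}}=D(x,x_{i})-D(x,x^{+})-D(x^{+},x_{i}).
\]
Substituting this for $i=1,2$ and using $\alpha+(1-\alpha)=1$ to collect the two $D(x,x^{+})$ terms gives exactly
\[
D(x,x^{+})+\alpha\big(D(x^{+},x_{1})-D(x,x_{1})\big)+(1-\alpha)\big(D(x^{+},x_{2})-D(x,x_{2})\big).
\]
For $x\notin\dom g$ the inequality is trivial.

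\emph{Main obstacle.} The only delicate point is Step 1. The sum rule and the identification of the subgradient of $\distance$ at $x^{+}$ with the selection $\distance'(x^{+})$ both need $x^{+}\in\dom(\partial\distance)$. This is automatic because $x^{+}$ minimizes a function containing $\distance$. Lemma \ref{lem:3point} must also be used with that same selection, so that the identity and the optimality condition refer to the same element of $\partial\distance(x^{+})$. Apart from this, the argument is a direct computation.
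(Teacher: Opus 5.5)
Your proof is correct and follows the paper's argument: the optimality condition for $x^{+}$, the subgradient inequality for $g$, and Lemma \ref{lem:3point} applied once with $x_1$ and once with $x_2$. Your reading of the last term as $(1-\alpha)\big(D(x^{+},x_{2})-D(x,x_{2})\big)$ is also right, since that is what the computation gives and how the lemma is used in \eqref{eq:proxyupd}--\eqref{eq:proxxupd}; the literal version already fails for $g=0$, $v=0$, $x_1=x_2$.

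The one step you assert rather than prove is taking the subgradient $\zeta\in\partial\distance(x^{+})$ from the optimality condition to be $\distance'(x^{+})$. The paper glosses over this too, since it simply writes $\nabla\distance$. The step is harmless: continuity of the selection forces $\zeta-\distance'(x^{+})$ into the normal cone of $\dom\distance$ at $x^{+}$. Using $\zeta$ in place of $\distance'(x^{+})$ therefore only adds the nonnegative term $\inner{\zeta-\distance'(x^{+}),x^{+}-x}$ for $x\in\dom\distance$.
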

\begin{proof}
    By the optimality condition we have for $ x^+$
    \begin{align*}
        0 \in \partial g(x^+) + v + \alpha (\nabla \distance(x^+) - \nabla \distance(x_1)) + (1-\alpha) (\nabla \distance(x^+) - \nabla \distance(x_2)).
    \end{align*}
By the subgradient inequality for $ g$, we get
\begin{align*}
    g(x) - g(x^+) \geq \inner{v + \alpha(\nabla \distance(x^+) - \nabla \distance(x_1)) + (1-\alpha)(\nabla \distance(x^+) - \nabla \distance(x_2)), x^+ - x}.
\end{align*}
The result follows then from applying the three point identity Lemma \ref{lem:3point} twice to the gradient terms.
\end{proof}

%% file: App_Fitzpatrick.tex
%
\subsection{Fitzpatrick and Gap functions} 
Let $\opM:\scrZ\to 2^{\scrZ}$ be a maximally monotone operator. The \emph{Fitzpatrick function} \citep{fitzpatrick1988representing,BauCom16} $\scrF_{\opM}:\scrZ\to(-\infty,+\infty]$,  associated with the operator $\opM$, is defined as 
$$
\scrF_{\opM}(x,u)\eqdef \sup_{(y,v)\in\gr(\opM)}\{\inner{x,v}+\inner{y,u}-\inner{y,v}\}.
$$
Following \cite{Borwein:2016aa}, we define the gap function $\gap_{\opM}(x)\eqdef \scrF_{\opM}(x,0)$. $\gap_{\opM}$ is convex, and in fact the smallest translation invariant gap function associated with the monotone operator $\opM$ \cite[Theorem 3.1]{Borwein:2016aa}. Importantly, this gives the properties $\gap_{\opM}(x)\geq 0$ and $\gap_{\opM}(x)=0$ if and only if $x\in\zer(\opM)$. To make this concept concrete, observe that if $\opM=\opF+\NC_{\scrC}$, then the above definition of the gap function reduces to the well-known Auslender dual gap function \citep{FacPan03,auslender1976optimisation}
$$
\gap_{\opF+\NC_{\scrC}}(x)=\sup_{y\in\scrC}\inner{\opF(y),x-y}.
$$
If $\opM=\opF+\partial g$ for a function $g\in\Gamma_{0}(\scrZ)$, we easily obtain 
$$
\gap_{\opF+\partial g}(x)\leq\sup_{y\in\dom(g)}\inner{\opF(y),x-y}+g(x)-g(y). 
$$
To the data $(\opF,g)$, we thus associate the bifunction $H^{(\opF,g)}:\scrZ\times\scrZ\to[-\infty,\infty]$ defined as 
$$
H^{(\opF,g)}(x,y)\eqdef \inner{\opF(y),x-y}+g(x)-g(y).
$$
If $\opF:\scrZ\to\scrZ$ is monotone and continuous on $\dom(g)$, it is easy to verify that $H^{(\opF,g)}(x,y)\leq -H^{(\opF,g)}(y,x)$ for all $(x,y)\in \dom(g)\times\dom(g)$ (i.e. $H^{(\opF,g)}$ is a monotone bifunction \cite{iusem2011maximal}). In the structured setting $\HVI(\opF,g)$, we obtain the following bounds on the Fitzpatrick function, showing its close connection to the bifunction $H^{(\opF,g)}$ and the Auslender dual gap function. First, the convex subgradient inequality yields the relation
\begin{align*}
\scrF_{\opF+\partial g}(x,u)&=\sup_{y\in\dom(\partial g),\xi\in\partial g(y)}\{\inner{x-y,\opF(y)}+\inner{\xi,x-y}+\inner{y,u}\}\\
&\leq \sup_{y\in\dom(g)}\{\inner{\opF(y),x-y}+g(x)-g(y)+\inner{y,u}\}\\
&=\sup_{y\in\dom(g)} \{H^{(\opF,g)}(x,y)+\inner{y,u}\} \eqdef\varphi^{(\opF,g)}(x,u).
\end{align*}
The function $\varphi^{(\opF,g)}(x,u)$ is thus seen as the Fitzpatrick transform of $H^{(\opF,g)}$. Second, since the function $x\mapsto H^{(\opF,g)}(x,y)$ is convex, we see 
\begin{equation}\label{eq:dualphi}
\varphi^{(\opF,g)}(x,u)\leq\sup_{y\in\dom(g)}\{\inner{y,u}-H^{(\opF,g)}(y,x)\}=\left(H^{(\opF,g)}(\bullet,x)\right)^{\ast}(u). 
    \end{equation}
In particular, 
\begin{align}
&\varphi^{(\opF,g)}(x,u)\geq\inner{x,u} \qquad\forall x\in\dom(g), \label{eq:FPlower}\\ 
&\scrF_{\opF+\partial g}(x,0)\leq \sup_{y\in\dom(g)}H^{(\opF,g)}(x,y)=\varphi^{(\opF,g)}(x,0). \label{eq:FPgap}
\end{align}
Following this notation, we define for a given a subset $\scrC\subseteq\scrZ$, the restricted dual gap function as 
\begin{equation}\label{eq:gap_Def}
    \Theta(x\vert\opF,g,\scrC)\eqdef \sup_{y\in\scrC}H^{(\opF,g)}(x,y).
\end{equation}
The role of such localized gap functions is made clear in the next lemma, whose proof can be found in \cite{dvurechensky2025extragradient}.
\begin{lemma}\label{lem:gap}
    Let $\scrC\subset\dom(g)$  be a nonempty compact convex set. Consider problem $\HVI(\opF,g)$ with $\opF:\scrZ\to\scrZ$ monotone and Lipschitz continuous. The function $x\mapsto \Theta(x\vert\opF,g,\scrC)$ is well-defined and convex on $\scrZ$. For any $x\in\scrC$ we have $\Theta(x\vert \opF,g,\scrC)\geq 0.$ Moreover, if $x\in\scrC$ is a solution to $\HVI(\opF,g)$, then $\Theta(x\vert\opF,g,\scrC)=0$. Conversely, if $\Theta(x\vert\opF,g,\scrC)=0$ for some $x\in\scrC$ for which there exists an $\eps>0$ such that $\ball(x,\eps)\cap\scrC=\ball(x,\eps)\cap\dom(g)$, then $x$ is a solution of $\HVI(\opF,g)$.   
\end{lemma}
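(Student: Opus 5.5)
The plan is to verify the four claims of Lemma \ref{lem:gap} in order. All of them follow from the structure of the bifunction $H^{(\opF,g)}(x,y)=\inner{\opF(y),x-y}+g(x)-g(y)$ together with monotonicity and continuity of $\opF$.

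\emph{Well-definedness and convexity.} Fix $x$. The map $y\mapsto \inner{\opF(y),x-y}$ is continuous, since $\opF$ is Lipschitz. The map $y\mapsto -g(y)$ is upper semicontinuous because $g$ is lower semicontinuous. Moreover, $g$ is finite on $\scrC\subset\dom(g)$, and a proper lsc convex function is bounded below on the compact set $\scrC$. Hence $y\mapsto H^{(\opF,g)}(x,y)$ is upper semicontinuous on $\scrC$ and bounded above there. Its supremum is therefore attained, and $\Theta(x\vert\opF,g,\scrC)$ is finite for $x\in\dom(g)$ and equal to $+\infty$ otherwise. For each fixed $y$, the map $x\mapsto H^{(\opF,g)}(x,y)$ is affine plus the convex function $g$. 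So $\Theta(\cdot\vert\opF,g,\scrC)$ is a pointwise supremum of convex functions, and hence convex.

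\emph{Nonnegativity and vanishing at solutions.} For $x\in\scrC$, choosing $y=x$ gives $H^{(\opF,g)}(x,x)=0$, so $\Theta(x\vert\opF,g,\scrC)\ge 0$. Now let $x\in\scrC$ solve $\HVI(\opF,g)$, that is, $\inner{\opF(x),y-x}+g(y)-g(x)\ge 0$ for all $y$. Monotonicity gives $\inner{\opF(y),y-x}\ge\inner{\opF(x),y-x}$. Therefore
\[
H^{(\opF,g)}(x,y)=-\inner{\opF(y),y-x}-g(y)+g(x)\le -\bigl[\inner{\opF(x),y-x}+g(y)-g(x)\bigr]\le 0
\]
for all $y\in\scrC$. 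Hence $\Theta(x\vert\opF,g,\scrC)\le 0$, and combined with nonnegativity it equals $0$.

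\emph{Converse (Minty argument).} This is the only step that uses the local condition $\ball(x,\eps)\cap\scrC=\ball(x,\eps)\cap\dom(g)$, and I expect it to be the main point needing care. Suppose $\Theta(x\vert\opF,g,\scrC)=0$, so that $\inner{\opF(y),y-x}+g(y)-g(x)\ge 0$ for every $y\in\scrC$. Fix any $u\in\dom(g)$; the case $u\notin\dom(g)$ is trivial since then $g(u)=+\infty$. Set $y_t=x+t(u-x)$. Since $\dom(g)$ is convex, $y_t\in\dom(g)$, and for small $t>0$ also $y_t\in\ball(x,\eps)$. The local condition then gives $y_t\in\scrC$. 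By convexity, $g(y_t)-g(x)\le t\,(g(u)-g(x))$. Inserting this into the inequality at $y=y_t$ yields
\[
t\inner{\opF(y_t),u-x}+t\,(g(u)-g(x))\ge 0 .
\]
Dividing by $t$ and letting $t\downarrow 0$, continuity of $\opF$ gives $\inner{\opF(x),u-x}+g(u)-g(x)\ge 0$. Since $u$ was arbitrary, $x$ solves $\HVI(\opF,g)$.
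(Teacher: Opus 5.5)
Your proof is correct and complete. Each step does what it needs to: the supremum-of-convex-functions argument, the choice $y=x$, monotonicity for the forward direction, and the Minty-type segment argument with $y_t=x+t(u-x)$, where $\ball(x,\eps)\cap\scrC=\ball(x,\eps)\cap\dom(g)$ keeps $y_t$ in $\scrC$. The paper contains no proof of its own (it defers to \cite{dvurechensky2025extragradient}), and your argument is the standard route for this composite version of Nesterov's restricted gap lemma \cite{NesDual07}.
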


\subsection{Sharpness and error-bound property of hierarchical VI's}
Our geometric framework is phrased in terms of weak-sharpness of the lower-level solution set $\scrS_{2}$. Originally formulated for optimization problems in \cite{burke1993weak}, weak-sharpness in the context of variational inequalities has been defined in \cite{patriksson1993unified}. Subsequently, implications in terms of error bounds of primal and dual gap functions have been stated in \cite{marcotte1998weak,liu2016characterization}. The following definition of weak sharpness is from \cite{Huang:2018aa}. 

\begin{definition}
Let $\opF:\scrZ\to\scrZ$ be continuous and monotone over $\dom(g)\subset\scrZ$. Assume $\dom(g)$ is closed. The solution set $\scrS$ of $\HVI(\opF,g)$ is \emph{weakly sharp} if there exists $\tau>0$ such that 
\[
(\forall z^{\ast}\in\scrS):\quad \tau\ball(0,1)\subseteq\opF(z^{\ast})+\partial g(z^{\ast})+[\TC_{\dom(g)}(z^{\ast})\cap\NC_{\scrS}(z^{\ast})]^{\circ} 
\]
\end{definition}

Based on \cite{Huang:2018aa}, we can give the following characterization of weak sharpness in terms of an error bound involving the dual gap function of $\HVI(\opF,g)$. A proof can be found in \cite{dvurechensky2025extragradient}.
\begin{proposition}\label{prop:EB1}
Consider problem $\HVI(\opF,g)$ with $\dom(g)$ a closed, convex, and nonempty subset of $\scrZ$. If $\scrS=\zer(\opF+\partial g)$ is weakly sharp, then 
$$
(\forall z\in\dom(g)):\quad \Theta(z\vert\opF,g,\dom(g))\geq \tau\dist(z,\scrS).
$$
\end{proposition}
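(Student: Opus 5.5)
The plan is a direct argument that tests the gap function at a single well-chosen point: the projection of $z$ onto $\scrS$. We then read off the inequality from the weak-sharpness inclusion applied to the unit direction pointing from that projection towards $z$. No earlier complexity machinery is needed. Throughout, I use that $\scrS=\zer(\opF+\partial g)$ is nonempty, closed and convex, since $\opF+\partial g$ is maximally monotone. Hence the Euclidean projection $\Pi_{\scrS}$ is well defined. I also use $\scrS\subseteq\dom(\partial g)\subseteq\dom(g)$, so points of $\scrS$ are admissible test points in $\Theta(\cdot\vert\opF,g,\dom(g))$.

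\emph{Trivial case.} Fix $z\in\dom(g)$. If $z\in\scrS$, then $\dist(z,\scrS)=0$. Choosing $y=z$ in the supremum gives $\Theta(z\vert\opF,g,\dom(g))\geq H^{(\opF,g)}(z,z)=0$, so the claim holds.

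\emph{Main case: the direction $d$ lies in the relevant cone.} Suppose $z\notin\scrS$, and set $z^{\ast}=\Pi_{\scrS}(z)$ and $d=z-z^{\ast}$, so that $\norm{d}=\dist(z,\scrS)>0$.
\begin{itemize}
\item The projection characterization gives $\inner{d,s-z^{\ast}}\leq 0$ for all $s\in\scrS$, i.e.\ $d\in\NC_{\scrS}(z^{\ast})$.
\item Since $\dom(g)$ is convex and contains both $z$ and $z^{\ast}$, we have $z^{\ast}+t d\in\dom(g)$ for $t\in[0,1]$. Hence $d\in\TC_{\dom(g)}(z^{\ast})$.
\end{itemize}
Therefore $d\in\TC_{\dom(g)}(z^{\ast})\cap\NC_{\scrS}(z^{\ast})$.

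\emph{Applying weak sharpness.} Apply the inclusion in the definition of weak sharpness to $u=\tau d/\norm{d}\in\tau\ball(0,1)$. It yields $\xi\in\partial g(z^{\ast})$ and $w\in[\TC_{\dom(g)}(z^{\ast})\cap\NC_{\scrS}(z^{\ast})]^{\circ}$ with
\[
u=\opF(z^{\ast})+\xi+w .
\]
Pairing this identity with $d$ and using $\inner{w,d}\leq 0$ (the polar property), we get
\[
\tau\norm{d}=\inner{u,d}\leq\inner{\opF(z^{\ast}),z-z^{\ast}}+\inner{\xi,z-z^{\ast}}.
\]
The subgradient inequality gives $\inner{\xi,z-z^{\ast}}\leq g(z)-g(z^{\ast})$. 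Hence
\[
\tau\dist(z,\scrS)\leq H^{(\opF,g)}(z,z^{\ast})\leq\sup_{y\in\dom(g)}H^{(\opF,g)}(z,y)=\Theta(z\vert\opF,g,\dom(g)),
\]
where the last inequality uses $z^{\ast}\in\dom(g)$.

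\emph{Where care is needed.} The only delicate point is the membership of $d$ in the intersection cone $\TC_{\dom(g)}(z^{\ast})\cap\NC_{\scrS}(z^{\ast})$. This is exactly what makes the polar term $w$ harmless. It relies on two facts: convexity of $\dom(g)$, so that $d$ is a feasible direction, and the choice of $z^{\ast}$ as the metric projection, so that $d$ is normal to $\scrS$. Closedness of $\dom(g)$ is only needed to keep the tangent cone the standard closed one, so that it contains the feasible direction $d$. The rest of the argument is a one-line subgradient estimate.
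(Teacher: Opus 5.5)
Your argument is correct and is the standard proof of this error bound. It is the route behind the reference the paper defers to, \cite{dvurechensky2025extragradient} (following \cite{Huang:2018aa}): projecting gives $z-\Pi_{\scrS}(z)\in\TC_{\dom(g)}(z^{\ast})\cap\NC_{\scrS}(z^{\ast})$, the polar term drops out, and the subgradient inequality yields $\tau\dist(z,\scrS)\le H^{(\opF,g)}(z,z^{\ast})\le\Theta(z\vert\opF,g,\dom(g))$.

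There are three small points to tidy up. First, nonemptiness of $\scrS$ is an implicit assumption of the statement, not a consequence of maximal monotonicity; for $\scrS=\emptyset$ the inclusion holds vacuously and the bound fails. Second, if $\ball(0,1)$ is read as the open ball, apply the inclusion with $\tau'<\tau$ and let $\tau'\uparrow\tau$. Third, closedness of $\dom(g)$ is not actually needed, since tangent cones of convex sets are closed by definition.
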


This Proposition shows that weak sharpness implies that the gap function satisfies an error bound property \cite{pang1997error}. Motivated by this fact, we propose the following definition:
\begin{definition}[Weak Sharpness]
\label{def:WS}
Let $\scrS$ be the nonempty solution set of $\HVI(\opF,g)$. We say $\scrS$ is $(\kappa,\rho)$-weak sharp with $\kappa>0$ and $\rho\geq 1$ if
\begin{equation}\label{eq:WSDefAppendix}
(\forall z^{\ast}\in\scrS)(\forall z\in \dom(g)):\quad H^{(\opF,g)}(z,z^{\ast})\geq\kappa\rho^{-1}\dist(z,\scrS)^{\rho}.
\end{equation}
\end{definition}
\begin{remark}
     An important class of examples arises when $\rho=1$, notably in monotone linear complementarity problems in finite dimensions under nondegeneracy conditions \cite{burke1993weak,pang1997error}. In the case where the $\HVI(\opF,g)$ reduces to a convex optimization problem, weak-sharpness implies an Hölderian errror bound, an assumption already imposed by \cite{Cabot2005} in the context of hierarchical minimization. Specifically, let us assume that $\opF=0$ and $g\in\Gamma_{0}(\scrZ)$. Then $\scrS=\argmin g$, and acordingly, $H^{(0,g)}(z,z^{*})=g(z)-\min g$ for $z^{*}\in\scrS$. Hence, \eqref{eq:WSDef} implies 
    \[
    \frac{\kappa}{\rho}\dist(u,\argmin g)^{\rho}\leq g(u)-\min g\qquad \forall u\in\scrZ.
    \]
If $\rho=1$, this is the weak-sharpness condition of \cite{burke1993weak}. The case $\rho=2$ corresponds to the "quadratic growth" condition of \cite{drusvyatskiy2018error}.\close
\end{remark}

Weak sharpness is a useful condition for obtaining a priori bounds the two gap functions introduced in the main body of the paper. We now give a proof of Lemma \ref{lem:LB} announced in the main text, which we restate for the readers' convenience together with a proof.
\begin{lemma}
Consider problem \eqref{eq:P}. Let Assumption \ref{ass:standing} and \ref{ass:CQ} hold. Let $\scrU_1\subseteq\dom(g_{1})$ be a nonempty compact set with $\scrU_1\cap\scrS_{1}\neq\emptyset$. Then, there exists a constant $B_{\scrU_1}>0$ such that
\begin{equation}\label{eq:LB1Appendix}
\Theta_{\rm Opt}(z\vert\scrU_1\cap\scrS_1)\geq-B_{\scrU_1}\dist(x,\scrS_{2}), \quad\forall x\in\scrX.
\end{equation}
Suppose $\scrS_{2}$ is $(\kappa,\rho)$-weakly sharp. Then for all nonempty and compact subsets $\scrU_2\subseteq\dom(g_{2})$ with $\scrU_{2}\cap\scrS_{2}$ and all $x\in\dom(g_{2})$, we have 
\begin{equation}\label{eq:WSAppendix}
\dist(z,\scrS_{2})\leq\left[\frac{\rho}{\kappa}\Theta_{\rm Feas}(x\vert \scrU_2\cap \scrS_2)\right]^{1/\rho}.
\end{equation}
\end{lemma}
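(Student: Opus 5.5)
We prove the two inequalities separately. Both rest on the constraint qualification of Assumption~\ref{ass:CQ} and on the monotonicity of the bifunctions $H^{(\opF_i,g_i)}$.

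\emph{Lower bound \eqref{eq:LB1Appendix}.} Fix $x^{*}\in\scrU_1\cap\scrS_1$, which exists by hypothesis. Assumption~\ref{ass:CQ} gives $0\in\opF_1(x^{*})+\partial g_1(x^{*})+\NC_{\scrS_2}(x^{*})$, so there are $\xi^{*}\in\partial g_1(x^{*})$ and $p^{*}\in\NC_{\scrS_2}(x^{*})$ with $\opF_1(x^{*})+\xi^{*}=-p^{*}$. Since $x^{*}\in\scrU_1\cap\scrS_1$, the supremum is at least its value at $y=x^{*}$:
\[
\Theta_{\rm Opt}(x\vert\scrU_1\cap\scrS_1)\geq H^{(\opF_1,g_1)}(x,x^{*})=\inner{\opF_1(x^{*}),x-x^{*}}+g_1(x)-g_1(x^{*}).
\]
Applying the subgradient inequality $g_1(x)-g_1(x^{*})\geq\inner{\xi^{*},x-x^{*}}$ turns this into
\[
\Theta_{\rm Opt}(x\vert\scrU_1\cap\scrS_1)\geq\inner{-p^{*},x-x^{*}}.
\]
Now let $\bar x=P_{\scrS_2}(x)$. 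Because $p^{*}\in\NC_{\scrS_2}(x^{*})$ and $\bar x\in\scrS_2$, we have $\inner{p^{*},\bar x-x^{*}}\leq 0$. Hence
\[
\inner{-p^{*},x-x^{*}}=-\inner{p^{*},x-\bar x}-\inner{p^{*},\bar x-x^{*}}\geq-\norm{p^{*}}\dist(x,\scrS_2).
\]
Take $B_{\scrU_1}\eqdef\norm{p^{*}}+1>0$. If one wants the constant to be uniform, minimize $\norm{p^{*}}$ over the admissible pairs.

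The only delicate point is the case $x\notin\dom(g_1)$. There $g_1(x)=+\infty$, so the left-hand side is $+\infty$ and the inequality holds trivially. For $x\in\dom(g_1)$ the argument above applies as written. Note also that the statement writes $z$ on the left and $x$ on the right; these denote the same point.

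\emph{Upper bound \eqref{eq:WSAppendix}.} Let $x\in\dom(g_2)$ and pick any $z^{*}\in\scrU_2\cap\scrS_2$, which is nonempty by hypothesis. Weak sharpness (Definition~\ref{def:WS}) gives
\[
\kappa\rho^{-1}\dist(x,\scrS_2)^{\rho}\leq H^{(\opF_2,g_2)}(x,z^{*}).
\]
The right-hand side is bounded by $\sup_{y\in\scrU_2\cap\scrS_2}H^{(\opF_2,g_2)}(x,y)$, which is the feasibility gap restricted to $\scrU_2\cap\scrS_2$ (by definition it is also at most $\Theta_{\rm Feas}(x\vert\scrU_2)$). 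Rearranging and taking the $\rho$-th root, which is monotone on $[0,\infty)$, yields \eqref{eq:WSAppendix}.

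No real obstacle is expected; the step needing the most care is existence of the multiplier $p^{*}$, which is exactly what Assumption~\ref{ass:CQ} supplies.
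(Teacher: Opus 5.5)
Your proof is correct and follows the paper's argument essentially step for step. For the lower bound you use the CQ multiplier $p^{*}$, the subgradient inequality, projection onto $\scrS_2$, the normal-cone inequality and Cauchy--Schwarz; for the upper bound you evaluate the supremum at a point of $\scrU_2\cap\scrS_2$ and invoke weak sharpness. Your choice $B_{\scrU_1}=\norm{p^{*}}+1$ is a small improvement, since the paper's $B_{\scrU_1}=\norm{p^{*}}$ could be zero while the statement asks for $B_{\scrU_1}>0$.
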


\begin{proof}
Let $x^{*}\in\scrS_{1}=\zer(\opF_{1}+\partial g_{1}+\NC_{\scrS_{2}})\subset\scrS_{2}$. Then, there exists $p^{*}\in\NC_{\scrS_{2}}(x^{*})$ such that $-\opF_{1}(x^{*})-p^{*}\in\partial g_{1}(x^{*})$. By the convex sugradient inequality, this implies 
\begin{equation}\label{eq:b1}
\inner{\opF_{1}(z^{*}),x-x^{*}}+g_{1}(x)-g_{1}(x^{*})\geq \inner{-p^{*},x-x^{*}}\qquad\forall x\in\scrX.
\end{equation}
Take $x\in \scrX$ and let $\hat{x}=\Pi_{\scrS_{2}}(x)$ be the orthogonal projector of $x$ onto $\scrS_{2}$. Thus, $\inner{p^{*},\hat{x}-x^*}\leq 0$, resulting in
\begin{equation*}
\inner{p^{*},\hat{x}-x}\leq\inner{p^{*},x^{*}-x}, 
\end{equation*}
which implies when combined with \eqref{eq:b1} 
\begin{align*}
\inner{\opF_{1}(x^{*}),x-x^{*}}+&g_{1}(x)-g_{1}(x^{*})\geq \inner{p^{*},\hat{x}-x}\geq -\norm{p^{*}}\cdot\norm{\hat{x}-x}=-\norm{p^{*}}\dist(x,\scrS_{2}).
\end{align*}
Hence, for all compact $\scrU_1\subset\dom(g_{1})$ with $x^*\in\scrU_1\cap\scrS_{1}\neq\emptyset$, we conclude 
$\Theta_{\rm Opt}(x\vert\scrU_1\cap\scrS_1)\geq-B_{\scrU_1}\dist(x,\scrS_{2})$, where $B_{\scrU_1}=\norm{p^{*}}$. 

To show \eqref{eq:WSAppendix}, we can directly use Definition \ref{def:WS}, to conclude
\begin{align*}
\Theta_{\rm Feas}(x\vert\scrU_2\cap\scrS_2)=\sup_{x^{*}\in\scrU_2\cap\scrS_2}H^{(\opF_{2},g_{2})}(x,x^{*})\geq \frac{\kappa}{\rho}\dist(x,\scrS_{2})^{\rho}.
\end{align*}
 \end{proof}

%% file: Proofs_Euclid.tex
%

\subsection{Proof of Lemma \ref{lem:energy}}
By the prox-inequality, we have for all $x\in\dom(g_{1})\cap\dom(g_{2})$ 
\begin{align*}
\inner{y_{k+1}-z_{k},y_{k+1}-x}\leq\inner{\tau_{k}\opV_{k}(w_{k}),x-y_{k+1}}+\tau_{k}(G_{k}(x)-G_{k}(y_{k+1}),\\ 
\inner{x_{k+1}-z_{k},x_{k+1}-x}\leq\inner{\tau_{k}\opA_{k+1},x-x_{k+1}}+\tau_{k}(G_{k}(x)-G_{k}(x_{k+1}).
\end{align*}
Injecting $x=x_{k+1}$ into the first inequality, we have 
$$
\inner{y_{k+1}-z_{k},y_{k+1}-x_{k+1}}\leq\inner{\tau_{k}\opV_{k}(w_{k}),x_{k+1}-y_{k+1}}+\tau_{k}(G_{k}(x_{k+1})-G_{k}(y_{k+1}) 
$$
Using the Pythagorean three-point identity yields 
\begin{align*}
\norm{x_{k+1}-x}^{2}&=\norm{z_{k}-x}^{2}-\norm{x_{k+1}-z_{k}}^{2}+2\inner{x_{k+1}-z_{k},x_{k+1}-x}\\ 
&=\alpha\norm{x_{k}-x}^{2}+(1-\alpha)\norm{w_{k}-x}^{2}+\alpha(1-\alpha)\norm{w_{k}-x_{k}}^{2}\\
&-\norm{x_{k+1}-z_{k}}^{2}+2\inner{x_{k+1}-z_{k},x_{k+1}-x},
\end{align*}
and 
\begin{align*}
\norm{x_{k+1}-z_{k}}^{2}&=\norm{x_{k+1}-y_{k+1}}^{2}+\norm{y_{k+1}-z_{k}}^{2}+2\inner{x_{k+1}-y_{k+1},y_{k+1}-z_{k}}\\
&=\norm{x_{k+1}-y_{k+1}}^{2}+\alpha\norm{y_{k+1}-x_{k}}^{2}+(1-\alpha)\norm{y_{k+1}-w_{k}}^{2}\\
&+\alpha(1-\alpha)\norm{x_{k}-w_{k}}^{2}+2\inner{x_{k+1}-y_{k+1},y_{k+1}-z_{k}}
\end{align*}

From these expressions, we directly arrive at 
\begin{equation}\label{eq:firstenergy}
    \begin{aligned}
        \norm{x_{k+1}-x}^{2}&\leq \alpha\norm{x_{k}-x}^{2}+(1-\alpha)\norm{w_{k}-x}^{2}-\alpha\norm{y_{k+1}-x_{k}}^{2}-(1-\alpha)\norm{y_{k+1}-w_{k}}^{2}\\
&+2\tau_{k}\inner{\opV_{k}(w_{k}),x_{k+1}-y_{k+1}}+2\tau_{k}[G_{k}(x)-G_{k}(y_{k+1})]\\
&+2\tau_{k}\inner{\opA_{k+1},x-x_{k+1}}-\norm{x_{k+1}-y_{k+1}}^{2}.
    \end{aligned}
\end{equation}
By definition of the random operator $\opA_{k+1}$, we see 
\begin{align*}
\inner{\opV_{k}(w_{k}),x_{k+1}-y_{k+1}}+&\inner{\opA_{k+1},x-x_{k+1}}=\inner{\opA_{k+1},x-y_{k+1}}\\
&+\inner{\opV^{\xi_{k}}_{k}(w_{k})-\opV_{k}^{\xi_{k}}(y_{k+1}),x_{k+1}-y_{k+1}},
\end{align*}
which allows us to continue with previous thread as 
\begin{equation}\label{eq:Energy1}
\begin{split}
\norm{x_{k+1}-x}^{2}&\leq \alpha\norm{x_{k}-x}^{2}+(1-\alpha)\norm{w_{k}-x}^{2}-\alpha\norm{y_{k+1}-x_{k}}^{2}-(1-\alpha)\norm{y_{k+1}-w_{k}}^{2}\\
&+2\tau_{k}\inner{\opV^{\xi_{k}}_{k}(w_{k})-\opV^{\xi_{k}}_{k}(y_{k+1}),x_{k+1}-y_{k+1}}+2\tau_{k}[G_{k}(x)-G_{k}(y_{k+1})]\\
&+2\tau_{k}\inner{\opA_{k+1},x-y_{k+1}}-\norm{x_{k+1}-y_{k+1}}^{2}.
\end{split}
\end{equation}
By Fenchel-Young inequality, we obtain for every $\gamma>0$,
\begin{equation}\label{eq:FYV}
2\tau_{k}\inner{\opV^{\xi_{k}}_{k}(w_{k})-\opV^{\xi_{k}}_{k}(y_{k+1}),x_{k+1}-y_{k+1}}\leq\frac{\tau^{2}_{k}}{\gamma}\norm{\opV^{\xi_{k}}_{k}(w_{k})-\opV^{\xi_{k}}_{k}(y_{k+1})}^{2}+\gamma\norm{x_{k+1}-y_{k+1}}^{2}. 
\end{equation}
Applying the conditional expectations $\Ex[\cdot\vert\scrH_{k}]$ and Assumption \ref{ass:Oracle}, we obtain 
\begin{align*}
\Ex[\norm{x_{k+1}-x}^{2}\vert\scrH_{k}]&\leq  \alpha\norm{x_{k}-x}^{2}+(1-\alpha)\norm{w_{k}-x}^{2}-\alpha\norm{y_{k+1}-x_{k}}^{2}\\
&+\left(\frac{\scrL^{2}_{k}\tau^{2}_{k}}{\gamma}-(1-\alpha)\right)\norm{y_{k+1}-w_{k}}^{2}-(1-\gamma)\Ex[\norm{x_{k+1}-y_{k+1}}^{2}\vert\scrH_{k}]\\
&+2\tau_{k}\inner{\opV_{k}(y_{k+1}),x-y_{k+1}}+2\tau_{k}[G_{k}(x)-G_{k}(y_{k+1})].
\end{align*}
Next, using the definition of $w_{k+1}$ gives 
$$
\frac{1-\alpha}{\theta}\Ex[\norm{w_{k+1}-x}^{2}\vert\scrH_{k}]=(1-\alpha)\Ex[\norm{x_{k+1}-x}^{2}\vert\scrH_{k}]+(1-\alpha)(1/\theta-1)\norm{w_{k}-x}^{2}.
$$
Substituting this into the previous display, we continue
\begin{align*}
\frac{1-\alpha}{\theta}\Ex[\norm{w_{k+1}-x}^{2}\vert\scrH_{k}]&+\alpha\Ex[\norm{x_{k+1}-x}^{2}\vert\scrH_{k}]\leq \alpha\norm{x_{k}-x}^{2}+\frac{1-\alpha}{\theta}\norm{w_{k}-x}^{2}-\alpha\norm{y_{k+1}-x_{k}}^{2}\\ 
&+\left(\frac{\scrL^{2}_{k}\tau^{2}_{k}}{\gamma}-(1-\alpha)\right)\norm{y_{k+1}-w_{k}}^{2}-(1-\gamma)\Ex[\norm{x_{k+1}-y_{k+1}}^{2}\vert\scrH_{k}]\\
&+2\tau_{k}\inner{\opV_{k}(y_{k+1}),x-y_{k+1}}+2\tau_{k}[G_{k}(x)-G_{k}(y_{k+1})].
\end{align*}
Using the step size rule $\tau_{k}\leq \frac{\sqrt{1-\alpha}}{\scrL_{k}}\gamma$, we finally obtain the bound 
\begin{align*}
\frac{1-\alpha}{\theta}\Ex[\norm{w_{k+1}-x}^{2}\vert\scrH_{k}]&+\alpha\Ex[\norm{x_{k+1}-x}^{2}\vert\scrH_{k}]\leq \alpha\norm{x_{k}-x}^{2}+\frac{1-\alpha}{\theta}\norm{w_{k}-x}^{2}-\alpha\norm{y_{k+1}-x_{k}}^{2}\\ 
&-(1-\gamma)\left((1-\alpha)\norm{y_{k+1}-w_{k}}^{2}+\Ex[\norm{x_{k+1}-y_{k+1}}^{2}\vert\scrH_{k}]\right)\\
&+2\tau_{k}\inner{\opV_{k}(y_{k+1}),x-y_{k+1}}+2\tau_{k}[G_{k}(x)-G_{k}(y_{k+1})].
\end{align*}
In terms of the definitions in eqs. \eqref{eq:E} and \eqref{eq:Psi}, we finally arrive at eq. \eqref{eq:Energy2}.

\subsection{Proof of Lemma \ref{lem:boundsequence}}
 For $x^{*}\in\scrS_{1}\subset\scrS_{2}$ there exists $p^{*}\in\NC_{\scrS_{2}}(x^{*})$ with $ -\opF_1(x^\ast) - p^\ast \in  \partial g_1(x^\ast)$. Using the monotonicity of $ \opF_1$ and the convex subgradient inequality for $ g_1$, we obtain 
\begin{align*}
\inner{\opF_{1}(y_{k+1}),y_{k+1}-x^{*}}+&g_{1}(y_{k+1})-g_{1}(x^{*})+\inner{p^{*},y_{k+1}-x^{*}}\geq \\
&\inner{\opF_{1}(x^{*}),y_{k+1}-x^{*}}+g_{1}(y_{k+1})-g_{1}(x^{*})+\inner{p^{*},y_{k+1}-x^{*}}\geq 0,
\end{align*}
From \eqref{eq:Energy2}, it follows 
\begin{align*}
\Ex[\scrE_{k+1}(x^{*})\vert\scrH_{k}]+&2\tau_{k}\left(\inner{\opF_{2}(y_{k+1}),y_{k+1}-x^{*}}+g_{2}(y_{k+1})-g_{2}(x^{*})-\inner{\beta_{k}p^{*},y_{k+1}-x^{*}}\right)\\
&\leq \scrE_{k}(x^{*})-(1-\gamma)\left((1-\alpha)\norm{y_{k+1}-w_{k}}^{2}+\Ex[\norm{x_{k+1}-y_{k+1}}^{2}\vert\scrH_{k}]\right).
\end{align*}
We continue 
\begin{align*}
&-\left(\inner{\opF_{2}(y_{k+1}),y_{k+1}-x^{*}}+g_{2}(y_{k+1})-g_{2}(x^{*})-\inner{\beta_{k}p^{*},y_{k+1}-x^{*}}\right)\\ 
&=\inner{\beta_{k}p^{*},y_{k+1}}+\inner{\opF_{2}(y_{k+1}),x^{*}-y_{k+1}}+g_{2}(x^{*})-g_{2}(y_{k+1})-\inner{\beta_{k}p^{*},x^{*}}\\ 
&=\inner{\beta_{k}p^{*},y_{k+1}}+H^{(\opF_{2},g_{2})}(x^{*},y_{k+1})-\supp(\beta_{k}p^{*}\vert\scrS_{2})\\
&\leq \sup_{y\in\dom(g_{2})}\left(\inner{\beta_{k}p^{*},y}+H^{(\opF_{2},g_{2})}(x^{*},y)\right)-\supp(\beta_{k}p^{*}\vert\scrS_{2})\\
&=\varphi^{(\opF_{2},g_{2})}(x^{*},\beta_{k}p^{*})-\supp(\beta_{k}p^{*}\vert\scrS_{2})\\
&\leq\sup_{x\in\scrS_{2}}\varphi^{(\opF_{2},g_{2})}(x,\beta_{k}p^{*})-\supp(\beta_{k}p^{*}\vert\scrS_{2}).
\end{align*}
Hence, 
\begin{equation}\label{eq:energy3}
\begin{split}
\Ex[\scrE_{k+1}(x^{*})\vert\scrH_{k}]&\leq \scrE_{k}(x^{*})-(1-\gamma)\left((1-\alpha)\norm{y_{k+1}-w_{k}}^{2}+\Ex[\norm{x_{k+1}-y_{k+1}}^{2}\vert\scrH_{k}]\right)\\
&+2\tau_{k}\left(\sup_{x\in\scrS_{2}}\varphi^{(\opF_{2},g_{2})}(x,\beta_{k}p^{*})-\supp(\beta_{k}p^{*}\vert\scrS_{2})\right).
\end{split}
\end{equation}
To simplify the notation, we define
\begin{align*}
    D_{k}&\eqdef (1-\gamma)\left((1-\alpha)\norm{y_{k+1}-w_{k}}^{2}+\Ex\left[\norm{x_{k+1}-y_{k+1}}^{2}\vert\scrH_{k}\right)\right],\\
h_{k}&\eqdef \sup_{x\in\scrS_{2}}\varphi^{(\opF_{2},g_{2})}(x,\beta_{k}p^{*})-\supp(\beta_{k}p^{*}\vert\scrS_{2}),
\end{align*}
so that 
\begin{equation}\label{eq:Supermartingale}
    \Ex[\scrE_{k+1}(x^{*})\vert\scrH_{k}]\leq\scrE_{k}(x^{*})-D_{k}+2\tau_{k}h_{k}\qquad\Pr-\text{a.s.}
\end{equation}
Together with Assumption \ref{ass:ACcondition} the Robbins-Siegmund Lemma (Lemma \ref{lem:RS}) allows us to deduce that $(D_{k})_{k}\in \ell^{1}_{+}(\filterF)$. Hence, $\lim_{k\to\infty}\norm{y_{k+1}-w_{k}}=\lim_{k\to\infty}\norm{x_{k+1}-y_{k+1}}=0$.

Taking expectation in eq. \eqref{eq:Supermartingale}, and telescoping this expression, we immediately arrive at the estimate \eqref{eq:aux1}.

\subsection{Full proof of Theorem \ref{th:mainMonotone}} 
The lengthy and rather technical proof of Theorem \ref{th:mainMonotone} is organised in various steps. First, we establish the upper bound on the feasibility gap $\Theta_{\rm Feas}$. In the next step use insights obtained from that part in order to prove upper bounds on the optimality gap $\Theta_{\rm Opt}$. Since the latter gap function is a-priori signless, we need to establish an informative lower bound as well. This is the last step in the proof in order to deduce \eqref{eqthm:opt}. Exploiting then the geometric setting provided by weak sharpness of the lower level solution set, allows us to prove two-sided bounds on the optimality gap, which in the end allow us to sandwich this optimality measure. 

\subsubsection{Establishing the rates of the feasibility gap}
To obtain rates in terms of these restricted merit functions, we depart from \eqref{eq:Energy1}:
\begin{align*}
\norm{x_{k+1}-x}^{2}&\leq\alpha\norm{x_{k}-x}^{2}+(1-\alpha)\norm{w_{k}-x}^{2}-\alpha\norm{y_{k+1}-x_k}^{2}-(1-\alpha)\norm{y_{k+1}-w_{k}}^{2}\\ 
&-\norm{x_{k+1}-y_{k+1}}^{2} +2\tau_{k}(G_{k}(x)-G_{k}(y_{k+1}))+2\tau_{k}\inner{\opV_{k}(y_{k+1}),x-y_{k+1}}\\
&+2\tau_{k}\inner{\opA_{k+1}-\opV_{k}(y_{k+1}),x-y_{k+1}}+2\tau_{k}\inner{\opV^{\xi_{k}}_{k}(w_{k})-\opV^{\xi_{k}}_{k}(y_{k+1}),x_{k+1}-y_{k+1}}.
\end{align*}
Calling 
$$
M_{1}^{k}(x)\eqdef \inner{\opA_{k+1}-\opV_{k}(y_{k+1}),x-y_{k+1}},\; M_{2}^{k}\eqdef \inner{\opV_{k}^{\xi_{k}}(w_{k})-\opV_{k}^{\xi_{k}}(y_{k+1}),x_{k+1}-y_{k+1}},
$$
and set $\alpha=1-\theta$, we obtain 
\begin{align}
\norm{x_{k+1}-x}^{2}&\leq (1-\theta)\norm{x_{k}-x}^{2}+\theta\norm{w_{k}-x}^{2}-(1-\theta)\norm{y_{k+1}-x_k}^{2}-\theta\norm{y_{k+1}-w_{k}}^{2}\nonumber\\
&-\norm{x_{k+1}-y_{k+1}}^{2}-2\tau_{k}\Psi_{k}(x)+2\tau_{k}(M_{1}^{k}(x)+M_{2}^{k}).  \label{eq:important_recursion_for_SM}
\end{align}
Note that $\Ex[M_{1}^{k}(x)\vert\scrH_{k}]=0$ a.s.

With the specific choice $\alpha=1-\theta$, we obtain $\scrE_{k}(x)=(1-\theta)\norm{x_{k}-x}^{2}+\norm{w_{k}-x}^{2}$. Via some simple algebra we thus arrive at 
\begin{align*}
\scrE_{k+1}(x)+2\tau_{k}\Psi_{k}(x)&\leq \scrE_{k}(x)+\norm{w_{k+1}-x}^{2}-(1-\theta)\norm{w_{k}-x}^{2}-\theta\norm{x_{k+1}-x}^{2}\\
&-(1-\theta)\norm{y_{k+1}-x_k}^{2}+2\tau_{k}(M_{1}^{k}(x)+M_{2}^{k})-\norm{x_{k+1}-y_{k+1}}^{2}-\theta\norm{y_{k+1}-w_{k}}^{2}
\end{align*}
Calling
\begin{align}
R^{k}_{1}(x)&\eqdef \norm{w_{k+1}-x}^{2}-(1-\theta)\norm{w_{k}-x}^{2}-\theta\norm{x_{k+1}-x}^{2} \label{eq:R_1k_def}\\
&= 2\inner{\theta x_{k+1}+  (1-\theta)w_{k} - w_{k+1},x} - \theta \norm{x_{k+1}}^2 - (1-\theta) \norm{w_{k}}^2 + \norm{w_{k+1}}^2  \nonumber\\
R_{2}^{k}&\eqdef \norm{x_{k+1}-y_{k+1}}^{2}+\theta\norm{y_{k+1}-w_{k}}^{2}, \label{eq:R_2k_def}
\end{align}
the above simplifies to 
\begin{equation}\label{eq:rate1}
\scrE_{k+1}(x)+2\tau_{k}\Psi_{k}(x)\leq \scrE_{k}(x)+R_{1}^{k}(x)+2\tau_{k}(M_{1}^{k}(x)+M_{2}^{k})-R_{2}^{k}.
\end{equation}
Note that $\Ex[R_{1}^{k}(x)\vert\scrF_{k}]=0$, and the tower property gives $\Ex[R_{1}^{k}(x)\vert\scrH_{k}]=\Ex[\Ex(R_{1}^{k}(x)\vert\scrF_{k})\vert\scrH_{k}]=0$. 

We define the variation of a function $g\in\Gamma_{0}(\scrX)$ over bounded sets $\scrU_{1}\times\scrU_{2}\subset \dom(g)\times\dom(g)$, as
$$
\Var(g\vert\scrU_{1}\times\scrU_{2})=\sup_{(x,y)\in\scrU_{1}\times\scrU_{2}}\abs{g(x)-g(y)}.
$$
%
If Assumption \ref{ass:ACcondition}  holds, thanks to Lemma \ref{lem:boundsequence}, there exists a deterministic constant $C_{r}>0$ such that $x_{k},y_{k}\in\ball(x_{0},C_{r})$ for all $k\geq 1$, $\Pr$-a.s. If $\dom(g_{1})\cap\dom(g_{2})$ is compact, such a ball also exists by construction of the algorithm, since then the iterates are confined to stay in the compact set $\dom(g_{1})\cap\dom(g_{2})$ by construction. Hence, there  exists a measurable set $\Omega_{0}\subseteq\Omega$ with $\Pr(\Omega_{0})=1$ such that 
\begin{align*}
\inner{\opF_{1}(y_{k+1}(\omega)),x^{*}-y_{k+1}(\omega)}&+g_{1}(x^{*})-g_{1}(y_{k+1}(\omega))\\
&\leq \norm{\opF_{1}(y_{k+1}(\omega))}\cdot\norm{y_{k+1}(\omega)-x^{*}}+g_{1}(x^{*})-g_{1}(y_{k+1}(\omega)) \\
&\leq \norm{\opF_{1}(y_{k+1}(\omega))}\cdot\left(\norm{x^{*}}+\norm{x_{0}}+\norm{x_{0}-y_{k+1}(\omega)}\right)+g_{1}(x^{*})-g_{1}(y_{k+1}(\omega))\\
&\leq \left(\norm{\opF_{1}(x_{0})}+\norm{\opF_{1}(x_{0})-\opF_{1}(y_{k+1}(\omega))}\right)\cdot\left(\norm{y_{k+1}(\omega)-x_{0}}+\norm{x_{0}}+\norm{x^{*}}\right)\\
&+\abs{g_{1}(x^{*})-g_{1}(y_{k+1}(\omega))}\\
&\leq \left(\norm{\opF_{1}(x_{0})}+L_{\opF_{1}}\cdot\norm{x_{0}-y_{k+1}(\omega)}\right)\cdot\left(\norm{y_{k+1}(\omega)-x_{0}}+\norm{x_{0}}+\norm{x^{*}}\right)\\
&+\abs{g_{1}(x^{*})-g_{1}(y_{k+1}(\omega))}\\
&\leq \left(\norm{\opF_{1}(x_{0})}+L_{\opF_{1}}C_{r}\right)\cdot\left(C_{r}+\norm{x_{0}}+\max_{x\in\scrU_{2}}\norm{x}\right)+\scrD(\scrU_{2},C_{r}),
\end{align*}
where $\scrD(\scrU_{2},C_{r})\eqdef\Var(g_{1}\vert \scrU_{2}\times\scrB_{r})$, with $\scrB_{r}\eqdef\ball(x_{0},C_{r})\cap\dom(g_{1})\cap\dom(g_{2})$. Hence, for $x^{*}\in\scrU_{2}\subset\dom(g_{1})\cap\dom(g_{2})$ compact convex, there exists an almost surely bounded random variable $C_{\scrU_{2}}\in L^{1}_{\Pr}(\Omega;\R_{+})$ for which
\begin{equation}
\inner{\opF_{1}(y_{k+1}(\omega)),x^{*}-y_{k+1}(\omega)}+g_{1}(x^{*})-g_{1}(y_{k+1}(\omega))\leq C_{\scrU_{2}}(\omega)\quad \forall \omega\in\Omega_{0}.
\end{equation}
Using this bound, we can continue by noting that
\begin{align*}
\Psi_{k}(x,\omega)+\beta_{k}C_{\scrU_{2}}(\omega)&\geq \Psi_{k}(x,\omega)+\beta_{k}\left(\inner{\opF_{1}(y_{k+1}(\omega)),x-y_{k+1}(\omega)}+g_{1}(x)-g_{1}(y_{k+1}(\omega))\right) \\
&\geq \inner{\opF_{2}(y_{k+1}(\omega)),y_{k+1}(\omega)-x}+g_{2}(y_{k+1}(\omega))-g_{2}(x)\\
&\geq \inner{\opF_{2}(x),y_{k+1}(\omega)-x}+g_{2}(y_{k+1}(\omega))-g_{2}(x). 
\end{align*}
Therefore, eq. \eqref{eq:rate1} delivers 
\begin{align*}
2\tau_{k}\left(\inner{\opF_{2}(x),y_{k+1}(\omega)-x}+g_{2}(y_{k+1}(\omega))-g_{2}(x)\right)&\leq \scrE_{k}(x,\omega)-\scrE_{k+1}(x,\omega)+2\tau_{k}\beta_{k}C_{\scrU_{2}}(\omega)-R_{2}^{k}(\omega)\\
&+R_{1}^{k}(x,\omega)+2\tau_{k}(M_{1}^{k}(x,\omega)+M_{2}^{k}(\omega)). 
\end{align*}
Define the ergodic averages
$$
\bar{y}_{K}=\frac{\sum_{k=0}^{K-1}\tau_{k}y_{k+1}}{T_{k}},\; T_{K}\eqdef \sum_{k=0}^{K-1}\tau_{k}. 
$$
Summing from $k=0,\ldots,K-1$, we arrive at 
\begin{align*}
2T_{K}&\left(\inner{\opF_{2}(x),\bar{y}_{K}(\omega)-x}+g_{2}(\bar{y}_{K}(\omega))-g_{2}(x)\right)\leq \scrE_{0}(x,\omega)+2\sum_{k=0}^{K-1}\tau_{k}\beta_{k}C_{\scrU_{2}}(\omega)\\
&+\sum_{k=0}^{K-1}(R^{k}_{1}(x,\omega)+2\tau_{k}M_{1}^{k}(x,\omega))+2\sum_{k=0}^{K-1}\tau_{k}M_{2}^{k}(\omega))-\sum_{k=0}^{K-1}R_{2}^{k}(\omega).
\end{align*}
Hence, dividing both sides by $T_{K}$, taking the maximum over all points $x\in\scrU_{2}$ and then applying the expectation operator on both sides, we are left with the bound
\begin{align*}
\Ex[\Theta_{\rm Feas}(\bar{y}_{K}\vert\scrU_{2})]&\leq \frac{1}{2T_{K}}\max_{x\in\scrU_{2}}\scrE_{0}(x)+\Ex[C_{\scrU_{2}}]\frac{\sum_{k=0}^{K-1}\tau_{k}\beta_{k}}{T_{K}}\\
&+\frac{1}{2T_{K}}\Ex\left[\max_{x\in\scrU_{2}}\sum_{k=0}^{K-1}R_{1}^{k}(x)\right]+\frac{1}{2T_{K}}\Ex\left[\max_{x\in\scrU_{2}}\sum_{k=0}^{K-1}2\tau_{k}M_{1}^{k}(x)\right]\\
&+\frac{1}{2T_{k}}\Ex\left[\sum_{k=0}^{K-1}(2\tau_{k}M_{2}^{k}-R_{2}^{k})\right]. 
\end{align*}
We estimate the last term by using \eqref{eq:LipV} and \eqref{eq:FYV} to obtain
\begin{equation}\label{eq:M2R2bound}
    \begin{aligned}
        \Ex[2\tau_{k}M_{2}^{k}-R_{2}^{k}\vert\scrH_{k}]&\leq \frac{\tau^{2}_{k}\scrL^{2}_{k}}{\gamma}\norm{y_{k+1}-w_k}^{2}+\gamma\Ex[\norm{x_{k+1}-y_{k+1}}^{2}\vert\scrH_{k}]\\
&-\Ex[\norm{x_{k+1}-y_{k+1}}^{2}\vert\scrH_{k}]-\theta\norm{y_{k+1}-w_{k}}^{2}\\
&\leq -\theta(1-\gamma)\norm{y_{k+1}-w_{k}}^{2}-(1-\gamma)\Ex[\norm{x_{k+1}-y_{k+1}}^{2}\vert\scrH_{k}]\\
&\leq 0. 
    \end{aligned}
\end{equation}

Hence, we can drop the last term, simplifying the penultimate display to 

\begin{equation}\label{eq:boundwithR1M1}
    \begin{aligned}
        \Ex[\Theta_{\rm Feas}(\bar{y}_{K}\vert\scrU_{2})]&\leq \max_{x\in\scrU_{2}} \frac{1}{2T_{K}}\scrE_{0}(x)+\Ex[C_{\scrU_{2}}]\frac{\sum_{k=0}^{K-1}\tau_{k}\beta_{k}}{T_{K}}\\
&+\frac{1}{2T_{K}}\Ex\left[\max_{x\in\scrU_{2}}\sum_{k=0}^{K-1}R_{1}^{k}(x)\right]+\frac{1}{2T_{K}}\Ex\left[\max_{x\in\scrU_{2}}\sum_{k=0}^{K-1}2\tau_{k}M_{1}^{k}(x)\right].
    \end{aligned}
\end{equation}

For the last term in \eqref{eq:boundwithR1M1}, we use Lemma \ref{lem:nemirovski} with $Z_{k+1} = 2\tau_k(A_{k+1} - \opV_{k}(y_{k+1}))$ and filtration $\scrH_k$. Then we have 
   \begin{align}
        \Ex\left[\max_{x\in\scrU_{2}}\sum_{k=0}^{K-1}2\tau_{k}M_{1}^{k}(x)\right] & =   \Ex\left[\max_{x\in\scrU_{2}}\sum_{k=0}^{K-1} \inner{Z_{k+1},x}\right] -  \Ex\left[\sum_{k=0}^{K-1} \inner{Z_{k+1},y_{k+1}}\right] \nonumber\\
        &=\Ex\left[\max_{x\in\scrU_{2}}\sum_{k=0}^{K-1} \inner{Z_{k+1},x}\right]\nonumber\\
        &\leq\max_{x\in\scrU_{2}}\frac{1}{2}\norm{x_{0}-x}^{2}+\frac{1}{2}\sum_{k=0}^{K-1}\Ex[\norm{Z_{k+1}}^{2}]\nonumber\\
        &\leq \max_{x\in\scrU_{2}}\frac{1}{2}\norm{x_{0}-x}^{2}+2\sum_{k=0}^{K-1} \tau_k^2 \scrL_k^2 \Ex\left[\norm{y_{k+1}-w_{k}}^2\right]. \label{eq:M1bound}
   \end{align}
where we have used in the second equality that $ y_{k+1} $ is $ \scrH_k$-measurable and $ \Ex[Z_{k+1} \vert \scrH_k] = 0$. For the last inequality we have used the fact that $\Ex[\norm{X-\Ex(X)}^{2}]\leq\Ex[\norm{X}^{2}]$, implying 
\begin{align*}
         \Ex\left[\norm{Z_{k+1}}^{2}\right] & =  \tau_k^2 \Ex\left[\norm{(\opV_k^{\xi_k}(y_{k+1})-\opV^{\xi_k}_k(w_{k})) - (\opV_k(y_{k+1})+ \opV_k(w_{k}))}^{2}\right]\\
         &\leq  \tau_k^2 \Ex\left[\norm{\opV_k^{\xi_k}(y_{k+1})-\opV^{\xi_k}_k(w_{k}) }^2\right]\\
         &\leq \tau_k^2 \scrL_k^2 \Ex \left[\norm{y_{k+1}-w_{k}}\right].
   \end{align*}
For the remaining term, we proceed in a similar way, to obtain  
\begin{equation}\label{eq:R1bound}
   \begin{aligned}
        \Ex\left[\max_{x\in\scrU_{2}}\sum_{k=0}^{K-1}R_{1}^{k}(x)\right] &= \Ex\left[\max_{x\in\scrU_{2}}\sum_{k=0}^{K-1} \left[\norm{w_{k+1}-x}^{2}-(1-\theta)\norm{w_{k}-x}^{2}-\theta\norm{x_{k+1}-x}^{2}  \right]\right]\\
        &= \Ex\left[\max_{x\in\scrU_{2}}\sum_{k=0}^{K-1} \left[ 2\inner{\theta x_{k+1}+  (1-\theta)w_{k} - w_{k+1},x} - \theta \norm{x_{k+1}}^2 - (1-\theta) \norm{w_{k}}^2 + \norm{w_{k+1}}^2 \right]\right]\\
        &=2\Ex\left[\max_{x\in\scrU_{2}}\sum_{k=0}^{K-1} \inner{\theta x_{k+1}+  (1-\theta)w_{k} - w_{k+1},x}  \right]\\
        &\leq \max_{x\in\scrU_2} \norm{x-x_0}^2 + \sum_{k=0}^{K-1} \Ex\left[\norm{\theta x_{k+1}+  (1-\theta)w_{k} - w_{k+1}}^2\right]\\
        &= \max_{x\in\scrU_2} \norm{x-x_0}^2 + \theta(1-\theta)\sum_{k=0}^{K-1} \Ex\left[\norm{x_{k+1}-w_{k}}^2\right].
   \end{aligned}
\end{equation}
The first equality uses the definition of the process $R_{1}^{k}(x)$. The second equality is derived from the definition of $w_{k+1}$. The first inequality is an application of Lemma \ref{lem:nemirovski}, with filtration $\scrH_{k}$ and process $Z_{k+1}=\theta x_{k+1}+  (1-\theta)w_{k} - w_{k+1}.$ The last equality is obtained from the following direct calculation
   \begin{align*}
        \Ex\left[\norm{\theta x_{k+1}+  (1-\theta)w_{k} - w_{k+1}}^2\right] &=\Ex[\norm{\Ex(w_{k+1}\vert\scrH_{k})-w_{k+1}}^{2}]\\  
        &= \Ex \left[\theta\norm{x_{k+1}}^2 + (1-\theta)\norm{w_{k}}^2\ - \norm{\theta x_{k+1}+(1-\theta)w_{k}}^2 \right]\\
        &=\theta(1-\theta)\Ex\left[ \norm{x_{k+1} - w_{k}}^2\right].
   \end{align*}
Plugging in \eqref{eq:M1bound} and \eqref{eq:R1bound} into \eqref{eq:boundwithR1M1}, we obtain
\begin{equation}
   \begin{aligned}
      \Ex[\Theta_{\rm Feas}(\bar{y}_{K}\vert\scrU_{2})]&\leq \frac{1}{2T_{K}}\max_{x \in \scrU_2} \scrE_{0}(x)+\Ex[C_{\scrU_{2}}]\frac{\sum_{k=0}^{K-1}\tau_{k}\beta_{k}}{T_{K}}\\
      &+\frac{1}{2T_{K}}\left(\max_{x\in\scrU_2} \norm{x-x_0}^2 + \theta(1-\theta)\sum_{k=0}^{K-1} \Ex [\norm{x_{k+1}-w_{k}}^2]\right)\\
      &\quad +\frac{1}{2T_{K}}\left(\frac{1}{2}\max_{x\in\scrU_{2}}\norm{x_{0}-x}^{2}+2\sum_{k=0}^{K-1} \tau_k^2 \scrL_k^2 \Ex \left[\norm{y_{k+1}-w_{k}}^2\right]\right).
   \end{aligned}
\end{equation}
To estimate the terms under the sum, we use the step size condition $\tau_{k}\scrL_{k}\leq \sqrt{\theta}\gamma\leq\sqrt{\theta}$, in order to arrive at
\begin{align*}
           & \Ex \left[\sum_{k=0}^{K-1} \left(\theta(1-\theta)\norm{x_{k+1}-w_{k}}^2+2\tau_k^2 \scrL_k^2 \norm{y_{k+1}-w_{k}}^{2}\right)\right]\\
       \leq & \Ex \left[\sum_{k=0}^{K-1} \left(\theta(1-\theta)\norm{x_{k+1}-w_{k}}^2+ 2\theta \norm{y_{k+1}-w_{k}}^{2}\right)\right]\\
       \leq & \theta\Ex \left[\sum_{k=0}^{K-1} \left(\norm{x_{k+1}-w_{k}}^2+ 2 \norm{y_{k+1}-w_{k}}^{2}\right)\right].
  \end{align*}
In combination with \eqref{eq:aux1}, we continue with the estimate
\begin{align*}
    \theta\sum_{k=0}^{K-1}&\Ex\left[\norm{x_{k+1}-w_{k}}^{2}+2\norm{y_{k+1}-w_{k}}^{2}\right]\leq \theta \sum_{k=0}^{K-1}\Ex\left[(2+\sqrt{2})\norm{x_{k+1}-y_{k+1}}^{2}+(2+\sqrt{2})\norm{y_{k+1}-w_{k}}^{2}\right] \\
    &\leq\frac{2+\sqrt{2}}{1-\gamma}\left(\scrE_{0}(x^{*})+2\sum_{k=0}^{\infty}\tau_{k}h_{k}\right) 
\end{align*}
where the inequality in the last line uses eq. \eqref{eq:aux1}. 
Moreover, $\scrE_{0}(x)=(2-\theta)\norm{x-x_{0}}^{2}\leq2\norm{x-x_{0}}^{2}.$ Combining this with the bounds established, this yields 
\begin{align*}
          \Ex[\Theta_{\rm Feas}(\bar{y}_{K}\vert\scrU_{2})]&\leq \frac{7}{4T_{K}}\max_{x\in\scrU_{2}}\norm{x-x^{0}}^{2}+\frac{7}{2T_{K}(1-\gamma)}\sum_{k=0}^{\infty}\tau_{k}h_{k}\\
          &+\Ex[C_{\scrU_{2}}]\frac{\sum_{k=0}^{K-1}\tau_{k}\beta_{k}}{T_{K}}+\frac{7}{2T_{K}(1-\gamma)}\norm{x^{*}-x_{0}}^{2}.
          %
\end{align*}
\paragraph{Establishing the rate on the optimality gap}
Let $\scrU_{1}\subset\dom(g_{1})\cap\dom(g_{2})$ be a compact set with $\scrU_{1}\cap\scrS_{1}\neq\emptyset.$ For a point $x\in\scrS_{2}$, the monotonicity of $\opF_{1}$ and $\opF_{2}$ gives
$$
\Psi_{k}(x)\geq\beta_{k}\left(\inner{\opF_{1}(x),y_{k+1}-x}+g_{1}(y_{k+1})-g_{1}(x)\right)=\beta_{k}H^{(\opF_{1},g_{1})}(y_{k+1},x).
$$
Starting from \eqref{eq:rate1}, which we divide by $\beta_k$, and using $\Psi_{k}(x)\geq\beta_{k} H^{(\opF_{1},g_{1})}(y_{k+1},x)$, we obtain
\begin{align}
    2\tau_{k}H^{(\opF_{1},g_{1})}(y_{k+1},x)\leq \frac{1}{\beta_{k}}\scrE_{k}(x)-\frac{1}{\beta_{k}}\scrE_{k+1}(x) + \frac{1}{\beta_k}(R_{1}^{k}(x)+2\tau_{k}(M_{1}^{k}(x)+M_{2}^{k})-R_{2}^{k}).
\end{align}
We sum over $ k=0,\dots ,K-1$ and divide by $T_K$ to obtain
\begin{equation}
   \begin{aligned}
        \frac{2}{T_K} &\sum_{k=0}^{K-1} \tau_k \left(\inner{F_1(x),y_{k+1}-x} + g_1(y_{k+1}) - g_1(x)\right) \leq \frac{1}{T_K \beta_1} \scrE_1(x) + \frac{1}{T_K}(\frac{1}{\beta_2}-\frac{1}{\beta_1})\scrE_2(x)\\
        &+\dots+ \frac{1}{T_K}(\frac{1}{\beta_K}-\frac{1}{\beta_{K-1}})\scrE_K(x)+ \sum_{k=0}^{K-1} \frac{1}{T_K\beta_k}(R_{1}^{k}(x)+2\tau_{k}(M_{1}^{k}(x)+M_{2}^{k})-R_{2}^{k}).
  \end{aligned}
\end{equation}
We know that $x_k,w_k$ are bounded for $\omega \in \Omega_{0}\subseteq\Omega$. Hence on $\Omega_{0}$, the sequence $(\scrE_{k}(x))_{k\in\N}$ is uniformly bounded by some positive constant $\bar{E}(x)$. Using this bound, $ \beta_{k+1} \leq \beta_k$ and the convexity of the LHS in the $y_{k+1}$ argument, we obtain
\begin{equation}\label{eq:optgapwitherror}
    \begin{aligned}
                 \inner{F_1(x),\bar{y}^{K}-x} + g_1(\bar{y}^{K}) - g_1(x) &\leq \frac{\bar{E}(x)}{2T_K \beta_K}+ \sum_{k=0}^{K-1} \frac{1}{2T_K\beta_k}(R_{1}^{k}(x)\\
                 &+2\tau_{k}(M_{1}^{k}(x)+M_{2}^{k})-R_{2}^{k}), \quad \forall x \in \scrS_2.
    \end{aligned}
\end{equation}
Hence, taking first the supremum over $x \in \scrU_1 \cap \scrS_2$ and then expectations on both sides of eq. \eqref{eq:optgapwitherror}, we arrive at 
\begin{equation}\label{eq:optboundwitherror}
    \begin{aligned}
        &\Ex \left[\Theta_{\rm Opt}(\bar{y}^{K}\vert\scrU_1\cap\scrS_{2})\right] \leq \frac{C_{\scrU_1}}{2T_K \beta_K}+\frac{1}{2T_{K}}\Ex\left[\max_{x\in\scrU_1 \cap \scrS_2}\sum_{k=0}^{K-1}\frac{1}{\beta_k}R_{1}^{k}(x)\right]\\
       & +\frac{1}{2T_{K}}\Ex\left[\max_{x\in\scrU_1 \cap \scrS_2}\sum_{k=0}^{K-1}2\frac{\tau_k}{\beta_k}M_{1}^{k}(x)\right] +\frac{1}{2T_{K}}\Ex\left[\sum_{k=0}^{K-1}\frac{1}{\beta_k}(2\tau_{k}M_{2}^{k}-R_{2}^{k})\right],
    \end{aligned}
\end{equation}
where $C_{\scrU_1} \eqdef \sup_{x\in\scrU_{1}\cap\scrS_{2}}\bar{E}(x)$.
We have to deal with the sums on the RHS in the same manner as above. From \eqref{eq:M2R2bound} we have $\Ex[2\tau_{k}M_{2}^{k}-R_{2}^{k}\vert\scrH_{k}] \leq 0$ and we can drop the last term. Furthermore for the $M_1$-term we use lemma \ref{lem:nemirovski} with $Z_{k+1} = 2\frac{\tau_k}{\beta_k}(A_{k+1} - V(y_{k+1}))$ and filtration $\scrH_k$. Then we have 
\begin{equation}\label{eq:optM1bound}
   \begin{aligned}
        \Ex\left[\max_{x\in\scrU_1 \cap \scrS_2}\sum_{k=0}^{K-1} 2\frac{\tau_k}{\beta_k}M_{1}^{k}(x)\right] & =   \Ex\left[\max_{x\in\scrU_1 \cap \scrS_2}\sum_{k=0}^{K-1} \inner{Z_{k+1},x}\right] -  \Ex\left[\sum_{k=0}^{K-1} \inner{Z_{k+1},y_{k+1}}\right]\\
        &=\Ex\left[\max_{x\in\scrU_1 \cap \scrS_2}\sum_{k=0}^{K-1} \inner{Z_{k+1},x}\right]\\
        &\leq \max_{x\in\scrU_1 \cap \scrS_2}\frac{1}{2}\norm{x_{0}-x}^{2}+\frac{2}{ \beta_K^2}\sum_{k=0}^{K-1} \tau_k^2 \scrL_k^2 \Ex\left[\norm{y_{k+1}-w_{k}}^2\right]. 
   \end{aligned}
\end{equation}
where we have used in the second equality that $ y_{k+1} $ is $ \scrH_k$-m.b. and $ \Ex[Z_{k+1} \vert \scrH_k] = 0$. For the last inequality we have used
\begin{equation}
   \begin{aligned}
         \Ex\left[\norm{Z_{k+1}}^{2}\right] & =  4\frac{\tau^2_k}{\beta_k^2} \Ex\left[\norm{(\opV_k^{\xi_k}(y_{k+1})-\opV^{\xi_k}_k(w_{k})) - (\opV_k(y_{k+1})- \opV_k(w_{k}))}^{2}\right]\\
         &= 4\frac{\tau^2_k}{\beta_k^2}  \Ex \left[\Ex\left[\norm{(\opV_k^{\xi_k}(y_{k+1})-\opV^{\xi_k}_k(w_{k})) - (\opV_k(y_{k+1})-\opV_k(w_{k}))}^{2}\vert \scrH_k\right]\right]\\
         &= 4\frac{\tau^2_k}{\beta_k^2}  \Ex\left[ \Ex\left[\norm{\opV_k^{\xi_k}(y_{k+1})-\opV^{\xi_k}_k(w_{k}) }^2\vert \scrH_k\right]\right]\\
         &\leq 4\frac{\tau^2_k}{\beta_k^2}  \scrL_k^2 \Ex \left[\norm{y_{k+1}-w_{k}}^2\right]\\
   \end{aligned}
\end{equation}
where we use the tower-property of the conditional variance $ \Ex \norm{X - \Ex X}^2 \leq \Ex \norm{X}^2$ and Assumption \ref{ass:Oracle} . Lastly, for the remaining term we have
\begin{equation}\label{eq:optR1bound}
   \begin{aligned}
        &\Ex\left[\max_{x\in\scrU_1 \cap \scrS_2}\sum_{k=0}^{K-1}\frac{1}{\beta_k}R_{1}^{k}(x)\right] = \Ex\left[\max_{x\in\scrU_1 \cap \scrS_2}\sum_{k=0}^{K-1} \frac{1}{\beta_k}\left[\norm{w_{k+1}-x}^{2}-(1-\theta)\norm{w_{k}-x}^{2}-\theta\norm{x_{k+1}-x}^{2}  \right]\right]\\
        &= \Ex\left[\max_{x\in\scrU_1 \cap \scrS_2}\sum_{k=0}^{K-1} \frac{1}{\beta_k}\left[ 2\inner{\theta x_{k+1}+  (1-\theta)w_{k} - w_{k+1},x} - \theta \norm{x_{k+1}}^2 - (1-\theta) \norm{w_{k}}^2 + \norm{w_{k+1}}^2 \right]\right]\\
        &=2\Ex\left[\max_{x\in\scrU_1 \cap \scrS_2}\sum_{k=0}^{K-1} \frac{1}{\beta_k}\inner{\theta x_{k+1}+  (1-\theta)w_{k} - w_{k+1},x}  \right]\\
        &\leq \max_{x\in\scrU_1 \cap \scrS_2} \norm{x-x^0}^2 + \sum_{k=0}^{K-1} \frac{1}{\beta_k^2} \Ex\left[\norm{\theta x_{k+1}+  (1-\theta)w_{k} - w_{k+1}}^2\right]\\
        &= \max_{x\in\scrU_1 \cap \scrS_2} \norm{x-x^0}^2 + \theta(1-\theta)\sum_{k=0}^{K-1} \frac{1}{\beta_k^2}\Ex\left[\norm{x_{k+1}-w_{k}}^2\right]\\
        &\leq \max_{x\in\scrU_1 \cap \scrS_2} \norm{x-x^0}^2 + \frac{\theta(1-\theta)}{\beta_K^2}\sum_{k=0}^{K-1} \Ex\left[\norm{x_{k+1}-w_{k}}^2\right]
   \end{aligned}
\end{equation}
where we use $ \Ex[\Ex[(1-\theta) \norm{w_{k}}^2 + \norm{w_{k+1}}^2 \vert \scrH_k]]=0$ and 
\begin{equation}
   \begin{aligned}
        \Ex \norm{\theta x_{k+1}+  (1-\theta)w_{k} - w_{k+1}}^2 &=\Ex \left[\Ex\left[ \norm{\Ex\left[w_{k+1}\vert \scrH_k\right] - w_{k+1}}^2\vert \scrH_k\right]\right]\\
        &= \Ex \left[\Ex\left[\norm{w_{k+1}}^2\vert \scrH_k\right] - \norm{\Ex\left[w_{k+1}\right]}^2\right]\\
        &= \Ex \left[\theta\norm{x_{k+1}}^2 + (1-\theta)\norm{w_{k}}^2\ - \norm{\theta x_{k+1}+(1-\theta)w_{k}}^2 \right]\\
        &=\theta(1-\theta)\Ex \norm{x_{k+1} - w_{k}}^2,
   \end{aligned}
\end{equation}
where we use $ \Ex \norm{X - \Ex X}^2 = \Ex \norm{X}^2 - \norm{\Ex X}^2.$ Plugging in \eqref{eq:optM1bound} and \eqref{eq:optR1bound} into \eqref{eq:optboundwitherror} gives us
\begin{equation}
    \begin{aligned}
         \Ex \left[\Theta_{\rm Opt}(\bar{y}^{K}\vert\scrU_1\cap\scrS_{2})\right] &\leq \frac{C_{\scrU_{1}}}{2T_K \beta_K}+\frac{1}{2T_{K}}\left( \max_{x\in\scrU_1 \cap \scrS_2} \norm{x-x^0}^2 + \frac{\theta(1-\theta)}{\beta_K^2}\sum_{k=0}^{K-1} \Ex\left[ \norm{x_{k+1}-w_{k}}^2\right]\right)\\
         &\quad +\frac{1}{2T_{K}}\left(\max_{x\in\scrU_1 \cap \scrS_2}\frac{1}{2}\norm{x_{0}-x}^{2}+\frac{2}{ \beta_K^2}\sum_{k=0}^{K-1} \tau_k^2 \scrL_k^2 \Ex\left[\norm{y_{k+1}-w_{k}}^2\right]\right).
    \end{aligned}
\end{equation}
Finally, using $2\tau_k^2 \scrL_k^2\leq \theta$, by eq. \eqref{eq:aux1} we have
\begin{align*}
     \Ex \left[\sum_{k=1}^{K-1} \left(\theta(1-\theta)\norm{x_{k+1}-w_{k}}^2+2\tau_k^2 \scrL_k^2 \norm{y_{k+1}-w_{k}}^{2}\right)\right] \leq \frac{3.5}{1-\gamma}\left(\scrE_{0}(x^{*})+2\sum_{k=0}^{\infty}\tau_{k}h_{k}\right).
\end{align*}
Together this yields 
\begin{align*}
    \Ex \left[\Theta_{\rm Opt}(\bar{y}^{K}\vert\scrU_1\cap\scrS_{2})\right] &\leq \frac{C_{\scrU_1}}{2T_K \beta_K}+\frac{3}{4T_{K}}\max_{x\in \scrU_1 \cap \scrS_2} \norm{x-x_0}^2+\frac{7}{2(1-\gamma)T_{K}\beta^{2}_{K}}\norm{x^{*}-x_{0}}^{2}\\
    &+\frac{7}{2(1-\gamma)T_{K}\beta^{2}_{K}}\sum_{k=0}^{\infty}\tau_{k}h_{k}.
\end{align*}
We obtain the lower bound in \eqref{eqthm:opt} immediately as a consequence of Lemma \ref{lem:LB}: 
\begin{align*}
   - B_{\scrU_1}\dist(\bar{y}^{K},\scrS_{2})\stackrel{\eqref{eq:LB1}}{\leq}\Theta_{\rm Opt}(\bar{y}^{K}\vert\scrU_{1}\cap\scrS_{1}) \leq \Theta_{\rm Opt}(\bar{y}^{K}\vert\scrU_{1}\cap\scrS_{2})
\end{align*}

\subsection{Proof of Proposition \ref{prop:ratesEucliddelta}}
This is essentially a straightforward computation, given the expressions derived in Theorem \ref{th:mainMonotone} and \eqref{eq:barh}. 

We first give the detailed derivation of \eqref{eq:barh}. \begin{align*}
    \sum_{k=0}^{K}h_{k}&\leq C_{\rho}\sum_{k= 0}^{K}a^{\rho^{*}}(k+b)^{-\delta\rho^{*}}=a^{\rho^{*}}b^{-\rho^{*}\delta}+a^{\rho^{*}}\sum_{k=1}^{K}(k+b)^{-\rho^{*}\delta}\\
    &\leq C_{\rho}\left[a^{\rho^{*}}b^{-\rho^{*}\delta}+a^{\rho^{*}}\int_{0}^{K}(t+b)^{-\delta\rho^{*}}\dif t\right]\\
&=C_{\rho}\left[a^{\rho^{*}}b^{-\rho^{*}\delta}+a^{\rho^{*}}\left(\frac{1}{1-\rho^{*}\delta}(K+b)^{1-\rho^{*}\delta}-\frac{1}{1-\rho^{*}\delta}b^{1-\delta\rho^{*}}\right)\right]
\end{align*}
If $\delta>\frac{1}{\rho^{*}}=1-1/\rho$, we can pass to the limit $K\to\infty$, to obtain 
\begin{equation}
\sum_{k=0}^{\infty}h_{k}\leq C_{\rho}\left[a^{\rho^{*}}b^{-\rho^{*}\delta}+\frac{a^{\rho^{*}}b^{1-\rho^{*}\delta}}{\delta\rho^{*}-1}\right]\eqdef\bar{h}_{\rho}.
\end{equation}

Combining this inequality with Theorem \ref{th:mainMonotone}, we obtain 
\begin{align*}
W_{\rm Feas}(K,x_{0},x^{*},\scrU_{2}) &\leq \frac{7}{4\bar{\tau}K}\max_{x\in\scrU_{2}}\norm{x-x_{0}}^{2}+\frac{7}{2K(1-\gamma)}\bar{h}_{\rho}\\
& + C_{\scrU_{2}} \frac{ab^{-\delta}+(K-1+b)^{1-\delta}}{K(1-\delta)}+\frac{7}{2\bar{\tau}K(1-\gamma)}\norm{x^{*}-x_{0}}^{2}\\
&\leq O(K^{-1})+O((K+b)^{-\delta}).
\end{align*}
In the same way, we see
\begin{align*}
W_{\rm Opt}(K,x_{0},x^{*},\scrU_{1})&\leq \frac{C_{\scrU_{1}}}{2a\bar{\tau}}K^{-(1-\delta)}+\frac{3}{4\bar{\tau}a^{2}}(K+b)^{-(1-2\delta)}\max_{x\in\scrS_{1}\cap\scrU_{1}}\norm{x-x_{0}}^{2}\\
&+\frac{7}{2\bar{\tau}(1-\gamma)a^{2}}(K+b)^{-(1-2\delta)}\norm{x^{*}-x_{0}}^{2}+\frac{7\bar{h}_{\rho}}{2(1-\gamma)a^{2}}(K+b)^{-(1-2\delta)}\\
&\leq O(K^{-(1-\delta)})+O((K+b)^{-(1-2\delta)}).
\end{align*}
Collecting the leading order terms yields the result.

%% file: Proofs_Breg.tex
%

In the proofs on the trajectory generated by Algorithm \ref{alg:MirrorProx}, we have to keep track of two indices. We thus define $\scrH_{s,k}\eqdef\sigma\left(y^{0}_{1},\ldots,y^{0}_{K},\ldots,y^{s}_{1},\ldots,y^{s}_{k+1}\right)$. 
\subsection{Energy estimates of the hierarchical mirror prox algorithm with variance reduction}

We start by applying the three point identity Lemma \ref{lem:3point} to the update steps \eqref{eq:mpupd1} with $ x = x_{k+1}^s$ 

\begin{equation}\label{eq:proxyupd}
   \begin{aligned}
        \tau_s &\left(G_s(x_{k+1}^s) - G_s (y^s_{k+1}) + \inner{\opV_{s}(w^s), x_{k+1}^s- y^s_{k+1}}\right) \\
        &\geq  D(x_{k+1}^s,y_{k+1}^s) + \alpha \left(D(y_{k+1}^s,x_k^s) - D(x_{k+1}^s,x_k^s)\right)+(1-\alpha) \left(D(y_{k+1}^s, \bar{w}^s) - D(x_{k+1}^s,\bar{w}^s)\right).
   \end{aligned}
\end{equation}
Similar for the update step \eqref{eq:mpupd2} with general $ x \in \scrX$:
\begin{equation}\label{eq:proxxupd}
   \begin{aligned}
        \tau_s\left(G_s(x) - G_s(x_{k+1}^s) + \inner{\opA_{k+1}^s, x_{k+1}^s - y_{k+1}^s}\right) &\geq  D(x,x_{k+1}^s) + \alpha \left(D(x_{k+1}^s,x_k^s) - D(x,x_k^s)\right)\\
        & \quad +(1-\alpha) \left(D(x_{k+1}^s, \bar{w}^s) - D(x,\bar{w}^s)\right).
   \end{aligned}
\end{equation}
Now we sum \eqref{eq:proxxupd} and \eqref{eq:proxyupd}.
\begin{equation}\label{eq:sumproxineq}
   \begin{aligned}
        \tau_s&\left(G_s(x) - G_s(y_{k+1}^s) + \inner{\opA_{k+1}^s, x - x_{k+1}^s} +\inner{\opV_s(w^s), x_{k+1}^s- y^s_{k+1}}\right) \\
        &\geq  D(x,x_{k+1}^s) + D(x_{k+1}^s,y_{k+1}^s)+\alpha \left(D(y_{k+1}^s,x_k^s) - D(x,x_k^s)\right)\\
        &+(1-\alpha) \left(D(y_{k+1}^s, \bar{w}^s) - D(x,\bar{w}^s)\right).
   \end{aligned}
\end{equation}
By the definition of $ \nabla \distance(\bar{w}^s)$, we have
\begin{equation}\label{eq:dualgrad}
   \begin{aligned}
        D(u,\bar{w}^s) - D(v,\bar{w}^s) &= 
        \distance(u) - \distance(v) + \inner{\nabla \distance (\bar{w}^s), v-u}\\
        &= \distance(u) - \distance(v) + \inner{\frac{1}{K}\sum_{j=1}^{K} \nabla \distance(x_j^{s-1}),v-u}\\
        &= \frac{1}{K}\sum_{j=1}^{K} \left[\distance(u) - \distance(v) - \inner{ \nabla \distance(x_j^{s-1}),v-u}\right]\\
        &= \frac{1}{K}\sum_{j=1}^{K} \left[D(u, x_j^{s-1}) - D(v,x_j^{s-1})\right].
   \end{aligned}
\end{equation}
Plugging \eqref{eq:dualgrad} into \eqref{eq:sumproxineq}, and adding a zero in the inner product terms, we obtain
\begin{align}\label{eq:Bregindermetiade}
            &\tau_s\left(G_s(x) - G_s(y_{k+1}^s) + \inner{\opA_{k+1}^s, x - y_{k+1}^s} +\inner{\opV_{s}^{\xi_k^s}(y_{k+1}^s)-V_s^{\xi_k^s}(w^s), y^s_{k+1}-x_{k+1}^s}\right) \\
        & \geq  D(x,x_{k+1}^s) + D(x_{k+1}^s,y_{k+1}^s)+\alpha \left(D(y_{k+1}^s,x_k^s) - D(x,x_k^s)\right)+\frac{1-\alpha}{K}\sum_{j=1}^{K} \left[D(y_{k+1}^s, x_j^{s-1}) - D(x,x_j^{s-1})\right].
   \end{align}
We now use the strong-convexity of the Bregman divergence and Jensen's inequality to estimate
\begin{align}
        &\frac{1-\alpha}{K}\sum_{j=1}^{K} D(y_{k+1}^s, x_j^{s-1}) \geq \frac{1-\alpha}{K}\sum_{j=1}^{K} \frac{1}{2} \norm{y_{k+1}^s-x_j^{s-1}}^2 \geq \frac{1-\alpha}{2} \norm{y_{k+1}^s - w^s}^2,\label{eq:Dstronglyconvex1}\\
        &D(x_{k+1}^s,y_{k+1}^s) \geq \frac{1}{2}\norm{x_{k+1}^s - y_{k+1}^s}^2.\label{eq:Dstronglyconvex2}
   \end{align}
Define $\scrE^{0}(x)\eqdef (\alpha+(1-\alpha)K)D(x,x_{0})$, and
\begin{align}
&\scrE^{s}(x)\eqdef\alpha D(x,x_0^{s})+(1-\alpha) \sum_{j=1}^{K}D(x,x_j^{s-1}) \label{eq:E},\\ 
&\Psi_{k}^s(x)\eqdef \inner{\opV_{s}(y^s_{k+1}),y^s_{k+1}-x}+G_{s}(y^s_{k+1})-G_{s}(x). \label{eq:Psi}
\end{align}
and
\begin{align}
    &M_{1}(x,s,k)\eqdef \inner{\opA^s_{k+1}-\opV_{s}(y^s_{k+1}),x-y_{k+1}^s},\label{eq:M1}\\
    &M_{2}(s,k)\eqdef \tau_{s}\inner{\opV_s^{\xi_k^s}(y_{k+1}^s)-\opV_s^{\xi_k^s}(w^s), y^s_{k+1}-x_{k+1}^s}\notag\\
    & -\frac{1}{2}\norm{x^{s}_{k+1}-y^{s}_{k+1}}^{2}-\frac{1-\alpha}{2}\norm{y^{s}_{k+1}-w^{s}}^{2}.\label{eq:M2}
\end{align}
We have $\Ex[M_{1}(x,s,k)\vert\scrH_{k,s}]=0$, and from Fenchel-Young 
\begin{align*}
M_{2}(s,k)&\leq \tau^{2}_{s}\norm{\opV^{\xi_{k}^{s}}_{s}(w^{s})-\opV^{\xi^{s}_{k}}_{s}(y^{s}_{k+1})}_{*}^2+\frac{1}{4}\norm{y^{s}_{k+1}-x^{s}_{k+1}}^{2} \\
& -\frac{1-\alpha}{2}\norm{y^{s}_{k+1}-w^{s}}^{2}-\frac{1}{2}\norm{y^{s}_{k+1}-x^{s}_{k+1}}^{2}. 
\end{align*}
Applying Assumption \ref{ass:Oracle_Bregman}, choosing $\tau_s$ s.t. $\tau_s^{2}L^{2}_{s}\leq (1-\alpha)/4$, and taking expectations on both sides yields thus 
\begin{equation}\label{eq:boundM2}
    \Ex[M_{2}(s,k)\vert\scrH_{s,k}]\leq -\frac{1}{4}\norm{y^{s}_{k+1}-x^{s}_{k+1}}^{2}-\frac{1-\alpha}{4}\norm{y^{s}_{k+1}-w^{s}}^{2}.
    \end{equation}

We continue our estimation, by plugging in \eqref{eq:Dstronglyconvex1} and \eqref{eq:Dstronglyconvex2} into \eqref{eq:Bregindermetiade} and use the definitions \eqref{eq:Psi} and \eqref{eq:M1}, \eqref{eq:M2} to obtain
\begin{equation}\label{eq:togetherbound}
   \begin{aligned}
        &\tau_s \Psi^s_k(x) \leq \tau_{s}\Psi^{s}_{k}(x)+ \alpha D(y_{k+1}^s,x_k^s)  \\
        & \leq \tau_{s} M_1(x,s,k)+ M_2(s,k)  + \alpha D(x,x_k^s) - D(x,x_{k+1}^s) +\frac{1-\alpha}{K} \sum_{j=1}^{K}D(x,x_j^{s-1})\\
        &=\tau_{s} M_1(x,s,k)+ M_2(s,k)  + \alpha (D(x,x_k^s) - D(x,x_{k+1}^s))\\
        &-(1-\alpha)D(x,x^{s}_{k+1})- \frac{1-\alpha}{K} \sum_{j=1}^{K}D(x,x_j^{s-1})\\
   \end{aligned}
\end{equation}
Summing over $k=0,1,\ldots,K-1$, and using the definitions \eqref{eq:E}, \eqref{eq:Psi}, we obtain
\begin{equation}
\label{eq:togetherbound_1}
      \sum_{k=0}^{K-1}\tau_s \Psi^s_k(x)+\scrE^{s+1}(x)\leq\scrE^{s}(x)+\sum_{k=0}^{K-1} \tau_s M_1(x,s,k) + \sum_{k=0}^{K-1}M_2(s,k). 
\end{equation}

We have 
\begin{align}
\Psi^{s}_{k}(x)&=\beta_{s}\left(\inner{\opF_{1}(y^{s}_{k+1}),y^{s}_{k+1}-x}+g_{1}(y^{s}_{k+1})-g_{1}(x)\right)\nonumber \\
&+\inner{\opF_{2}(y^{s}_{k+1}),y_{k+1}^{s}-x}+g_{2}(y^{s}_{k+1})-g_{2}(x)\nonumber  \\
&=-\beta_{s}H^{(\opF_{1},g_{1})}(x,y^{s}_{k+1})-H^{(\opF_{2},g_{2})}(x,y^{s}_{k+1}) \label{eq:psi_H}
\end{align}
Choosing $x=x^{*}\in\scrS_{1}$, there exist for $p^{*}\in\NC_{\scrS_{2}}(x^{*})$, for which 
$$
\sum_{k=0}^{K-1}\Psi^{s}_{k}(x^{*})\geq \beta_{s}K\inner{p^{*},x^{*}-\bar{y}^{s}}-\sum_{k=0}^{K-1}H^{(\opF_{2},g_{2})}(x^{*},y^{s}_{k+1}). 
$$
Hence, 
\begin{align*}
\scrE^{s+1}(x^{*})&
\leq\scrE^{s}(x^{*})+\tau_{s}\left(\sum_{k=0}^{K-1}H^{(\opF_{2},g_{2})}(x^{*},y^{s}_{k+1})+K\inner{\beta_{s}p^{*},\bar{y}^{s}-x^{*}}\right)+\sum_{k=0}^{K-1}\tau_s M_1(x,s,k) + \sum_{k=0}^{K-1}M_2(s,k)\\
&= \scrE^{s}(x^{*})+\tau_{s}\sum_{k=0}^{K-1}\left(\inner{\beta_{s}p^{*},y^{s}_{k+1}}+H^{(\opF_{2},g_{2})}(x^{*},y^{s}_{k+1})-\supp(\beta_{s}p^{*}\vert\scrS_{2})\right)+\sum_{k=0}^{K-1}\tau_s M_1(x,s,k) + \sum_{k=0}^{K-1}M_2(s,k)\\
&\leq \scrE^{s}(x^{*})+\tau_{s}\sum_{k=0}^{K-1}\left(\varphi^{(\opF_{2},g_{2})}(x^{*},\beta_{s}p^{*})-\supp(\beta_{s}p^{*}\vert\scrS_{2})\right)+\sum_{k=0}^{K-1}\tau_s M_1(x,s,k) + \sum_{k=0}^{K-1}M_2(s,k)\\
&\leq \scrE^{s}(x^{*})+\tau_{s}Kh(\beta_{s}p^{*})+\sum_{k=0}^{K-1}\tau_s M_1(x,s,k) + \sum_{k=0}^{K-1}M_2(s,k),
\end{align*}
where the last inequality uses the definition $h(u)\eqdef \sup_{x\in\scrS_{2}}\varphi^{(\opF_{2},g_{2})}(x,u)-\supp(u\vert\scrS_{2}).$ Combining with \eqref{eq:boundM2}, we thus yield the estimate 
$$
\Ex[\scrE^{s+1}(x^{*})]\leq\Ex[\scrE^{s}(x^{*})]+\tau_{s}K h(\beta_{s}p^{*})-\frac{1-\alpha}{4}\sum_{k=0}^{K-1}\Ex[\norm{y^{s}_{k+1}-w^{s}}^{2}]-\frac{1}{4}\sum_{k=0}^{K-1}\Ex[\norm{x^{s}_{k+1}-y^{s}_{k+1}}^{2}].
$$
We readily deduce from this Lyapunov inequality the statements made in Lemma \ref{lem:bounded_Bregman}.

\subsection{Complexity analysis of the hierarchical mirror prox with variance reduction algorithm}

Lemma \ref{lem:bounded_Bregman} shows that the sample paths generated by Algorithm \ref{alg:MirrorProx} are a.s. bounded. Thus, there exist a measurable set $\Omega_{0}\subseteq\Omega$ with $\Pr(\Omega_{0})=1$ such that all the iterates $y_k^s(\omega),x_k^s(\omega),w^s(\omega)$ are bounded for all $\omega \in \Omega_0$. In what follows we restrict ourselves to this set $\Omega_0$.

Combining \eqref{eq:togetherbound_1} and \eqref{eq:psi_H}, we obtain, for any $x \in \scrZ$,
\begin{equation}
\label{eq:Bregman_rates_proof_1}
   \begin{aligned}
      -\beta_{s}& \tau_s \sum_{k=0}^{K-1} H^{(\opF_{1},g_{1})}(x,y^{s}_{k+1})-\tau_s \sum_{k=0}^{K-1}H^{(\opF_{2},g_{2})}(x,y^{s}_{k+1}) 
      \leq\scrE^{s}(x)-\scrE^{s+1}(x)\\
      &+\sum_{k=0}^{K-1} \tau_s M_1(x,s,k) + \sum_{k=0}^{K-1}M_2(s,k). 
  \end{aligned}
\end{equation}
Since the sequence $y^{s}_{k+1}$ is a.s. bounded, we have that there exists a constant $C_x$ s.t. $H^{(\opF_{1},g_{1})}(x,y^{s}_{k+1}) \leq C_x$. 
By monotonicity of $\opF_2$, we have $-H^{(\opF_{2},g_{2})}(x,y^{s}_{k+1}) \geq H^{(\opF_{2},g_{2})}(y^{s}_{k+1},x)$. 
Combining this with the above inequality, we obtain
\begin{equation}
\label{eq:Bregman_rates_proof_2}
      \tau_s \sum_{k=1}^{K-1}H^{(\opF_{2},g_{2})}(y^{s}_{k+1},x) 
      \leq\scrE^{s}(x)-\scrE^{s+1}(x)+\beta_s\tau_s K C_x+\sum_{k=0}^{K-1} \tau_s M_1(x,s,k) + \sum_{k=0}^{K-1}M_2(s,k). 
\end{equation}
Summing these inequalities, defining $\hat{x}^S=\frac{1}{K T_S}\sum_{s=0}^{S-1}\tau_s\sum_{k=0}^{K-1}y^{s}_{k+1}$, where $T_S \eqdef \sum_{s=0}^{S-1}\tau_s$, and using the convexity of $g_2$, we obtain
\begin{equation}
\label{eq:Bregman_rates_proof_3}
      H^{(\opF_{2},g_{2})}(\hat{x}^S,x) 
      \leq\frac{\scrE^{0}(x)}{K T_S}+\frac{\sum_{s=0}^{S-1}\beta_s\tau_s K C_x}{K T_S}+ \frac{1}{K T_S}\sum_{s=0}^{S-1}\left(\sum_{k=0}^{K-1} \tau_s M_1(x,s,k) + \sum_{k=0}^{K-1}M_2(s,k)\right). 
\end{equation}
Since for any compact $\scrU_2$, $\sup_{x\in \scrU_2}C_x=C_{\scrU_2} < + \infty$, taking supremum in the above inequality and then expectation, we obtain
\begin{equation}
\label{eq:Bregman_rates_proof_4}
     \Ex[\Theta_{\rm Feas}(\hat{x}^S\vert\scrU_{2})] 
      \leq\frac{\sup_{x \in \scrU_2}\scrE^{0}(x)}{K T_S}+\frac{\sum_{s=0}^{S-1}\beta_s\tau_s  C_{\scrU_2}}{T_S}+ \frac{1}{K T_S}\Ex\left[\sup_{x \in \scrU_2}\sum_{s=0}^{S-1}\left(\sum_{k=0}^{K-1} \tau_s  M_1(x,s,k) \right)\right]. 
\end{equation}
The last term can be estimated in the same way as in the Euclidean setting. Specifically, we use Lemma \ref{lem:nemirovski_Bregman} with $Z_{k+1}^{s} = \tau_s(\opA^s_{k+1}-\opV_{\kappa}(y^s_{k+1}))$ together with the filtration $\scrH_{s,k}$. It then follows 
\begin{align}
    &\Ex \left[\sup_{x \in \scrU_2}\sum_{s=0}^{S-1}\sum_{k=0}^{K-1} \tau_s  M_1(x,s,k)\right] = \Ex\left[\sup_{x \in \scrU_2}\sum_{s=0}^{S-1}\sum_{k=0}^{K-1}  \tau_s\inner{\opA^s_{k+1}-\opV_{\kappa}(y^s_{k+1}),x-y_{k+1}^s}\right] \\
    &= \Ex\left[ \sup_{x \in \scrU_2}\sum_{s=0}^{S-1}\sum_{k=0}^{K-1}  \tau_s\inner{\opA^s_{k+1}-\opV_{\kappa}(y^s_{k+1}),x}\right] \\
    &\leq \sup_{x \in \scrU_2} D(x,x_0^0)+ \frac{1}{2}\sum_{s=0}^{S-1}\sum_{k=0}^{K-1}\Ex[\norm{Z_{k+1}^{s}}^{2}_{*}] \leq \sup_{x \in \scrU_2} D(x,x_0^0)+\frac{1}{2}\sum_{s=0}^{S-1}\sum_{k=0}^{K-1} \tau_s^2 L_s^2  \norm{y^{s}_{k+1}-w^{s}}^{2},
\end{align}
where we have used in the second equality that $ y_{k+1}^s $ is $ \scrH_{s,k}$-measurable and $ \Ex[Z_{k+1}^s \vert \scrH_{s,k}] = 0$. For the last inequality we have used the fact that $\Ex[\norm{X-\Ex(X)}^{2}]\leq\Ex[\norm{X}^{2}]$, implying 
\begin{align*}
         \Ex\left[\norm{Z_{k+1}^s}^{2}\right] & =  \tau_s^2 \Ex\left[\norm{(\opV_s^{\xi_k^s}(y_{k+1}^s)-\opV^{\xi_k^s}_s(w^s)) - (\opV_s(y_{k+1}^s)+ \opV_s(w^s))}^{2}\right]\\
         &\leq  \tau_s^2\Ex\left[\norm{\opV_s^{\xi_k^s}(y_{k+1}^s)-\opV^{\xi_k^s}_s(w^s)}^{2}\right]\\
         &\leq \tau_s^2 L_s^2  \Ex \left[\norm{y_{k+1}^s-w^s}^2\right].
\end{align*}
We further use the stepsize assumption $\tau_s^2L_s^2\leq (1-\alpha)/4$ and Lemma~\ref{lem:bounded_Bregman}\ref{eq:sum_iterates_Bregman} to obtain 
\begin{align*}
    &\Ex\left[\sup_{x \in \scrU_2}\sum_{s=0}^{S-1}\sum_{k=0}^{K-1} \tau_s  M_1(x,s,k)\right]  \leq \sup_{x \in \scrU_2} D(x,x_0^0)+\frac{1}{8} \left(4\scrE^{0}(x^{*})+4K\sum_{s=0}^{S-1}\tau_{s}h(\beta_{s}p^{*})\right)\\
    &=\sup_{x \in \scrU_2} D(x,x_0^0)+\frac{1}{2} \left(\scrE^{0}(x^{*})+K\sum_{s=0}^{S-1}\tau_{s}h(\beta_{s}p^{*})\right),
\end{align*}
which gives us the final bound
   \begin{align*}
     \Ex[\Theta_{\rm Feas}(\hat{x}^S\vert\scrU_{2})] 
      \leq&\frac{\sup_{x \in \scrU_2}\scrE^{0}(x)+\sup_{x \in \scrU_2}D(x,x_0^0)+\scrE^{0}(x^{*})/2}{K T_S}+\frac{\sum_{s=0}^{S-1}\beta_s\tau_s  C_{\scrU_2}}{T_S}\\
      &+ \frac{\sum_{s=0}^{S-1}\tau_{s}h(\beta_{s}p^{*})}{2 T_S}. 
   \end{align*}

To estimate the optimality gap, we divide \eqref{eq:Bregman_rates_proof_1} by $\beta_s$ and take $x=x_2^*\in \scrS_2$, which implies that $0\leq -H^{(\opF_{2},g_{2})}(x_2^*,y^{s}_{k+1})$ for any $k,s$. By monotonicity of $\opF_1$, we have $-H^{(\opF_{1},g_{1})}(x,y^{s}_{k+1}) \geq H^{(\opF_{1},g_{1})}(y^{s}_{k+1},x)$.  Thus, we get
\begin{equation}
\label{eq:Bregman_rates_proof_5}
      \tau_s \sum_{k=1}^{K-1} H^{(\opF_{1},g_{1})}(y^{s}_{k+1},x_2^*)
      \leq\frac{1}{\beta_s}\scrE^{s}(x_2^*)-\frac{1}{\beta_s}\scrE^{s+1}(x_2^*)+\frac{\tau_s}{\beta_s}\sum_{k=0}^{K-1}  M_1(x,s,k) + \frac{1}{\beta_s}\sum_{k=0}^{K-1}M_2(s,k). 
\end{equation}
For any fixed $x$, we have that $\scrE^{s}(x)$ is a.s. bounded by some constant $\tilde{C}_x$. Using this, summing these inequalities, and using the convexity of $g_2$, we obtain, for any $x_2^*\in \scrS_2$
\begin{equation}
\label{eq:Bregman_rates_proof_6}
      H^{(\opF_{1},g_{1})}(\hat{x}^s,x_2^*)
      \leq\frac{\tilde{C}_{x_2^*}}{\beta_SK T_S}+\frac{1}{K T_S}  \left(\sum_{s=0}^{S-1} \frac{\tau_s}{\beta_s}\sum_{k=0}^{K-1}  M_1(x,s,k) + \sum_{s=0}^{S-1}\frac{1}{\beta_s}\sum_{k=0}^{K-1}M_2(s,k)\right). 
\end{equation}
Since for any compact $\scrU_1$, $\sup_{x\in \scrU_1\cap \scrS_2}\tilde{C}_x=C_{\scrU_1} < + \infty$, taking supremum in the above inequality and then expectation, we obtain
\begin{equation}
\label{eq:Bregman_rates_proof_7}
     \Ex[\Theta_{\rm Opt}(\hat{x}^S\vert\scrU_{1}\cap \scrS_2)] 
      \leq\frac{C_{\scrU_1}}{\beta_SK T_S}+ \frac{1}{K T_S}\Ex\left[\sup_{x \in \scrU_{1}\cap \scrS_2}\sum_{s=0}^{S-1}\frac{\tau_s}{\beta_s}\left(\sum_{k=0}^{K-1}   M_1(x,s,k) \right)\right]. 
\end{equation}
The last term can be estimated in the same way as above, by using Lemma \ref{lem:nemirovski_Bregman} with $Z_{k+1}^{s} = \frac{\tau_s}{\beta_s}(\opA^s_{k+1}-\opV_{\kappa}(y^s_{k+1}))$ 
\begin{align*}
    &\Ex\left[\sup_{x \in \scrU_{1}\cap \scrS_2}\sum_{s=0}^{S-1}\sum_{k=0}^{K-1} \frac{\tau_s}{\beta_s}  M_1(x,s,k)\right] \leq \sup_{x \in \scrU_{1}\cap \scrS_2} D(x,x_0^0)+\frac{1}{2}\sum_{s=0}^{S-1}\sum_{k=0}^{K-1} \frac{\tau_s^2 L_s^2}{\beta_s^2}  \norm{y^{s}_{k+1}-w^{s}}^{2}\\
    &\leq \sup_{x \in \scrU_{1}\cap \scrS_2} D(x,x_0^0) + \frac{1}{2\beta_S^2} \left(\scrE^{0}(x^{*})+K\sum_{s=0}^{S-1}\tau_{s}h(\beta_{s}p^{*})\right),
\end{align*}
where we used that $\beta_s$ is decreasing, the stepsize assumption $\tau_s^2L_s^2\leq (1-\alpha)/4$ and Lemma~\ref{lem:bounded_Bregman}\ref{eq:sum_iterates_Bregman}. This gives us finally
\begin{equation}
\label{eq:Bregman_theta_opt}
   \begin{aligned}
     \Ex[\Theta_{\rm Opt}(\hat{x}^S\vert\scrU_{1}\cap \scrS_2)] 
      &\leq\frac{C_{\scrU_1}}{\beta_SK T_S} + \frac{\sup_{x \in \scrU_{1}\cap \scrS_2} D(x,x_0^0)}{K T_S}\\
      &+  \frac{\scrE^{0}(x^{*})}{2 K\beta_S^2 T_S}+\frac{\sum_{s=0}^{S-1}\tau_{s}h(\beta_{s}p^{*})}{2 \beta_S^2 T_S}. 
   \end{aligned}
\end{equation}

Let us choose $\tau_s=\bar{\tau}\leq \frac{\sqrt{1-\alpha}}{2L_{1}}$ and $\beta_s=\frac{1}{(K(s+1))^\delta}$ with $\delta \in (0,1/2)$. Then, we have (cf. the derivation after \eqref{eq:betapower}) $h(\beta_{s}p^{*})\leq \frac{\rho-1}{\rho}\alpha^{-\frac{1}{\rho-1}}\beta_{s}^{\frac{\rho}{\rho-1}}\norm{p^{*}}^{\frac{\rho}{\rho-1}}=C_{\rho} K^{-\delta \rho^{*}} (s+1)^{-\delta \rho^{*}}\leq C_{\rho} K^{-1} (s+1)^{-\delta \rho^{*}}$, where we used that $\delta \rho^{*}>1$. Hence, $\sum_{s=0}^{S-1}h(\beta_{s}p^{*}) \leq \tilde{h}_{\rho}/K$, where $\tilde{h}_{\rho}$ is obtained from $\bar{h}_{\rho}$ defined in \eqref{eq:barh} with the particular choice $a=1,b=1$. We also have $\sum_{s=0}^{S-1}\beta_s=\sum_{s=0}^{S-1}(Ks)^{-\delta}\leq \frac{K^{-\delta}}{1-\delta} \left(S^{1-\delta} + \delta\right)$.

Denoting 
$R^2(x^{*},x_0^0)\eqdef\sup_{x \in \scrU_2}\scrE^{0}(x)+\sup_{x \in \scrU_2}D(x,x_0^0)+\scrE^{0}(x^{*})$, we obtain from \eqref{eq:Bregman_theta_feas} the following bound on the feasibility gap
\begin{equation}
\begin{aligned}
\label{eq:Bregman_theta_feas_expl}
     &\Ex[\Theta_{\rm Feas}(\hat{x}^S\vert\scrU_{2})] 
      \leq\frac{R^2}{K T_S}+\frac{\sum_{s=0}^{S-1}\beta_s\tau_s  C_{\scrU_2}}{T_S}+ \frac{\sum_{s=0}^{S-1}\tau_{s}h(\beta_{s}p^{*})}{2 T_S} \\
      &\leq \frac{R^2}{\bar{\tau}KS}+\frac{\frac{K^{-\delta}}{1-\delta} \left(S^{1-\delta}+ \delta\right)}{S}C_{\scrU_2}+\frac{\tilde{h}_{\rho}/K}{2S}\\
      &\leq \frac{R^2}{\bar{\tau}KS} + \frac{4C_{\scrU_2}}{(KS)^{\delta}} +\frac{\tilde{h}_{\rho}}{2KS} . 
\end{aligned}
\end{equation}
For the refined version of the optimality gap, we use \eqref{eq:Bregman_theta_opt}
\begin{align*}
    &\Ex[\Theta_{\rm Opt}(\hat{x}^S\vert\scrU_{1}\cap \scrS_2)] 
      \leq\frac{C_{\scrU_1}}{\beta_SK T_S} + \frac{\sup_{x \in \scrU_{1}\cap \scrS_2} D(x,x_0^0)}{K T_S}
      +  \frac{\scrE^{0}(x^{*})}{2 K\beta_S^2 T_S}+\frac{\sum_{s=0}^{S-1}\tau_{s}h(\beta_{s}p^{*})}{2 \beta_S^2 T_S}\\
      &\leq \frac{C_{\scrU_1}}{\bar{\tau}KS\cdot (KS)^{-\delta}}     
      + \frac{\sup_{x \in \scrU_{1}\cap \scrS_2} D(x,x_0^0)}{\bar{\tau}KS}+  \frac{\scrE^{0}(x^{*})}{2\bar{\tau} KS \cdot (KS)^{-2\delta}}     
      +\frac{\tilde{h}_{\rho}/K}{2 S \cdot (KS)^{-2\delta}}\\
      &= \frac{C_{\scrU_1}}{\bar{\tau}(KS)^{1-\delta}}     
      + \frac{\sup_{x \in \scrU_{1}\cap \scrS_2} D(x,x_0^0)}{\bar{\tau}KS}+  \frac{\scrE^{0}(x^{*})}{2\bar{\tau}  (KS)^{1-2\delta}}     
      +\frac{\tilde{h}_{\rho} }{2 (KS)^{1-2\delta}}.
\end{align*}

%% file: App_Numerics.tex
%

In this section, we will explore two numerical examples to illustrate our theoretical results.
In both examples, we use the a matrix game, with matrix $M\in\R^{n\times m}$
$$\max_{y\in \Delta_m}\min_{x\in \Delta_n}x^\top My,$$
where for any $l\in\Z$, $\Delta_l$ is the unit simplex in dimension $l$.
For some, predefined $\nu\in\Z$ we take
\begin{align}\label{eq:M_choice}
M = I_{\nu} \otimes U 
,\quad U = \begin{pmatrix}
    1&-1\\
    -1&1
\end{pmatrix},
\end{align}
where $\otimes$ denotes the Kronecker product, and $U$ is the matrix associated with classical 'matching pennies' game.
The set of optimal strategies is $x^\ast = \frac{1}{2}\mu \otimes (1,1)^T$, and  $y^\ast=\frac{1}{2}\eta \otimes (1,1)^T$
where $(\mu_s)_{s\in [\nu]}, (\eta_s)_{s\in[\nu]} \in \Delta_\nu$.
For $z=(x,y)$ the optimal solution of this matrix game is given by a solution to the variational inequality with data 
$\tilde{g}(z)=\indicator_{\Delta_n}(x)+ \indicator_{\Delta_m}(y)$ and
$$\tilde{F}(z)=\begin{bmatrix}
    My\\
    -M^Tx
\end{bmatrix}=\sum_{j\in[m]} \begin{bmatrix} 
M^jy_j\\
\boldsymbol{0}_n\\
\end{bmatrix}+\sum_{i\in[n]} \begin{bmatrix} 
\boldsymbol{0}_n\\
M_ix_i
\end{bmatrix}.$$
$M^j$ is the $j$th column of $M$, and $M_i$ is the $i$th row of $M$ as a column vector.

\subsection{Equilibrium Selection}\label{app:Num_Eq_Selection}

The equilibrium selection problem finds an equilibrium of a game, among all possible equilibrium points, which minimizes a certain objective $f(z)$. Considering the objective function $f(z)=\frac{1}{2}\norm{z}^2$, the equilibrium selection problem is given by
\begin{align}
\min_{z\in \R^{n+m}}& \norm{z}^2\label{eq:eq_selection}\\
\text{s.t. }& \begin{pmatrix}x\\y\end{pmatrix}\in \begin{pmatrix}\argmin_{\tilde{x}\in\Delta_n}\tilde{x}^\top M y\\
\argmax_{\tilde{y}\in\Delta_m}x^\top M \tilde{y}
\end{pmatrix}.\nonumber
\end{align}
This problem can be cast as a hierarchical VI of form~\eqref{eq:P} with data $g_1(z) = 0 ,\; g_2(z) = \indicator_{\Delta_n}(x) + \indicator_{\Delta_m}(y)$, and 
\begin{align*}
    F_1(z)=\sum_{j\in[m]} \begin{bmatrix} \boldsymbol{0}_n\\e_j^my_j\end{bmatrix}+\sum_{i\in[n]}\begin{bmatrix} e_i^nx_i\\\boldsymbol{0}_m\end{bmatrix}
,\quad F_2=\tilde{F},
\end{align*}
where $e^n_i$ is the $i$th vector in the standard basis of $\R^n$.
Therefore, at each iteration $k$ of the algorithms $G_k(z) = g_2(z)=\indicator_{\Delta^n}(x) + \indicator_{\Delta^m}(y)$.


Since both $\opF_1$ and $\opF_2$ can be written as a finite sum of $n+m$ components, we set the random variable $\xi$ to be the set of components chosen at each iteration. 
Specifically, $\xi=(i,j)$ with probability $Q((i,j))=c_ir_j$, where $(c_i)_{i\in [n]} \in\Delta_n$ and $(r_j)_{j\in[m]}\in \Delta_m$, and we sample the random operators
via so called "importance-sampling' as in \cite{alacaoglu2022stochastic}: 
\begin{align*}
    \opF_1^\xi(z) = \begin{pmatrix}
        \frac{1}{c_i} e_i^n x_i\\
        \frac{1}{r_j} e_j^m y_j
    \end{pmatrix},\ \opF_2^\xi(z) = \begin{pmatrix}
        \frac{1}{r_j} M^jy_j\\
        -\frac{1}{c_i} M_ix_i.
    \end{pmatrix},
\end{align*}
where $D_i$ and $D^j$ denote the $i$-th row respective $j$-th column of a matrix $D$. One evaluation of $V_k(z)=F_2(z)+\beta_kF_1(z)$ has therefore complexity $\mathcal{O}(2nm)$ and of $V_k^\xi(z)=F_2^{\xi}(z)+\beta_kF_1^{\xi}(z)$ is $\mathcal{O}(n+m)$.

In the following, we present the operators and definition of $Q$ for each of the algorithms, as well as the setup and results of the computational study for this example.   
\subsubsection{Setting for Algorithm~\ref{alg:Euclid}}
In each iteration of Algorithm~\ref{alg:Euclid}, we are required to compute 
\begin{align}\label{eq:prox_euclid}\prox_{G_k}(z) = \left(\Pi_{\Delta_n}(x), \Pi_{\Delta_m}(y)\right),\end{align} where $\Pi_{\scrC}$ denotes the euclidean orthogonal projection onto set $\scrC$. We assume that we can compute $\prox_{G_k}$ in $\tilde{\mathcal{O}}(n+m)$ operations and therefore ignore it in computing the overall complexity of the Algorithm. The total computational cost of one iteration of Algorithm \ref{alg:Euclid} is therefore on average $\mathcal{O}(2\theta nm + n+m)$.

We define the sampling probabilities to be
\begin{align*}
r_j = \frac{\norm{M^j}^2}{\norm{M}_{F}^2},\quad c_i = \frac{\norm{M_i^2}}{\norm{M}_{F}^2}.
\end{align*} We can obtain a Lipschitz constant $\scrL_k$ satisfying Assumption~\ref{ass:Oracle} via
\begin{align*}
\Ex_{\xi\sim Q} \left[\norm{V_k^\xi(z)}^2_2\right]  &= \Ex \norm{ \beta_k \begin{pmatrix}
        \frac{1}{c_i} e_i^n x_i\\
        \frac{1}{r_j} e_j^m y_j
    \end{pmatrix}+\begin{pmatrix}
        \frac{1}{r_j} M^jy_j\\
        -\frac{1}{c_i} M_ix_i
    \end{pmatrix} }_2^2 \leq 2 \beta_k^2\Ex \norm{ \begin{pmatrix}
        \frac{1}{c_i} e_i^n x_i\\
        \frac{1}{r_j} e_j^m y_j
    \end{pmatrix}  }_2^2 + 2\Ex \norm{ \begin{pmatrix}
        \frac{1}{r_j} M^jy_j\\
        -\frac{1}{c_i} M_ix_i
    \end{pmatrix} }_2^2  \\
    &= 2  \beta_k^2 \left(\underset{i \sim c}{\Ex} \norm{\frac{1}{c_i}e^n_ix_i}_2^2 + \underset{j \sim r}{\Ex} \norm{\frac{1}{r_j}e^m_jy_j}_2^2\right) + 2\norm{M}_F^2 \norm{z}_2^2 \\
    &=2  \beta_k^2\left( \sum_{i\in[n]} c_i \norm{\frac{1}{c_i}e^n_ix_i}_2^2 + \sum_{j\in[m]} \norm{\frac{1}{r_j}e^m_jy_j}_2^2\right) +2\norm{M}_F^2 \norm{z}_2^2\\
    &=2 \beta_k^2\left( \sum_{i\in[n]} \frac{1}{c_i}(x_i)^2 + \sum_{j\in[m]} \frac{1}{r_j}(y_j)^2 \right)+\norm{M}_F^2 \norm{z}_2^2\\
    &\leq2 \left( \beta_k^2 \max_{i,j} \left\{\frac{1}{c_i},\frac{1}{r_j}\right\}+\norm{M}_F^2 \right) \norm{z}_2^2\\
\end{align*}
where we have bounded $\Ex\norm{F_2^\xi(z)}_2^2 \leq \norm{M}_F^2 \norm{z}_2^2$ as in \cite{alacaoglu2022stochastic}. Thus, we can conclude,
by the linearity of $V_k^{\xi}$ and
 Jensen's inequality and 
\begin{align}\label{eq:Eqlb_const_compute_Lk}
   \Ex\left[\norm{V_k^{\xi}(u)-V_k^{\xi}(v)}\right]\leq  &\sqrt{\Ex\left[\norm{V_k^{\xi}(u)-V_k^{\xi}(v)}^2_2\right]}=\sqrt{\Ex\left[\norm{V_k^{\xi}(u-v)}^2_2\right]}\leq \scrL_k\norm{u-v}
   \end{align}
with
$$\scrL_k = \sqrt{2(\beta_k^2{\max_{i,j} \left\{\frac{1}{c_i},\frac{1}{r_j}\right\}}^2+\norm{M}_F^2)}.$$

\subsubsection{Setting for Algorithm~\ref{alg:MirrorProx}.}
For this setting we look at two possible geometries. 

\paragraph{$\ell_1$-norm.}
  For $z = (x,y) \in \R^{n+m}$,  the first geometry, is associated with the norm \begin{equation}\label{eq:l1_norm} \norm{z} = \sqrt{\norm{x}_1^2 + \norm{y}_1^2},\end{equation} with the corresponding dual norm $\norm{z^\ast}_\ast = \sqrt{\norm{x}_{\infty}^2 + \norm{y}_{\infty}^2}$. In this geometry, we use the distance generating function as the sum of negative entropy functions over $x$ and $y$, i.e., $$\distance(z)=\sum_{i=1}^{n+m} z_i\log(z_i)=\sum_{i\in[n]} x_i\log(x_i)+\sum_{j\in[m]} y_j\log(y_j),$$ which is strongly convex with respect to the defined norm over the simplex.  
Thus, at each outer iteration $s$ of Algorithm \ref{alg:MirrorProx}, and for some $u\in \R^{2v} ,\;w,v\in \Delta_n\times \Delta_m$ the following operator must be computed 
\begin{align}\label{eq:bregman_prox} T(u,v,w)&=\argmin\{\inner{u,z}+G_s(z)+\alpha D(z,v)+(1-\alpha)D(z,w)\}\\
&=\argmin_{z\in \Delta_n\times\Delta_m} \inner{u,z}+\alpha \sum_{i=1}^{n+m} z_i\log \frac{z_i}{v_i}+(1-\alpha) \sum_{i=1}^{n+m} z_i\log \frac{z_i}{w_i}.\nonumber
\end{align}
Thus, due to the separability of the operator in each index, for $z=T(u,v,w)$ we obtain
$$z_i=\tilde{z}_i/\pi_i,\;\tilde{z}_i=v_i^{\alpha}w_i^{1-\alpha}\exp(-u_i),\quad i \in [n+m],$$
where $\pi_i$ is a normalization constant with $\pi_1=\ldots=\pi_n=\sum_{i\in[n]}{\tilde{z}_i}$ and $\pi_{n+1}=\ldots=\pi_{n+m}=\sum_{i=n+1}^{n+m}\tilde{z_i}$ ensuring that the components $x$ and $y$ sum to one. 

 
For a matrix $D$, denote $\norm{D}_{\max} = \max_{i,j}|D_{ij}|$.
 Distribution $Q$ is now updated at  each iteration, and is denoted by $Q_{u,v}$ as it depends on vectors $u,v\in \R^{n+m}$. Specifically, given  $u = (u^x,u^y),\ v =(v^x,v^y)$, we compute.
\begin{equation}\label{eq:def_cr_l1}
c_i=\frac{\left|u_i^x-v^x_i\right|}{\norm{u^x - v^x}_1},\quad r_j = \frac{\left|u_j^y - v_j^y\right|}{\norm{u^y-v^y}_1},
\end{equation}
and $Q_{u,v}((i,j))=c_ir_j$.
In view of Assumption \ref{ass:Oracle_Bregman}, we can compute the Lipschitz-constant $L_s$ via
\begin{align*}
    \Ex_{\xi \sim Q_{u,v}} \left[\norm{\opV_s^{\xi}(u) - \opV_s^{\xi}(v)}^2_\ast\right] &\leq 2 \beta_s^2 \Ex_{\xi \sim Q{u,v}} \left[\norm{\opF_1^{\xi}(u) - \opF_1^{\xi}(v)}^2_\ast\right] + 2 \Ex_{\xi \sim Q_{u,v}}  \left[\norm{F_2^{\xi}(u) - F_2^{\xi}(v)}_\ast^2\right]\\
    &\leq 2\beta_s^2 \left(\underset{i \sim c}{\Ex} \left[\norm{\frac{1}{c_i}e^n_i(u^x_i - v^x_i)}_{\infty}^2\right] + \underset{j \sim r}{\Ex} \left[\norm{\frac{1}{r_j}e^m_j(u^y_j-v^y_j)}_{\infty}^2 \right) + 2 \norm{M}_{\max}^2 \norm{u-v}^2\right]\\
    &=2\beta_s^2 \left( \sum_{i\in[n]} \frac{1}{c_i}(u^x_i - v^x_i)^2 + \sum_j^{m} \frac{1}{r_j}(u^y_j-v^y_j)^2 \right)+ 2 \norm{M}_{\max}^2 \norm{u-v}^2\\
    &=2 \beta_s^2 \left( \sum_{i\in[n]} \norm{u^x-v^x}_1 \abs{u^x_i - v^x_i}+ \sum_j^{m} \norm{u^y-v^y}_1 \abs{u^y_i - v^y_i} \right)+ 2 \norm{M}_{\max}^2 \norm{u-v}^2\\
    &\leq {2}\beta_s^2 \norm{u-v}^2+ 2 \norm{M}_{\max}^2 \norm{u-v}^2\\
    &=({2}\beta_s^2+2\norm{M}_{\max}^2)\norm{u-v}^2,
\end{align*}
so $L_s=\sqrt{2(\beta_s^2+\norm{M}^2_{\max})}.$

\paragraph{$\ell_2$-norm.}
We also look at the standard Euclidean geometry, i.e., with $d(z)=\norm{z}_2$.
In this case, 
\begin{equation}\label{eq:alg_l2_prox} T(u,v,w)=\prox_{G_k}(\alpha v+(1-\alpha)w - u),\end{equation}
where the proximal operator is as in \eqref{eq:prox_euclid}. 
Moreover, similarly to the $\ell_1$ geometry we define $Q_{u,v}$ dependent on  vectors $u$ and $v$, but change the definition of $c$ and $r$ to be
\begin{equation}\label{eq:def_cr_l2}
c_i=\frac{\left(u_i^x-v^x_i\right)^2}{\norm{u^x - v^x}_1}_2^2,\quad r_j = \frac{\left(u_j^y - v_j^y\right)^2}{\norm{u^y-v^y}_2^2}.
\end{equation}
Thus, under this geometry we have
\begin{align*}
    \Ex_{\xi \sim Q_{u,v}} \left[\norm{\opV_s^{\xi}(u) - \opV_s^{\xi}(v)}^2_2\right] &\leq 2 \beta_s^2 \Ex_{\xi \sim Q{u,v}} \left[\norm{\opF_1^{\xi}(u) - \opF_1^{\xi}(v)}^2_2\right] + 2 \Ex_{\xi \sim Q_{u,v}}  \left[\norm{\opF_2^{\xi}(u) - \opF_2^{\xi}(v)}_2^2\right]\\
    &\leq 2\beta_s^2 \left(\underset{i \sim c}{\Ex} \left[\norm{\frac{1}{c_i}e^n_i(u^x_i - v^x_i)}_{2}^2\right] + \underset{j \sim r}{\Ex} \left[\norm{\frac{1}{r_j}e^m_j(u^y_j-v^y_j)}_2^2 \right]\right)\\
    &+ 2 \underset{i \sim c}{\Ex} \left[\norm{\frac{1}{c_i}M_i(u^x_i - v^x_i)}_{2}^2\right] + \underset{j \sim r}{\Ex} \left[\norm{\frac{1}{r_j}M^j(u^y_j-v^y_j)}_2^2\right]\\
    &=2\beta_s^2 \left( \sum_{i\in[n]} \frac{1}{c_i}(u^x_i - v^x_i)^2 + \sum_j^{m} \frac{1}{r_j}(u^y_j-v^y_j)^2 \right)\\ 
    &+ 2\left(\sum_{i\in[n]} \frac{1}{c_i}\norm{M_i}_2^2(u^x_i - v^x_i)^2 + \sum_j^{m} \frac{1}{r_j}\norm{M^j}_2^2(u^y_j-v^y_j)^2 \right)\\
    &=2  \left( \sum_{i\in[n]} \norm{u^x-v^y}_2^2(\norm{M_i}_2^2+\beta_s^2) + \sum_j^{m} \norm{u^y-v^y}_2^2(\norm{M^j}_2^2+\beta_s^2)\right)\\
    &\leq {2}(\beta_s^2+\norm{M}_F^2) \norm{u-v}^2,
\end{align*}
where the last inequality is due to the definition of $c$ and $r$.
Similarly to the previous setting, we have $L_s=\sqrt{2(\beta_s^2+\norm{M}_F^2)}$ satisfies Assumption~\ref{ass:Oracle_Bregman}.

\subsubsection{Experiment setup and results.}\label{sec:parameters_ex1}

Note that for our choice of matrix $M$ in \eqref{eq:M_choice}, the equilibrium selection problem~\eqref{eq:eq_selection}  has a unique solution given by 
$x^*=\frac{1}{n}{\bf 1}$,
and $y^*=\frac{1}{m}{\bf 1}$.
In our experiment, we set $\nu=100$ so $n=m=2l=200$.

We compare the deterministic EG algorithms,
Algorithm~\ref{alg:Euclid}, and Algorithm~\ref{alg:MirrorProx} with both geometries. 
We chose $\delta=0.1$ for all algorithms.
For all setting, as well as the deterministic EG we use $\delta=0.1$. In Algorithm~\ref{alg:Euclid} we use probability $\theta=0.1$, interpolation parameter $\alpha=1-\theta$,  regularization sequence $\beta_k \eqdef 1/(k+1)^{\delta}$ and step size sequence  $\tau_k \eqdef \sqrt{\theta}/(2\scrL_k)$ where $\scrL_k$ is as defined above. In both setting of Algorithm~\ref{alg:MirrorProx}, we use interpolation parameter  $\alpha=1-\theta$ and number of inner iterations $K=1/\theta$, where $\theta=0.1$, as well as regularization sequence $\beta_s \eqdef 1/(K(s+1))^{\delta}$ and step size sequence  $\tau_s \eqdef \sqrt{\theta}/(2L_s)$, where $L_s$ is as defined above.

We compare two measures, the first is the feasibility gap, given by
\begin{equation}\label{eq:feasibility_gap}\text{Feas Gap}(z)=\max_{j\in[m]} (M^\top x)_j-\min_{i\in[n]} (My)_i,\end{equation}
and the second is the distance from the unique optimal solution $z^*=(x^*,y^*)$ given by $\norm{z-z^*}^2_2.$

To compare between the methods, we take into account the number of operator evaluation needed by each method. We follow the comparison method outlined in \cite[Appendix E]{malitsky2020golden}, and define an epoch by the number of operation done to compute a full operator $\opV_k$. Thus, defining $c$ to be the ratio between the number of operations required to compute the stochastic operator $\opV_k^{\xi}$ and the number of operations required to compute the full operator $\opV_k$, we can compute $\text{epoch}_k$ the number of epochs used by iteration $k$ for each of the methods. Specifically, each deterministic EG iteration is equivalent to an epoch, each iteration of Algorithm~\ref{alg:Euclid} is equivalent to an $1+2c$ epochs if a full update is made and $2c$ epochs if a full update is not made, and each outer iteration in Algorithm~\ref{alg:MirrorProx} requires $2K+1$ epochs. Thus, we give the results of the algorithms as a function of the number of epochs. 

Figure~\ref{fig:eq_select_comparison} presents the performance measures with respect to the ergodic sequence $\bar{y}_k$  (equivilantly $\bar{y}_s$ in Algorithm~\ref{alg:MirrorProx}), given by the solid line. The figure also present the performance of outer iterate $w^s$, given by the dashed lines, where for Algorithm~\ref{alg:Euclid}, $w^s$ is the subset of $w_k$ for which a full update step is performed.

Observe that under this choice of parameters, the performance of Algorithm~\ref{alg:Euclid} and Algorithm~\ref{alg:MirrorProx} with $\ell_2$ geometry are similar, with a slight advantage to Algorithm~\ref{alg:MirrorProx}, both out performing the deterministic EG and Algorithm~\ref{alg:MirrorProx} with $\ell_1$ geometry. Algorithm~\ref{alg:MirrorProx} with $\ell_1$ geometry ergodic average shows inferior performance to that of the EG. Surprisingly, all stochastic methods exhibit almost linear convergence of ``last iterate'' ($w$). This results corresponds with the results reported by \cite[Appendix E]{malitsky2020golden} for the Bregman case.

\subsection{Linearly Constrained Equilibrium}\label{app:Num_Eq_lin_const}
This problem addresses finding an equilibrium, which additionally satisfies a set of linear constraints, that is, we are interesting in finding a vector $z=(x,y)$ such that \begin{equation}\label{eq:linearly_constrained}\begin{pmatrix}x\\y\end{pmatrix}\in \begin{pmatrix}\argmin_{\tilde{x}\in\Delta_n}\tilde{x}^\top M y\\
\argmax_{\tilde{y}\in\Delta_m}x^\top M \tilde{y}
\end{pmatrix}, \quad Bx+Cy=d,\end{equation}
for some given matrices $B\in \R^{\nu \times n},\ C \in \R^{\nu \times m}$ and vector $d\in \R^{\nu}$ and matrix $M$ as defined in \ref{eq:M_choice}.
The problem's linear constraints can therefore be cast as the following optimization problem
$$\min_{x\in \Delta_n, y\in \Delta_m} \frac{1}{2}\norm{Bx+Cy-d}^2. $$
Thus, using standard optimality conditions, problem~\eqref{eq:linearly_constrained} can be formulated as hierarchical VI of form \eqref{eq:P}, with $g_1=0$, $g_2=\tilde{g}$
$$F_1=\tilde{F},\quad F_2(z)=P(Bx+Cy-d)= P\left( \sum_{i\in[n]} B^ix_i+\sum_{j\in[m]} C^jy_j -d\right),\quad  P= \begin{pmatrix}B^\top\\
C^\top\end{pmatrix}.$$
This leads to $G_k$ and $\prox_{G_K}$ being as in the previous example.

Using the definition of random variable $\xi$ as in the last example, taking values as pairs $(i,j)$, and  distributed according to $Q$, where $Q((i,j))=c_ir_j$ where $c\in\Delta_n$ and $r\in \Delta_m$. Specifically, we define the random operators:
\begin{align*}
    \opF_1^\xi(z) = \begin{pmatrix}
        \frac{1}{r_j} M^jy_j\\
        -\frac{1}{c_i} M_ix_i
    \end{pmatrix},\;\opF_2^\xi(z) = 
    P( \frac{1}{c_i} B^ix_i+\frac{1}{r_j} C^jy_j -d)
\end{align*}
Thus, while computing $\opF_1$ and $\opF_2$ requires $O(2mn+2\nu(n+m))$ operations, computing $\opF_1^{\xi}$ and $\opF_2^{\xi}$ requires $O(2(n+m))$ operations (provided that matrices $PB$ and $PC$ are precomputed).
\subsubsection{Algorithm Setting}

For this example we can not use the same probability distribution as in the equilibrium-selection example, since the matrices $B$ and $C$ might have $0$-rows or $0$-columns. Thus, the iteration independent importance sampling in Algorithm~\ref{alg:Euclid} would not work in this case and so we only run Algorithm~\ref{alg:MirrorProx} with the same geometries, definition of operator $T(u,v,w)$, and iteration dependent sampling scheme as in the previous example.
Recalling that 
 $\opV^{\xi}_s=\opF^{\xi}_2+\beta_s \opF^{\xi}_1$, we are only left to compute $L_s$ satisfying Assumption~\ref{ass:Oracle_Bregman} for each of the geometries. 

\paragraph{$\ell_1$-norm.}
Recall that $Q_{u,v}((i,j))=c_ir_j$ where $c$ and $r$ are given by \eqref{eq:def_cr_l1}.
 Denoting $z\eqdef(x,y)=u-v$, we compute the Lipschitz constant $L_s$ satisfying Assumption~\ref{ass:Oracle_Bregman} via
\begin{align*}
    \Ex_{\xi\sim Q_{u,v}}\left[ \norm{V_s^{\xi}(u) - V_s^{\xi}(v)}^2_\ast\right] &\leq 3\beta_s^2 \Ex_{Q_{u,v}} \left[\norm{\begin{pmatrix}\frac{1}{r_j}M^jy_j\\
\frac{1}{c_i}M_ix_i\end{pmatrix}}^2_* \right] + 3\Ex_{Q_{u,v}}\left[\norm{\frac{1}{c_i}PB^ix_i}_*^2  \right]+3
\Ex_{Q_{u,v}}\left[\norm{\frac{1}{r_i}PC^jy_j}_*^2  \right]\\
    &=3\beta_s^2 \left( \sum_{i\in[n]} \frac{1}{c_i}x_i^2\norm{M_i}_{\max}^2 + \sum_j^{m} \frac{1}{r_j}y_j^2\norm{M^j}_{\max}^2\right)\\
    &\quad + 3\sum_{i\in[n]} \frac{1}{c_i}x_i^2\norm{PB^i}_{\max}^2+3\sum_{j\in[m]} \frac{1}{r_i}y_j^2\norm{PC^j}_{\max}^2 \\
    &\leq  3(\beta_s^2\norm{M}_{\max}^2+\max\{\norm{PB}_{\max},\norm{PC}_{\max}\}^2)\norm{z}^2
\end{align*}
where the last inequality follows from the definition of $r$ and $c$ and the norms.
Thus, 
we obtain $L_s=\sqrt{3(\max \left\{\norm{PB}^2_{\max},\norm{PC}^2_{\max}\right\}+\beta_s^2\norm{M}_{\max}^2)}$.
 
\paragraph{$\ell_2$-norm.}

In the definition of $Q_{u,v}$ the vectors $c$ and $r$ are defined as in \eqref{eq:def_cr_l2}.
Denoting $z=(x,y)=u-v$, we compute the Lipschitz constant satisfying Assumption~\ref{ass:Oracle_Bregman} via
\begin{align*}
    &\Ex_{Q_{u,v}}\left[\norm{V_k^{\xi}(u)-V_k^{\xi}(v)}^2_2\right]\leq 3\Ex_{Q_{u,v}}\left[\norm{\frac{1}{c_i}PB^ix_i}_2^2  \right]+3
\Ex_{Q_{u,v}}\left[\norm{\frac{1}{r_i}PC^jy_j}_2^2\right]+3\beta_s^2 {\Ex}_{Q_{u,v}}\left[\norm{\begin{pmatrix}\frac{\1}{r_j}M^jy_j\\
\frac{1}{c_i}M_ix_i\end{pmatrix}}^2_2 \right]  \\
& \leq 3\left(\sum_{i\in[n]}  (x_i)^2\frac{\norm{PB^i}^2_2}{c_i}+
\sum_{j\in[m]}  (y_j)^2\frac{\norm{PC^j}^2_2}{r_j}\right)+3\beta_s^2\left(
\sum_{i\in[n]} \frac{\norm{ M_i}^2_2(x_i)^2}{c_i}+\sum_{j\in[m]} \frac{\norm{M^j}^2_2(y_j)^2}{r_j}\right)\\
& \leq 3\left(\sum_{i\in[n]} \norm{u^x-v^x}_2^2\norm{PB^i}^2_2+
\sum_{j\in[m]}  \norm{u^y-v^y}_2^2\norm{PC^j}^2_2\right)+3\beta_s^2\left(
\sum_{i\in[n]} \norm{u^x-v^x}_2^2\norm{ M_i}^2_2+\sum_{j\in[m]} \norm{u^y-v^y}_2^2\norm{M^j}^2_2\right)\\
& \leq 3\left(\norm{u^x-v^x}_2^2\norm{PB}^2_F+
 \norm{u^y-v^y}_2^2\norm{PC}^2_F\right)+3\beta_s^2\left(
\norm{u^x-v^x}_2^2\norm{ M}^2_F+\norm{u^y-v^y}_2^2\norm{M}^2_F\right)\\
&\leq 3 \left( \max\left\{ \norm{PB}_F^2,\norm{PC}_F^2\right\} + \beta_s^2 \norm{M}_F^2\right)\norm{u-v}_2^2,
\end{align*}
and so $L_s =  \sqrt{3\left( \max\left\{ \norm{PB}_F^2,\norm{PC}_F^2\right\} + \beta_s^2 \norm{M}_F^2\right)}$.

\subsubsection{Experiment setup and results.}

For this experiment, we chose $\nu=100$ so $m=n=2l=200$ and data
$$
B,C =  I_l \otimes (1, 0) \in 
\R^{\nu \times n}, \text{ and } d = {\bf 1} \frac{1}{\nu} \in \R^\nu,
$$
so that for odd indices $i$, $x_i = \frac{1}{\nu} -y_i $.

We compare the deterministic EG algorithms
and Algorithm~\ref{alg:MirrorProx} with both geometries. We take $\delta=0.01$ for all algorithms.
All other parameters were selected as specified in \eqref{sec:parameters_ex1}.
For this experiment, we compare two measures: the Optimality gap, given by
\begin{equation}\label{eq:optimality_gap_Ex2}\text{Opt Gap}(z)=\max_{j\in[m]} (M^\top x)_j-\min_{i\in[n]} (My)_i,\end{equation}
and the feasibility gap given by
\begin{equation}\label{eq:optimality_gap_Ex2}\text{Feas Gap}(z)=\norm{Bx+Cy-d}^2_2.\end{equation}
To ensure a fair comparison comparison between the algorithms, we adopt the same epoch definition as in the previous experiment, and adapt the computation to the complexities of these example. Figure \ref{fig:minmax_all} reports the performance of EG and Algorithm \ref{alg:MirrorProx} for both geometries, similar to Figure~\ref{fig:eq_select_comparison}. 
As in the previous example, 
Algorithm \ref{alg:MirrorProx} with $\ell_2$ geometry has  superior performance to that of EG, while 
Algorithm \ref{alg:MirrorProx} with $\ell_1$ geometry performs worse than EG.
Additionally, we observe again the almost linear convergence of the last iterates. 
\begin{figure}[H]
    \centering
    \includegraphics[scale=0.45]{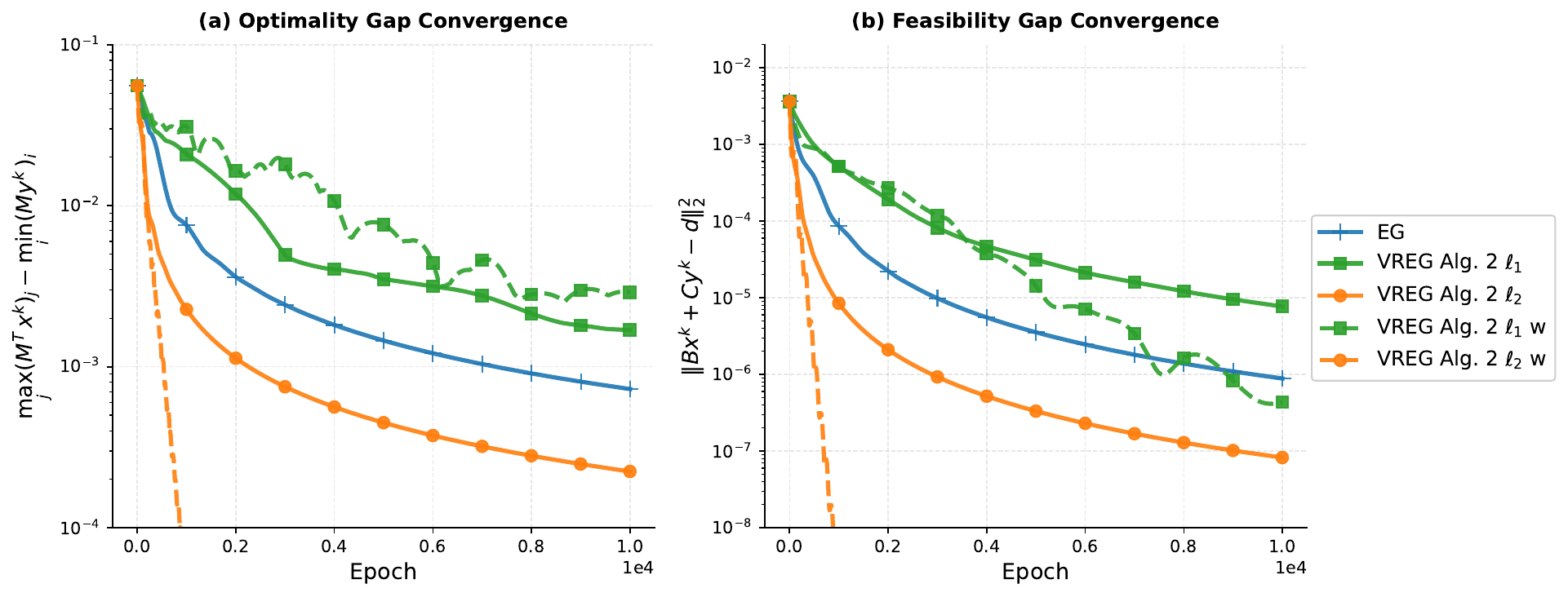}
    \caption{Ergodic average performance for $\delta=0.01$}
    \label{fig:minmax_all}
\end{figure}